\documentclass[11pt]{article}
\usepackage{amsmath,amssymb,amsthm,amscd}

\def \N {\mathbb{N}}
\def \Z {\mathbb{Z}}
\def \Q {\mathbb{Q}}
\def \a {\mathbf{a}}
\def \b {\mathbf{b}}
\def \c {\mathbf{c}}
\def \p {\mathbf{p}}
\def \q {\mathbf{q}}
\def \u {\mathbf{u}}
\def \v {\mathbf{v}}
\def \w {\mathbf{w}}
\def \x {\mathbf{x}}
\def \y {\mathbf{y}}
\def \C {\mathbf{C}}
\def \P {\mathbf{P}}
\def \L {\mathcal{L}}

\def \S {\mathcal{S}}
\def \CF {{\cal CF}}
\def \vp {\bar{v}_p}
\def \BS {\operatorname{BS}}

\newtheorem{theorem}{Theorem}[section]
\newtheorem{proposition}[theorem]{Proposition}
\newtheorem{lemma}[theorem]{Lemma}
\newtheorem{corollary}[theorem]{Corollary}
\theoremstyle{plain}\newtheorem{observation}[theorem]{Observation}
\newtheorem{conjecture}[theorem]{Conjecture}

\numberwithin{theorem}{section}

\parskip 2ex
\parindent 0pt

\title{Groups with poly-context-free word problem}

\author{Tara Brough}
\date{April 10, 2011}

%\keywords{Word problems, context-free languages, intersection, semilinear sets, soluble groups}

%\classification{20F10; 68Q45 (primary); 03D40 (secondary)}

%\researchsupported{The author was supported by a Vice-Chancellor's Scholarship from the 
%University of Warwick.}
%% If there is an additional footnote on page 1, place ``\makethankshere'' subsequent
%% to that footnote and use the class option ``nothanks''.

%\acknowledgments{I am immensely grateful to my Ph.D. supervisor, Derek Holt, for many
%helpful and inspiring discussions and suggestions.}

\begin{document}

\maketitle

\begin{abstract}
We consider the class of groups whose word problem is \emph{poly-context-free};
that is, an intersection of finitely many context-free languages.  

We show that any group which is virtually a finitely generated subgroup of a direct
product of free groups has poly-context-free word problem, and
conjecture that the converse also holds.  We prove our conjecture for several
classes of soluble groups, including metabelian groups and torsion-free soluble
groups, and present progress towards resolving the conjecture for soluble groups
in general.

Some of the techniques introduced for proving languages not to be poly-context-free may 
be of independent interest.
\end{abstract}

\section{Introduction}

The \emph{word problem} of a group $G$ with respect to a finite generating set $X$,
denoted $W(G,X)$, is 
the set of all words in elements of $X$ and their inverses which represent the 
identity element of $G$.  A \emph{(formal) language} is a set of words over some finite
alphabet, so $W(G,X)$ can be considered as a language. 

The study of word problems of groups as languages has developed slowly since 
the beginnings of language theory in the 1950s.  
In 1971, Anisimov \cite{Ani}
published a proof that a group has regular word problem if and only if it is finite. 
The first really significant development in the area was the classification of the groups 
with context-free word problem by Muller and Schupp in the 1980s \cite{MulSch1, MulSch2, Dun}:  
a finitely generated group has context-free word problem if and only if it is virtually free.
Since then, research activity in this area has increased, and groups with word problem in 
various other language classes, generally somewhat related to the context-free languages, 
have been studied, for example in \cite{Herbst, HRRT, HRees, HR, HR2, HOT, HRS, KO, LehSch, Shap}.
The general aim is to determine what implications the language type of a group's 
word problem has for the structure of the group and vice versa.

One natural class of languages to consider is the closure of the context-free languages 
under intersection.  
Some research has been done on this class (see for example \cite{LW}, \cite{Wot} and \cite{Gorun}), 
but it does not appear to have a consistent name.  We call a language \emph{$k$-context-free} 
(henceforth abbreviated to $k$-$\CF$)
if it is an intersection of finitely many context-free languages, and 
\emph{poly-context-free (poly-$\CF$)} if it is $k$-$\CF$ for some $k\in \N$.

This paper is concerned with the class of poly-$\CF$ groups.  A group is said to be 
\emph{poly-$\CF$} if its word problem is a poly-$\CF$ language.  
The property of being poly-$\CF$ is independent of the choice of finite generating set,
and the class of poly-$\CF$ groups is closed under taking finitely generated subgroups,
finite index overgroups, and finite direct products.  
All but the last of these properties are typical of classes of groups defined by the 
language type of their word problem.

A general classification of these groups appears to be hard.
However, we prove a result (Theorem~\ref{polycf-sol}) which comes close in the case of
soluble groups.

We conjecture that the only poly-$\CF$ groups are those obtained from virtually free groups
using the above-mentioned operations; that is, that a group is poly-$\CF$ if and only if
it is virtually a finitely generated subgroup of a direct product of free groups 
(Conjecture~\ref{solconj}).
This would mean that the only soluble poly-$\CF$ groups are the virtually abelian groups.
Theorem~\ref{polycf-sol} gives substantial evidence towards this special case of our conjecture.

In \cite{HRRT}, the co$\CF$ groups (groups whose word problem is the complement of a 
context-free language) were studied.  Various closure properties of the co$\CF$ groups
were determined, most of which carry over easily to 
poly-$\CF$ groups (see Proposition~\ref{closureprops} below).
Additionally, several classes of groups were shown not to be co$\CF$, using a method
\cite[Proposition~14]{HRRT} based on the correspondence between context-free 
languages and semilinear sets (see Section~\ref{semisec} below).  
We prove a strengthened version of \cite[Proposition~14]{HRRT}
(see Proposition~\ref{prop} below), which enables us to deduce that any group 
proved not co$\CF$ using \cite[Proposition~14]{HRRT} is also not poly-$\CF$.
Some examples of such groups are finitely generated nilpotent or polycyclic 
groups that are not virtually abelian.  It was these results that led to the 
attempt at a characterisation of the soluble poly-$\CF$ groups.

A major open problem for co$\CF$ groups is whether they are closed under taking free products.
It was suggested by Derek Holt that closure under free products might be much easier
to determine for poly-$\CF$ groups, but so far this problem also remains open,
though we believe that the word problem of $\Z^2 *\Z$ is not poly-$\CF$.
The poly-$\CF$ groups are somewhat related to the co$\CF$ groups, in the sense that 
if our main conjecture is true, then the poly-$\CF$ groups are a subclass of the co$\CF$-groups,
as we explain in Section~\ref{group}, following Conjecture~\ref{polyconj}.\\

Our main tools are introduced in Section~\ref{background}.  These are: various 
closure properties of the classes of poly-$\CF$ languages and poly-$\CF$ groups;
the relationship between bounded context-free languages and semilinear sets, due to 
Parikh \cite{Par} and Ginsburg and Spanier \cite{GinSpa2}; and a result by the 
author and Derek Holt \cite{BH}, showing that every finitely generated soluble 
group that is not virtually abelian has a subgroup isomorphic 
to one of a small number of types.

In Section~\ref{language}, we study the class of poly-$\CF$ languages, with a particular
focus on methods for proving languages to be \emph{not} poly-$\CF$.  To this end,
we develop several tools based on the correspondence between context-free languages
and stratified semilinear sets introduced in Section~\ref{semisec}.  
In Corollary~\ref{parikh}, we show that a language satisfying certain properties is neither 
poly-$\CF$ nor co$\CF$, while Theorem~\ref{L(n,k)} exhibits sequences of languages $L^{(n,k)}$, 
where $n, k\in \N$, such that for all $n$, the 
language $L^{(n,k)}$ is an intersection of $k$ but not $k-1$ context-free languages.  
This is an extension of a result by Liu and Weiner \cite{LW}.
 
In Section~\ref{group}, we present the known examples of poly-$\CF$ groups, and 
conjecture that these are the only ones.  We give some evidence for this conjecture
(Conjecture~\ref{polyconj}), in 
the form of results showing that it holds in the classes of nilpotent, Baumslag-Solitar
and polycyclic groups, and for the groups $G(\c)$ introduced in \cite{BH}, which are 
also shown to be not co$\CF$ if they are not virtually abelian.

We conclude with a section applying the results of Section~\ref{group} and \cite{BH} to
prove the metabelian and torsion-free soluble cases of our conjecture, and to narrow 
down the possibilities for which soluble groups could be poly-$\CF$.

\section{Background and notation}\label{background}

\subsection{Notation}

$\N, \Z$ and $\Q$ denote the natural numbers, integers and rationals respectively.
We denote the natural numbers with zero included by $\N_0$.

For $r\in \N$ and $1\leq i\leq r$, the vector in $\N_0^r$ with a $1$ in the $i$-th position 
and zeroes elsewhere will be denoted by $e_i$.  With the exception of these, 
all vectors are represented by bold letters.  
We denote the $i$-th component of the vector $\v$ by $\v(i)$.

For a set $X$, we denote the \emph{Kleene star closure} of $X$, which is the set of all 
finite length strings (also called words) of elements of $X$, by $X^*$.
In the special case $X = \{x\}$, we often denote $X^*$ by $x^*$.

\subsection{Closure properties of the poly-$\CF$ languages}

Many closure properties of the classes of $k$-${\cal CF}$ and poly-$\CF$ languages
can be deduced from the similar properties for context-free languages;
for details of these, see (for example) \cite{HopUll}.   

\begin{proposition}\label{polyclose}
For any $k\in \N$, the class of $k$-$\cal{CF}$ languages is closed under
inverse homomorphisms, inverse generalised sequential machine mappings,
union with context-free languages and intersection with regular languages.
The class of poly-$\cal{CF}$ languages is closed under all these operations,
and also under intersection and union.
\end{proposition}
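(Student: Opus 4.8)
The plan is to deduce each closure property from the corresponding known property of context-free languages, handling the $k$-$\CF$ case first and then bootstrapping to poly-$\CF$. Throughout, recall that a $k$-$\CF$ language is by definition a finite intersection $L = \bigcap_{i=1}^{k} L_i$ with each $L_i$ context-free. The unifying observation is that every operation in the first list distributes over finite intersections in the appropriate sense, so that applying it to $L$ reduces to applying it to each context-free factor $L_i$ separately.

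First I would treat the operations that commute with arbitrary intersections: inverse homomorphisms and inverse generalised sequential machine (gsm) mappings. If $\phi$ is such a map, then $\phi^{-1}(L) = \phi^{-1}\bigl(\bigcap_i L_i\bigr) = \bigcap_i \phi^{-1}(L_i)$, and since the context-free languages are closed under inverse homomorphism and inverse gsm mapping (see \cite{HopUll}), each $\phi^{-1}(L_i)$ is context-free; hence $\phi^{-1}(L)$ is an intersection of $k$ context-free languages, i.e.\ $k$-$\CF$. The same distributive identity handles intersection with a regular language $R$: writing $L \cap R = \bigcap_i (L_i \cap R)$ and using closure of the context-free languages under intersection with regular sets, we again obtain a $k$-fold intersection of context-free languages.

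Next I would handle union with a context-free language, which requires the distributive law for union over intersection. If $M$ is context-free, then
\begin{equation*}
L \cup M = \Bigl(\bigcap_{i=1}^{k} L_i\Bigr) \cup M = \bigcap_{i=1}^{k} (L_i \cup M).
\end{equation*}
Since the context-free languages are closed under union, each $L_i \cup M$ is context-free, so $L \cup M$ is $k$-$\CF$. This completes the $k$-$\CF$ claims, as all four listed operations preserve the number $k$ of intersectands.

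Finally, the poly-$\CF$ assertions follow immediately: since poly-$\CF$ means $k$-$\CF$ for some $k$, each of the four operations preserves the poly-$\CF$ class because it preserves $k$-$\CF$-ness for every fixed $k$. For the two genuinely new poly-$\CF$ operations, closure under intersection is essentially by definition — if $L$ is $j$-$\CF$ and $L'$ is $k$-$\CF$ then $L \cap L'$ is visibly an intersection of $j+k$ context-free languages, hence $(j{+}k)$-$\CF$ and so poly-$\CF$. For union I would again use distributivity: if $L = \bigcap_{i=1}^{j} L_i$ and $L' = \bigcap_{m=1}^{k} L'_m$, then
\begin{equation*}
L \cup L' = \bigcap_{i=1}^{j}\bigcap_{m=1}^{k} (L_i \cup L'_m),
\end{equation*}
and each $L_i \cup L'_m$ is context-free by closure of the context-free languages under union, so $L \cup L'$ is $(jk)$-$\CF$ and therefore poly-$\CF$. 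None of these steps presents a serious obstacle; the only point requiring mild care is the bookkeeping of how the parameter $k$ grows under intersection and union, which is precisely why the union and intersection of poly-$\CF$ languages need not remain $k$-$\CF$ for the original $k$ and hence are stated only for the poly-$\CF$ class rather than the $k$-$\CF$ class.
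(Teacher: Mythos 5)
Your proposal is correct and follows essentially the same route as the paper: distribute inverse homomorphisms, inverse gsm mappings, and unions over the intersection $\bigcap_i L_i$, absorb a regular language via closure of context-free languages under intersection with regular sets, and count $k_1+k_2$ (respectively $k_1 k_2$) factors for intersection (respectively union) of poly-$\CF$ languages. The only cosmetic difference is that the paper intersects the regular language with a single factor, $L_1\cap\ldots\cap L_{k-1}\cap(L_k\cap R)$, whereas you distribute it into every factor; both give a $k$-fold intersection.
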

\begin{proof}
Let $L = L_1\cap\ldots\cap L_k$ with each $L_i$ context-free and let $\Sigma$
be the alphabet of $L$.
Let $\Gamma$ be an alphabet and let $\phi$ be a homomorphism from $\Gamma^*$ to $\Sigma^*$, or a
generalised sequential machine mapping with input alphabet $\Gamma$ and output alphabet $\Sigma$.  Then
\begin{align*}
\phi^{-1}(L) &= \{ w\in \Gamma^* \mid \phi(w)\in L_i \; (1\leq i\leq k)\}\\
&= \bigcap_{i=1}^k \{ w\in \Gamma^* \mid \phi(w)\in L_i\} = \bigcap_{i=1}^k \phi^{-1}(L_i),
\end{align*}
and so, since the class of context-free languages is closed under inverse homomorphisms
and inverse generalised sequential machine mappings, $\phi^{-1}(L)$ is $k$-${\cal CF}$.

The class of context-free languages is closed under union and under intersection with 
regular languages.  Thus if $R$ is regular, then
$L\cap R = L_1\cap\ldots\cap L_{k-1}\cap (L_k\cap R)$ is $k$-${\cal CF}$;
and if $M$ is context-free, then
$L\cup M = \bigcap_{i=1}^k (L_i\cup M)$ is $k$-$\cal{CF}$.

The closure of the class of poly-$\cal{CF}$ languages under intersection is obvious,
since if $L_1$ is $k_1$-$\cal{CF}$ and $L_2$ is $k_2$-$\cal{CF}$, then
$L_1\cap L_2$ is an intersection of $k_1 + k_2$ context-free languages.

If $L = \cap_{i=1}^m L_i$ and $M = \cap_{j=1}^n M_j$, with each $L_i$ and $M_j$
context-free, then
\[ L\cup M = \left(\bigcap_{i=1}^m L_i\right)\cup\left(\bigcap_{j=1}^n M_j\right) = 
\bigcap_{i=1}^m\bigcap_{j=1}^n (L_i\cup L_j) \]
is $mn$-${\cal CF}$, so the class of poly-${\cal CF}$ languages is also closed under union.
\end{proof}

The closure of the poly-$\CF$ languages under union and intersection was already observed
by Wotschke \cite{Wot}, who also showed, using a theorem of Liu and Weiner 
(see Section~\ref{L(k) section} below), that
the poly-$\CF$ languages are not closed under complementation and are thus properly contained 
in the Boolean closure of the context-free languages \cite[Theorem~II.4]{Wot} .

Any recursively enumerable language can be expressed as a homomorphic image of the intersection of two 
deterministic context-free languages \cite{GGH}. 
Every poly-$\CF$ languages is context-sensitive,
since the context-sensitive languages are closed under intersection and contain the context-free languages. 
Thus the poly-$\CF$ languages are not closed under homomorphisms.

\subsection{Basic properties of the poly-$\CF$ groups} 

A central result in the theory of word problems of groups as languages is the following,
for which a proof is given in \cite{HRRT}. 
We denote the complement of $W(G,X)$ in $X^*$ by co$W(G,X)$.

\begin{lemma}\label{genset}{\rm \cite[Lemma~1]{HRRT}}
Let $\mathcal{C}$ be a class of languages closed under inverse homomorphisms 
and let $G$ be a finitely generated group.  Then the following hold.
\begin{enumerate}
\item $W(G,X)\in \mathcal{C}$ for some finite generating set $X$ if and only if 
for every finite generating set $Y$, $W(G,Y)\in \mathcal{C}$.
\item co$W(G,X)\in \mathcal{C}$ for some finite generating set $X$ if and only if 
for every finite generating set $Y$, co$W(G,Y)\in \mathcal{C}$.
\end{enumerate}
\end{lemma}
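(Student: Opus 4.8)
The plan is to reduce the statement to a single application of closure under inverse homomorphisms. It suffices to prove that for any two finite generating sets $X$ and $Y$, membership $W(G,X)\in\mathcal{C}$ implies $W(G,Y)\in\mathcal{C}$, together with the analogous implication for complements; both parts of the lemma then follow at once, since one direction of each ``if and only if'' is this implication applied with an arbitrary $Y$, and the other direction is trivial because $G$ has at least one finite generating set.

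First I would fix the convention that $W(G,X)$ is a language over the alphabet $X\cup X^{-1}$, and let $\pi\colon (X\cup X^{-1})^*\to G$ and $\rho\colon (Y\cup Y^{-1})^*\to G$ denote the two evaluation homomorphisms, so that $W(G,X)=\pi^{-1}(1)$ and $W(G,Y)=\rho^{-1}(1)$. Since $X$ generates $G$, for each letter $a\in Y\cup Y^{-1}$ I can choose a word $\phi(a)\in(X\cup X^{-1})^*$ with $\pi(\phi(a))=\rho(a)$. Extending $\phi$ multiplicatively gives a monoid homomorphism $\phi\colon(Y\cup Y^{-1})^*\to(X\cup X^{-1})^*$, and because both $\pi\circ\phi$ and $\rho$ are homomorphisms agreeing on the generating letters, they agree everywhere, i.e.\ $\pi(\phi(v))=\rho(v)$ for all $v$.

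The key identity is then immediate: $v\in W(G,Y)$ if and only if $\rho(v)=1$ if and only if $\pi(\phi(v))=1$ if and only if $\phi(v)\in W(G,X)$, so that $W(G,Y)=\phi^{-1}(W(G,X))$. Since preimages commute with complementation, the same homomorphism $\phi$ gives $\mathrm{co}\,W(G,Y)=\phi^{-1}(\mathrm{co}\,W(G,X))$. Closure of $\mathcal{C}$ under inverse homomorphisms now yields both implications, completing the proof. There is no real obstacle here: the only point needing care is the verification that $\phi$ is a genuine homomorphism of free monoids intertwining the two evaluation maps, which, once each $\phi(a)$ is chosen to represent the same group element as $a$, is a routine consequence of the universal property of the free monoid.
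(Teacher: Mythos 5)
Your proof is correct and is essentially the standard argument from the cited source [HRRT, Lemma~1] (the paper itself only cites that reference rather than reproving the lemma): one rewrites each generator of $Y\cup Y^{-1}$ as a word over $X\cup X^{-1}$ representing the same group element, obtains $W(G,Y)=\phi^{-1}(W(G,X))$, and applies closure under inverse homomorphisms, with the complement case following since preimages commute with complementation. No issues.
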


In this case, we call $G$ a \emph{$\cal{C}$ group} if $W(G)$ is in $\cal{C}$, and a \emph{co${\cal C}$ group} 
if co$W(G)$ is in $\cal{C}$, and say that $\cal{C}$ groups or co${\cal C}$ groups are
\emph{insensitive} to choice of generators.

\begin{lemma}{\rm \cite[Lemma 2]{HRRT}}\label{fgsub}
Let $\cal{C}$ be a class of languages closed under inverse homomorphisms and 
intersection with regular sets.  Then the classes of $\cal{C}$ groups and co$\cal{C}$ groups
are closed under taking finitely generated subgroups.
\end{lemma}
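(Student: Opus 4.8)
The plan is to reduce the word problem of a finitely generated subgroup $H\le G$ to that of $G$ itself by restricting attention to the sub-alphabet coming from a generating set of $H$, which is an intersection with a regular language. First I would fix a finite generating set $X$ of $G$ for which $W(G,X)\in\mathcal{C}$ (respectively $\mathrm{co}W(G,X)\in\mathcal{C}$ in the co$\mathcal{C}$ case), together with a finite generating set $Y$ of $H$. Viewing the elements of $Y$ as elements of $G$, the finite set $X\cup Y$ is again a generating set for $G$. Since $\mathcal{C}$ is closed under inverse homomorphisms, Lemma~\ref{genset} applies and shows that being a $\mathcal{C}$ (respectively co$\mathcal{C}$) group is insensitive to the choice of generators; hence $W(G,X\cup Y)\in\mathcal{C}$ (respectively $\mathrm{co}W(G,X\cup Y)\in\mathcal{C}$).

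The key observation would then be the set identity
\[ W(H,Y) = W(G,X\cup Y)\cap (Y\cup Y^{-1})^*. \]
This holds because a word over $Y\cup Y^{-1}$ evaluates to the same element of $G$ whether it is read inside $H$ or inside $G$, and the identity of $H$ is the identity of $G$; thus such a word lies in $W(H,Y)$ exactly when it lies in $W(G,X\cup Y)$. This is the only place where the hypothesis that $H$ is a subgroup of $G$ is actually used.

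Since $(Y\cup Y^{-1})^*$ is a regular language and $\mathcal{C}$ is closed under intersection with regular sets, the displayed identity gives $W(H,Y)\in\mathcal{C}$ immediately, so $H$ is a $\mathcal{C}$ group. The co$\mathcal{C}$ case is handled identically: intersecting with the same regular language yields $\mathrm{co}W(H,Y) = \mathrm{co}W(G,X\cup Y)\cap(Y\cup Y^{-1})^*$, whence $\mathrm{co}W(H,Y)\in\mathcal{C}$. (Alternatively, one could map $Y$-words to $X$-words by a fixed monoid homomorphism $\phi$ sending each generator of $H$ to a chosen $X$-word representing it, and use $W(H,Y)=\phi^{-1}(W(G,X))$ together with closure under inverse homomorphisms; I prefer the regular-intersection route since it exploits the stated hypothesis directly.)

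I do not expect a serious obstacle here: the argument is a direct combination of the two assumed closure properties, with the insensitivity provided by Lemma~\ref{genset} used only to legitimise enlarging the generating set of $G$ so that it contains the chosen generators of $H$. The sole point requiring care is the verification of the displayed set identity — in particular, that no word over $Y\cup Y^{-1}$ can represent the identity of $G$ without representing the identity of $H$ — but this is immediate from $H$ being a subgroup.
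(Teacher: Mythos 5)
Your proof is correct. The paper itself states this lemma without proof, citing \cite[Lemma~2]{HRRT}; your argument --- enlarge the generating set of $G$ to $X\cup Y$ using the insensitivity from Lemma~\ref{genset}, then recover $W(H,Y)$ as $W(G,X\cup Y)\cap (Y\cup Y^{-1})^*$ and apply closure under intersection with regular sets --- is essentially the standard proof given in that reference, and the co$\mathcal{C}$ case is handled correctly by the same intersection.
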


\begin{lemma}{\rm \cite[Lemma 5]{HRRT}}\label{fiover}
Let $\cal{C}$ be a class of languages closed under union with regular sets and 
inverse generalised sequential machine mappings.  Then the classes of $\cal{C}$ groups
and co$\cal{C}$ groups are closed under passing to finite index overgroups.
\end{lemma}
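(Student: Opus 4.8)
The plan is to realise the word problem of the overgroup as an inverse generalised sequential machine (gsm) image of the word problem of the finite-index subgroup, via a coset-rewriting machine. Since a homomorphism is a special case of a gsm mapping, the hypothesis gives in particular that $\mathcal{C}$ is closed under inverse homomorphisms, so by Lemma~\ref{genset} the properties ``$\mathcal{C}$ group'' and ``co$\mathcal{C}$ group'' are insensitive to the choice of generators; this lets me fix convenient generating sets. Writing $H$ for the finite-index subgroup of $G$, I would give $H$ a finite generating set $Y$, choose a right transversal $T = \{t_1, \ldots, t_n\}$ for $H$ in $G$ with $t_1 = 1$, and take $X = Y \cup T$ as generating set for $G$.

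The core construction is a deterministic gsm $g$ whose states record the current coset. For each state $i$ (representing the coset $H t_i$) and each letter $x \in X \cup X^{-1}$, we have $t_i x = h_{i,x}\, t_{\sigma(i,x)}$ for a uniquely determined index $\sigma(i,x)$ and element $h_{i,x} \in H$; I fix once and for all a word $w_{i,x}$ over $Y \cup Y^{-1}$ representing $h_{i,x}$, which is possible since there are only finitely many pairs $(i,x)$. The machine starts in state $1$, and on reading $x$ in state $i$ it emits $w_{i,x}$ and moves to state $\sigma(i,x)$, with state $1$ as its unique accepting state. A straightforward induction on input length shows that after reading $w$ the machine sits in the state indexing the coset of $w$, and that $w =_G g(w)\cdot t_{\sigma}$, so that $g(w)$ represents the same element of $H$ as $w$ whenever that coset is $H$ itself. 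Consequently $w$ represents the identity of $G$ exactly when the run ends in state $1$ and the output lies in $W(H,Y)$, giving $W(G,X) = g^{-1}(W(H,Y))$. As $W(H,Y)\in\mathcal{C}$ and $\mathcal{C}$ is closed under inverse gsm mappings, $G$ is a $\mathcal{C}$ group.

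For the co$\mathcal{C}$ case I would reuse the same machine together with closure under union with regular sets. A word $w$ fails to represent the identity precisely when either its run ends in a state other than $1$, or it ends in state $1$ but the output lies in $\mathrm{co}W(H,Y)$. The first condition defines a regular language $R$, namely the set of inputs driving the underlying finite automaton to a non-accepting state, while the second is exactly $g^{-1}(\mathrm{co}W(H,Y))$, since $g$ produces output only along accepting runs. Hence $\mathrm{co}W(G,X) = R \cup g^{-1}(\mathrm{co}W(H,Y))$, and closure of $\mathcal{C}$ under inverse gsm mappings and under union with regular sets shows that $G$ is a co$\mathcal{C}$ group.

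The one point requiring genuine care is the convention that a gsm emits output only along accepting runs, so that $g^{-1}(\cdot)$ silently absorbs the ``ends in state $1$'' requirement in the $\mathcal{C}$ case and isolates it into the regular set $R$ in the co$\mathcal{C}$ case. Once this is pinned down, verifying the coset-tracking invariant is a routine induction, and the only essential ingredients are the two closure hypotheses on $\mathcal{C}$.
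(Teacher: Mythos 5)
Your proof is correct and follows essentially the same route as the source: the paper gives no proof of this lemma, deferring to \cite[Lemma~5]{HRRT}, and the argument there is exactly your coset-tracking gsm, with $W(G,X) = g^{-1}(W(H,Y))$ for the first claim and $\mathrm{co}W(G,X) = R \cup g^{-1}(\mathrm{co}W(H,Y))$ for the second. Your explicit attention to the convention that a gsm produces output only along accepting runs is the right point to pin down, and the rest is the routine induction you describe.
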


Thus, by Proposition~\ref{polyclose}, we have:

\begin{proposition}\label{closureprops}
The classes co$\CF$ and $k$-$\CF$ groups (for any $k\in \N$) are
insensitive to choice of generators and closed under passing to
finitely generated subgroups and passing to finite index overgroups.
\end{proposition}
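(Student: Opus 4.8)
The plan is to deduce Proposition~\ref{closureprops} entirely from the general Lemmas~\ref{genset}, \ref{fgsub} and \ref{fiover}, together with the closure properties established in Proposition~\ref{polyclose}. The strategy is simply to check that the language classes in question satisfy the hypotheses required by each lemma, and then invoke the lemmas. This reduces the proposition to a routine verification with no genuine mathematical obstacle.

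First I would treat the $k$-$\CF$ groups. Proposition~\ref{polyclose} tells us that, for any fixed $k\in\N$, the class of $k$-$\CF$ languages is closed under inverse homomorphisms, inverse generalised sequential machine mappings, union with context-free languages (hence in particular union with regular languages, since every regular language is context-free), and intersection with regular languages. Insensitivity to choice of generators then follows immediately from Lemma~\ref{genset}, whose only hypothesis is closure under inverse homomorphisms. Closure under passing to finitely generated subgroups follows from Lemma~\ref{fgsub}, whose hypotheses are closure under inverse homomorphisms and under intersection with regular sets. Closure under passing to finite index overgroups follows from Lemma~\ref{fiover}, whose hypotheses are closure under union with regular sets and under inverse generalised sequential machine mappings. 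All four closure properties are supplied by Proposition~\ref{polyclose}, so each of the three structural properties holds for $k$-$\CF$ groups.

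The co$\CF$ groups are handled in exactly the same way, but using the closure properties of the context-free languages themselves rather than of the $k$-$\CF$ languages. The class of context-free languages is closed under inverse homomorphisms, inverse generalised sequential machine mappings, union with regular languages and intersection with regular languages (see, for instance, \cite{HopUll}). Since Lemmas~\ref{genset}, \ref{fgsub} and \ref{fiover} are each stated for an arbitrary class $\mathcal{C}$ satisfying the relevant closure hypotheses, and since these three lemmas treat $\mathcal{C}$ groups and co$\mathcal{C}$ groups on an equal footing, applying them with $\mathcal{C}$ the class of context-free languages yields the three desired properties for co$\CF$ groups.

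There is no real obstacle here; the only point requiring any care is bookkeeping, namely confirming that the specific closure operations appearing in the hypotheses of Lemmas~\ref{genset}, \ref{fgsub} and \ref{fiover} are among those listed in Proposition~\ref{polyclose} (for the $k$-$\CF$ case) and are classical for context-free languages (for the co$\CF$ case). One minor thing to note is that Proposition~\ref{polyclose} states closure under union with context-free languages rather than merely with regular languages, but since regular languages are context-free this is a strictly stronger statement and suffices to meet the hypothesis of Lemma~\ref{fiover}.
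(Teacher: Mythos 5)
Your proposal is correct and follows essentially the same route as the paper, which simply derives Proposition~\ref{closureprops} from Lemmas~\ref{genset}, \ref{fgsub} and \ref{fiover} via the closure properties in Proposition~\ref{polyclose} (with the co$\CF$ case handled by the co$\mathcal{C}$ halves of those lemmas applied to the classical closure properties of context-free languages). Your write-up just makes explicit the bookkeeping the paper leaves implicit, including the correct observation that closure under union with context-free languages subsumes closure under union with regular languages.
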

%\begin{proof}
%Follows immediately Proposition~\ref{polyclose} and 
%Lemmas~\ref{genset}, \ref{fgsub} and \ref{fiover}.
%\end{proof}

\subsection{Semilinear sets}

A useful tool for proving languages not to be poly-$\CF$ is a relationship 
between context-free languages and semilinear sets, introduced by 
Parikh \cite{Par} and then strengthened, in the case of bounded languages, 
by Ginsburg and Spanier \cite{GinSpa2}.  

A \emph{linear set} is a subset $L$ of $\N_0^r$ for which there exist a 
\emph{constant vector} $\c\in \N_0^r$ 
and a finite set of \emph{periods} $P = \{\p_i \mid 1\leq i\leq n\}\subseteq \N_0^r$ such that 
\[L = \{\c + \sum_{i=1}^n \alpha_i \p_i \mid \alpha_i\in \N_0\}.\]  
Note that the set of periods $P$ is not uniquely determined.
A \emph{semilinear set} is a union of finitely many linear sets.

Following Ginsburg~\cite{Gin}, we will use the notation $L(\c; \p_1,\ldots,\p_n)$, or $L(\c; P)$, 
for a linear set with constant $\c$ and set of periods $P = \{\p_1,\ldots,\p_n\}$.  
For $C$ a set of constant vectors, we will denote $\bigcup_{\c\in C} L(\c; P)$ by $L(C; P)$. 
If $C = \{\c_1,\ldots,\c_m\}$, we will also write $L(\c_1,\ldots,\c_m;\p_1,\ldots,\p_n)$
for $L(C;P)$.

If $L = L(\c; P)$, we define $L^{\Q}$ to be the set $\{\c + \sum_{i=1}^n a_i\p_i \mid a_i\in \Q\}$.  
This is a coset in $\Q^n$ of the $\Q$-subspace spanned by $P$.  
We define $L^{\mathbf{0}}$ to be $L(\mathbf{0}; P)$, that is, the linear set having the same periods
as $L$ and constant $\mathbf{0}$.\\

A subset $P$ of $\N_0^r$ is \emph{stratified} if it satisfies the following conditions:
\begin{enumerate}
\item each $\p\in P$ has at most two non-zero components, and 
\item there do not exist $i<j<k<l$ and non-zero $a, b, c, d\in \N$ such that 
$ae_i + be_k$ and $ce_j+de_l$ are both in $P$.  
\end{enumerate}

A linear set is \emph{stratified} if it can be expressed using a stratified set of periods.
A semilinear set is \emph{stratified} if it can be expressed as a union of finitely many 
stratified linear sets.  (We follow Liu and Weiner \cite{LW} for this terminology.)  
Note that stratified linear and semilinear sets are not generally 
stratified sets in the sense of the previous paragraph. 

\subsubsection{Stratified semilinear sets and bounded poly-$\CF$ languages}\label{semisec}

The \emph{commutative image} of a language $L$ over $\{a_1,\ldots,a_r\}$ is 
the subset of $\N_0^r$ given by mapping each $w\in L$ to 
the tuple $(n_1,\ldots,n_r)$, where $n_i$ is the number of occurrences of $a_i$ in $w$.
Parikh's theorem \cite{Par} says that the commutative image of a context-free 
language is always a semilinear set.
The converse of Parikh's theorem does not hold:  consider for example the language
$\{a^m b^n c^m d^n \mid m,n\in \N_0\}$.

A language $L\subseteq X^*$ is \emph{bounded} if there exist $w_1,\ldots,w_n\in X^*$ 
such that $L\subseteq w_1^*\ldots w_n^*$, in which case we can define
a corresponding subset of $\N_0^n$:  
\[ \Phi(L) = \{ (m_1,\ldots,m_n) \mid m_i\in \N_0, w_1^{m_1}\ldots w_n^{m_n}\in L\}.\]
When $w_1,\ldots,w_n$ are distinct single symbols, this is the same as the commutative
image of $L$.  Thus the following result of Ginsburg and Spanier strengthens Parikh's theorem in the case of 
bounded languages.
 
\begin{theorem}\label{Parikh}{\rm \cite[Theorem 5.4.2]{Gin}}
Let $W\subseteq w_1^*\ldots w_n^*$, each $w_i$ a word.  
Then $W$ is context-free if and only if $\Phi(W)$ is a stratified semilinear set. 
\end{theorem}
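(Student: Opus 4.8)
The plan is to prove both implications, handling the direction ``$\Phi(W)$ stratified semilinear $\Rightarrow W$ context-free'' by an explicit grammar construction, and the harder direction ``$W$ context-free $\Rightarrow \Phi(W)$ stratified semilinear'' by a pumping analysis of derivation trees. I take Parikh's theorem (semilinearity of the commutative image of a context-free language) as known; the real content here is to extract the extra \emph{stratified} structure forced by the bounded form $W\subseteq w_1^*\cdots w_n^*$.

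For the easier direction, since the context-free languages are closed under union and $\Phi(W)$ is a finite union of stratified linear sets, it suffices to build a context-free grammar for $W$ when $\Phi(W)=L(\c;P)$ for a single stratified set of periods $P$. I would read off the constant $\c$ as a fixed skeleton $w_1^{\c(1)}\cdots w_n^{\c(n)}$, realise each one-component period $a e_i$ as a loop $A_i\to w_i^a A_i\mid\varepsilon$ inserted at block $i$ (which is merely regular), and realise each two-component period $a e_i + b e_j$ with $i<j$ by a matched-insertion rule $A\to w_i^a A w_j^b\mid\varepsilon$ in the style of balanced brackets, so that the $\alpha$-fold application emits $w_i^{\alpha a}$ and $w_j^{\alpha b}$ together, exactly as a single period demands. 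The role of the stratification (non-crossing) hypothesis is precisely to guarantee that the resulting system of bracket pairs is well-nested, so that the blocks can still be emitted in the correct left-to-right order $w_1,\ldots,w_n$ by one grammar; a crossing pair $a e_i + b e_k$, $c e_j + d e_l$ with $i<j<k<l$ would force the independent counting that leaves the context-free languages, as in $\{a^m b^n c^m d^n\}$. I would then verify that the grammar generates exactly $\{w_1^{m_1}\cdots w_n^{m_n}\mid (m_1,\ldots,m_n)\in L(\c;P)\}$.

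For the hard direction I would fix a context-free grammar for $W$ in a convenient normal form and analyse its derivation trees. Pump-free derivations have boundedly many skeletons and supply the finitely many constant vectors. Each iterated pump $A\Rightarrow^* uAv$ contributes a period, and two features must be established. First, each of $u$ and $v$ is, up to boundary effects, a power of a single block $w_i$, so the period has at most two nonzero components $a e_i + b e_j$ with $i\leq j$: if $u$ straddled two distinct blocks then iterating the pump would insert alternating blocks and thus produce a word outside $w_1^*\cdots w_n^*$, a contradiction. Second, the non-crossing condition follows from the tree structure, since any two pumps in a derivation are either nested, in which case the inner index-interval $[i,j]$ sits inside the outer one, or lie in disjoint subtrees, in which case their intervals are disjoint and correctly ordered; in neither case can the forbidden interleaving $i<j<k<l$ arise.

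The main obstacle is the bookkeeping in this last direction: pinning down exactly which coordinates a pump can affect (the boundary-straddling argument needs care when a pumped factor meets a block boundary), and translating the nesting or disjointness of subtrees into nesting or disjointness of the associated index-intervals cleanly enough to deduce the stratification condition. Assembling the contributions, with constants coming from pump-free skeletons and periods from pumps, into a finite union of stratified linear sets, while accounting for derivations containing several pumps, is where the argument will require the most care.
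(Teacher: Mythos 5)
The paper does not prove this statement: it is imported verbatim from Ginsburg \cite[Theorem 5.4.2]{Gin}, and the author explicitly remarks that only the ``only if'' direction is ever used. So there is no proof in the paper to compare yours against; what follows is an assessment of your sketch on its own terms. Your outline does follow the standard route (grammar construction with nested bracket rules for the easy direction; skeleton-plus-pumps analysis for the hard one), and the two structural observations you isolate --- that boundedness forces each pumped pair to contribute to at most two block-coordinates, and that nesting/disjointness of pumps in a derivation tree yields exactly the non-crossing condition --- are the correct key ideas.

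There is, however, one genuine gap beyond the bookkeeping you acknowledge. Your pumping analysis produces, for each word of $W$, \emph{one} tuple (the one read off from a particular derivation), and so it shows that $W=\{w_1^{m_1}\cdots w_n^{m_n}\mid \mathbf{m}\in S\}$ for \emph{some} stratified semilinear $S$. But $\Phi(W)$ as defined in the paper is the full preimage $\{(m_1,\ldots,m_n)\mid w_1^{m_1}\cdots w_n^{m_n}\in W\}$, and the map $\mathbf{m}\mapsto w_1^{m_1}\cdots w_n^{m_n}$ need not be injective: take $w_1=w_2=a$ and $W=\{a^{2j}\}$, where a derivation-based analysis naturally yields $\{(2j,0)\}$ while $\Phi(W)=\{(i,j)\mid i+j \text{ even}\}$. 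Showing that the \emph{full} preimage is still stratified semilinear requires an extra argument about the equivalence of factorisations, which is precisely the subtlety the paper alludes to when it says Ginsburg and Spanier ``used different notation, which made it more transparent how to get from $\Phi(W)$ back to $W$.'' Two smaller points to nail down: the claim that inserting $u^{\alpha}$ raises $m_i$ by a fixed integer $a$ needs the observation that iterating the pump forces $u$ to be a power of a conjugate of the primitive root of $w_i$ (so $|u|$ is the right multiple); and since the set of pumps jointly available depends on which skeleton the derivation follows, the decomposition into finitely many linear sets must be organised per derivation type, not globally.
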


Ginsburg and Spanier used different notation, which made it more transparent 
how to get from $\Phi(W)$ back to $W$.  But as we will only require the `only if' direction, 
we prefer this tidier notation.

Theorem~\ref{Parikh} is easily extended to the poly-$\CF$ languages.

\begin{corollary}\label{polyParikh}
If $L$ is a $k$-$\CF$ language, then for any $w_1,\ldots,w_n$, 
the subset ${\Phi(L\cap w_1^*\ldots w_n^*)}$ of $\N_0^n$ is an 
intersection of $k$ stratified semilinear sets.
\end{corollary}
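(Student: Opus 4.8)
The plan is to reduce Corollary~\ref{polyParikh} directly to Theorem~\ref{Parikh} by exploiting the distributivity of the map $\Phi$ over intersections, together with the closure properties of context-free languages already used in the proof of Proposition~\ref{polyclose}. First I would write $L = L_1\cap\ldots\cap L_k$ with each $L_i$ context-free. The key observation is that intersecting with the bounded regular language $w_1^*\ldots w_n^*$ and then applying $\Phi$ interacts well with the intersection structure: for a fixed choice of $w_1,\ldots,w_n$, a tuple $(m_1,\ldots,m_n)$ lies in $\Phi(L\cap w_1^*\ldots w_n^*)$ precisely when the single word $w_1^{m_1}\ldots w_n^{m_n}$ lies in every $L_i$, which is the same as saying that $(m_1,\ldots,m_n)$ lies in $\Phi(L_i\cap w_1^*\ldots w_n^*)$ for each $i$. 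Hence
\[
\Phi(L\cap w_1^*\ldots w_n^*) = \bigcap_{i=1}^k \Phi(L_i\cap w_1^*\ldots w_n^*).
\]

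With this identity in hand, the remaining work is to show each factor $\Phi(L_i\cap w_1^*\ldots w_n^*)$ is a stratified semilinear set, and then invoke the definition of $k$-$\CF$ for the intersection. For each $i$, the language $L_i\cap w_1^*\ldots w_n^*$ is an intersection of a context-free language with a regular language, hence context-free (this is one of the closure properties recalled in the proof of Proposition~\ref{polyclose}). Moreover it is bounded, since it is contained in $w_1^*\ldots w_n^*$. Theorem~\ref{Parikh} therefore applies verbatim to each $L_i\cap w_1^*\ldots w_n^*$, yielding that $\Phi(L_i\cap w_1^*\ldots w_n^*)$ is a stratified semilinear set. Substituting into the displayed identity exhibits $\Phi(L\cap w_1^*\ldots w_n^*)$ as an intersection of exactly $k$ stratified semilinear sets, which is the desired conclusion.

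The only subtlety worth pausing on, and the step I would state most carefully, is the set-theoretic identity above. It rests on the fact that $\Phi$ is defined via a single canonical representative $w_1^{m_1}\ldots w_n^{m_n}$ for each tuple, so membership of a tuple in $\Phi(L\cap w_1^*\ldots w_n^*)$ is an \emph{unambiguous} condition on that one word, and conjunction of membership conditions on a fixed word distributes straightforwardly. If instead the $w_i$ admitted distinct factorisations of the same string there could be a worry, but $\Phi$ sidesteps this by construction. I expect no genuine obstacle here; the content of the corollary lies entirely in Theorem~\ref{Parikh}, and the extension to the poly-$\CF$ setting is purely formal, mirroring exactly how Proposition~\ref{polyclose} lifted context-free closure properties to the $k$-$\CF$ classes.
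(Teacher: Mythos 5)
Your argument is correct and is essentially identical to the paper's proof: both decompose $L$ as $\bigcap_i L_i$, observe that $\Phi(L\cap w_1^*\ldots w_n^*) = \bigcap_i \Phi(L_i\cap w_1^*\ldots w_n^*)$ via the canonical representative $w_1^{m_1}\ldots w_n^{m_n}$, and apply Theorem~\ref{Parikh} to each $L_i\cap w_1^*\ldots w_n^*$. Your explicit remark that each $L_i\cap w_1^*\ldots w_n^*$ is context-free by closure under intersection with regular languages is a detail the paper leaves implicit, but nothing of substance differs.
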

\begin{proof}
Let $L = L_1\cap\ldots\cap L_k$ with each $L_i$ context-free, and let $W = w_1^*\ldots w_n^*$, 
where each $w_i$ is a word in the alphabet of $L$. 
For $1\leq i\leq k$, let $M_i = L_i\cap W$.  Then $L\cap W = L_1\cap\ldots\cap L_k\cap W = \bigcap_{i=1}^k M_i$ and 
\begin{align*}
\Phi(L\cap W) &= \{ (m_1,\ldots,m_n) \mid m_i\in \N_0, w_1^{m_1}\ldots w_n^{m_n}\in L\cap W\}\\
        &= \bigcap_{i=1}^k \{ (m_1,\ldots,m_n) \mid m_i\in \N_0, w_1^{m_1}\ldots w_n^{m_n}\in M_i\}\\
        &= \bigcap_{i=1}^k \Phi(M_i)
\end{align*}
and each $\Phi(M_i)$ is a stratified semilinear set by Theorem~\ref{Parikh}.
\end{proof}

Proving that a given semilinear set is not stratified is by no means straightforward, since there 
can be many different ways of expressing a semilinear set as a union of finitely many linear sets.  
Ginsburg \cite{Gin} mentioned that there was no known decision procedure for determining whether an
arbitrary semilinear set is stratified, and it appears that this is still an open problem. 

\subsubsection{Closure properties of the class of semilinear sets}

The class of semilinear sets is obviously closed under union.
Thinking geometrically, one would also expect this class to be closed under the 
other Boolean operations.  This is indeed true, but much less easy to show.

The intersection of finitely many linear sets is always a 
semilinear set of quite a restricted form.  
This result can be derived from the proof of Theorem~5.6.1 in \cite{Gin}. 
For another proof, obtained independently by the author, see \cite[Proposition~2.3]{TB}.

\begin{proposition}\label{intlin}
If $L$ is the nonempty intersection of linear subsets $L_1,\ldots,L_n$ of $\N_0^r$, 
then $L$ is semilinear.    Moreover, 
\[ L = L(\C_1,\ldots,\C_k; \P_1,\ldots,\P_m),\]
where $\C_i\in\N_0^r$, and $\P_1,\ldots,\P_m$ are such that 
$\bigcap_{i=1}^n L_i^{\mathbf{0}} = L(\mathbf{0}; \P_1,\ldots,\P_m)$.\\
If $L_1,\ldots,L_n$ all have constant vector zero, then $L$ is linear with constant vector zero.
\end{proposition}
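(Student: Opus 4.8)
The plan is to reduce membership in the intersection to the solvability of a single system of linear Diophantine equations, and then to read off the required structure from the classical description of its nonnegative solution set. Write $L_i = L(\c_i; \p_{i,1},\ldots,\p_{i,n_i})$ and set $N = n_1+\cdots+n_n$. A vector $\v\in\N_0^r$ lies in $L=\bigcap_{i=1}^n L_i$ exactly when there are nonnegative integers $\alpha_{i,j}$ with $\c_i + \sum_{j=1}^{n_i}\alpha_{i,j}\p_{i,j}=\v$ for every $i$. Equating the expressions for consecutive indices eliminates $\v$ and yields a system $A\x=\b$ in the unknown $\x=(\alpha_{i,j})\in\N_0^N$, where the right-hand side $\b$ stacks the integer vectors $\c_{i+1}-\c_i$. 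The affine evaluation map $\theta(\x)=\c_1+\sum_j\alpha_{1,j}\p_{1,j}$ then sends the nonnegative solution set $T=\{\x\ge\mathbf{0}:A\x=\b\}$ onto $L$, and $T\ne\emptyset$ because $L\ne\emptyset$. (One can get bare semilinearity of $L$ more cheaply from Presburger definability, but that route does not deliver the refined form, so I would argue directly.)

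The key input is the standard structure theorem for nonnegative integer solutions of a linear Diophantine system, obtainable via Dickson's lemma and Hilbert bases of the solution cone: $T=F+H$, where $F$ is finite and $H=\{\x\ge\mathbf{0}:A\x=\mathbf{0}\}$ is the finitely generated monoid of solutions of the associated homogeneous system; fix monoid generators $\mathbf{h}_1,\ldots,\mathbf{h}_m$ of $H$. I would then identify the two pieces under $\theta$. Writing $\theta_0$ for the linear part of $\theta$, for $\mathbf{h}=(\beta_{i,j})\in H$ the homogeneous equations force all the sums $\sum_j\beta_{i,j}\p_{i,j}$ ($1\le i\le n$) to coincide, and this common value is precisely $\theta_0(\mathbf{h})$; as $\mathbf{h}$ ranges over $H$ it ranges over $\bigcap_{i=1}^n L_i^{\mathbf{0}}$. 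Hence, setting $\P_t=\theta_0(\mathbf{h}_t)$, the generators of $H$ give
\[ \bigcap_{i=1}^n L_i^{\mathbf{0}} = L(\mathbf{0}; \P_1,\ldots,\P_m), \]
which in particular shows this intersection is linear with constant $\mathbf{0}$, furnishing exactly the period set named in the statement.

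Finally I would push the whole decomposition through $\theta$. Since $\theta$ is affine, $\theta(\mathbf{f}+\mathbf{h})=\theta(\mathbf{f})+\theta_0(\mathbf{h})$ for $\mathbf{f}\in F$, $\mathbf{h}\in H$; putting $\{\C_1,\ldots,\C_k\}=\theta(F)$ and using the previous display,
\[ L=\theta(T)=\bigcup_{s=1}^{k}\bigl(\C_s+\textstyle\bigcap_{i=1}^n L_i^{\mathbf{0}}\bigr)=L(\C_1,\ldots,\C_k;\P_1,\ldots,\P_m). \]
Each $\C_s=\theta(\mathbf{f})$ with $\mathbf{f}\in F\subseteq T$ is a genuine common value of the $n$ defining expressions, hence an element of $L\subseteq\N_0^r$, so the constants lie in $\N_0^r$ as required. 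The final sentence is the special case $\c_1=\cdots=\c_n=\mathbf{0}$: then $\b=\mathbf{0}$, so $T=H$ and one may take $F=\{\mathbf{0}\}$, giving the single constant $\theta(\mathbf{0})=\mathbf{0}$ and thus $L=L(\mathbf{0};\P_1,\ldots,\P_m)$, linear with constant vector zero.

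I expect the main obstacle to be bookkeeping rather than any hard estimate: one must set up the combined system $A\x=\b$ so that its homogeneous part is \emph{literally} $\bigcap_i L_i^{\mathbf{0}}$ under $\theta_0$, and verify that $\theta$ maps $T$ onto $L$ and $F\subseteq T$. Granting the Gordan/Dickson structure theorem for $T$, the remainder is a direct push-forward, and the only point needing a word is the (automatic) nonnegativity of the constants $\C_s$.
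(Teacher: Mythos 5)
Your proof is correct. The paper does not actually print a proof of this proposition (it defers to Ginsburg's Theorem~5.6.1 and to the author's thesis), but your route --- encoding membership in the intersection as a single linear Diophantine system $A\x=\b$, invoking Dickson/Gordan to write its nonnegative solution set as $F+H$ with $F$ finite and $H$ a finitely generated monoid, and pushing this through the affine evaluation map $\theta$ --- is precisely the standard argument underlying those references, and the details that matter (surjectivity of $\theta$ onto $L$, the identification $\theta_0(H)=\bigcap_{i=1}^n L_i^{\mathbf{0}}$, and the nonnegativity of the constants) all check out.
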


\begin{corollary}\label{intsl}{\rm \cite[Theorem~5.6.1]{Gin}}
Let $L$ be an intersection of finitely many semilinear sets.  
Then $L$ is a semilinear set.
\end{corollary}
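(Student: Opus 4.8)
The plan is to reduce the statement to Proposition~\ref{intlin} using the distributive law relating unions and intersections. Write $L = S_1 \cap \cdots \cap S_n$, with each $S_i$ semilinear, so that by definition each $S_i$ is a finite union of linear sets, say $S_i = \bigcup_{j=1}^{m_i} L_{i,j}$ with each $L_{i,j}$ linear. Applying the distributive law to interchange the intersection with the unions gives
\[ L = \bigcap_{i=1}^n \bigcup_{j=1}^{m_i} L_{i,j} = \bigcup_{(j_1,\ldots,j_n)} \bigl( L_{1,j_1} \cap \cdots \cap L_{n,j_n} \bigr), \]
where the union ranges over the $\prod_{i=1}^n m_i$ tuples $(j_1,\ldots,j_n)$ with $1 \leq j_i \leq m_i$; in particular it is a finite union.

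Each term $L_{1,j_1} \cap \cdots \cap L_{n,j_n}$ is an intersection of finitely many linear sets. If such an intersection is empty it can be discarded; otherwise it is semilinear by Proposition~\ref{intlin}. Thus $L$ is expressed as a finite union of semilinear sets, and since a finite union of semilinear sets is manifestly semilinear (a finite union of finite unions of linear sets is again a finite union of linear sets), $L$ is semilinear.

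All the genuine content of this corollary resides in Proposition~\ref{intlin}: once intersections of linear sets are known to be semilinear, the extension to intersections of arbitrarily many semilinear sets follows by routine Boolean manipulation. The hard part, namely understanding how intersections of linear sets behave, has therefore already been dealt with before this statement, so I expect no serious obstacle in the present argument.
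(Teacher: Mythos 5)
Your proof is correct and is exactly the deduction the paper intends: the statement is presented as a corollary of Proposition~\ref{intlin} with the proof left implicit, and the distributive-law argument reducing an intersection of finite unions of linear sets to a finite union of intersections of linear sets is the standard (and only natural) way to make that deduction. No issues.
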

%\begin{proof}
%$L$ can be expressed as a union of finitely many sets, each of which is an intersection 
%of finitely many linear sets.  By Proposition~\ref{intlin}, an intersection of finitely many linear 
%sets is semilinear, hence $L$ is itself semilinear.
%\end{proof}

\begin{proposition} {\rm \cite[Theorem~6.2 and Corollary~1]{GinSpa1} }\label{compsl}
If $L$ and $M$ are semilinear subsets of $\N_0^r$, then $M - L$ is also a semilinear subset
of $\N_0^r$ and effectively calculable from $L$ and $M$.  In particular, 
since $\N_0^r$ is semilinear, if $L$ is a 
semilinear subset of $\N_0^r$, then the complement of $L$ in $\N_0^r$ is semilinear.
\end{proposition}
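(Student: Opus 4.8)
The plan is to reduce the whole statement to the single claim that the complement in $\N_0^r$ of a \emph{linear} set is semilinear. First I would write $M - L = M \cap (\N_0^r \setminus L)$; since the semilinear sets are closed under intersection by Corollary~\ref{intsl}, it is enough to show that complements of semilinear sets are semilinear (the ``in particular'' clause is then the case $M = \N_0^r$, which is linear, being $L(\mathbf{0}; e_1,\ldots,e_r)$). Writing a semilinear $L$ as a finite union $\bigcup_i L_i$ of linear sets and using De Morgan, $\N_0^r \setminus L = \bigcap_i (\N_0^r \setminus L_i)$, so a second use of Corollary~\ref{intsl} reduces everything to: \emph{the complement of a single linear set $L(\c; \p_1,\ldots,\p_n)$ is semilinear.}

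For this core step I would first reduce to the case in which the periods are linearly independent over $\Q$, by rewriting $L$ as a finite union of linear sets whose period sets are linearly independent subsets of $\{\p_1,\ldots,\p_n\}$; this is a Carath\'{e}odory/Hilbert-basis decomposition of the integer points of the rational cone generated by the periods, and it keeps us inside the semilinear world. Once the periods are independent, the coefficients in $\x - \c = \sum_i a_i \p_i$ are uniquely determined, so membership $\x \in L$ is equivalent to a finite conjunction of atomic conditions on $\x$: linear equations cutting out the affine span $\c + \langle \p_1,\ldots,\p_n\rangle_\Q$, linear inequalities expressing $a_i \ge 0$ (after clearing denominators, with integer coefficients), and congruence conditions expressing that each $a_i$ is an integer. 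Negating, $\N_0^r \setminus L$ is the union over these atoms of their individual complements, and the complement of each atom is again of the same elementary type: the complement of a half-space is a union of half-spaces, and the complement of a congruence class is a union of congruence classes. Since the integer points of a single half-space, and a single congruence class, are each semilinear, the closure of the semilinear sets under union lets us conclude that $\N_0^r \setminus L$ is semilinear. Every step here --- the independent-period decomposition, the passage to equations, inequalities and congruences, and the base semilinear representations --- is algorithmic, which yields the asserted effectiveness.

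The main obstacle is the core step, and within it the genuinely delicate part is the geometry of a linear set that is not full-dimensional. Because the periods need not be independent and the combinations must be \emph{nonnegative}, $L$ is not simply a coset of a sublattice but the set of lattice points inside a shifted rational cone, and it is exactly this boundary behaviour that makes na\"{i}ve complementation fail. The reduction to independent periods --- equivalently, a clean description of the integer points of the cone generated by $\p_1,\ldots,\p_n$ --- is where the real work lies; once that is in place, separating the cone-membership inequalities from the lattice congruence conditions and negating them is routine given the closure properties already available. (This elementary route is in effect a hands-on version of the fact that the semilinear sets coincide with the Presburger-definable sets, for which Boolean closure is immediate.)
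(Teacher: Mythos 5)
The paper does not prove this proposition at all: it is quoted from Ginsburg and Spanier, whose argument for \cite[Theorem~6.2]{GinSpa1} is a direct and rather intricate analysis of representations of points of $M-L$. Your route is genuinely different and is, in essence, the quantifier-free Presburger description of a linear set. The opening reductions are fine: $M-L = M\cap(\N_0^r\setminus L)$, De Morgan, and Corollary~\ref{intsl} (which precedes this proposition in the paper, so there is no circularity) correctly localise everything to the complement of a single linear set, and the ``in particular'' clause does follow as the case $M=\N_0^r$. What your approach buys is conceptual transparency --- once a linear set with independent periods is rewritten as a conjunction of integer linear equations, inequalities and congruences, Boolean closure is formally immediate --- at the price of importing two nontrivial external facts.

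Those two facts are where the write-up is not yet a proof. First, the decomposition of an arbitrary $L(\c;\p_1,\ldots,\p_n)$ into finitely many linear sets whose period sets are linearly independent subsets of $\{\p_1,\ldots,\p_n\}$ is a genuine theorem (Eilenberg--Sch\"utzenberger/Ito), not a one-line consequence of Carath\'eodory: Carath\'eodory gives nonnegative \emph{rational} coefficients supported on an independent subset, and converting these into a ``finitely many constants plus integer combinations'' description requires an argument (splitting off fractional parts and showing the residual points fall into finitely many classes). You correctly identify this as the crux but do not supply it. Second, the claim that the integer points of a single half-space $\{\x\in\N_0^r \mid f(\x)\geq 0\}$ form a semilinear set is itself a small instance of the phenomenon being proved --- it is precisely a set of lattice points in a shifted rational cone, the very objects you warn about --- and needs its own justification (Dickson's or Gordan's lemma applied to the minimal elements of the associated monoid). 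Both ingredients are true, standard and constructive, so the effectiveness claim survives and the proposal is repairable into a complete proof; as it stands it is a correct strategy with its two hardest lemmas cited rather than proved.
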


\subsubsection{Dimension of linear sets}

If $V$ is a subspace of a vector space $W$ with $\dim(V) < \dim(W)$, 
then the \emph{dimension} of a coset of $V$ in $W$ is 
defined to be the dimension of $V$.
The \emph{dimension} of a linear set $L$ is defined to be the dimension of $L^{\Q}$ or, equivalently,
the dimension of the vector space over $\Q$ spanned by the periods of $L$.

We record here a result about the dimension of linear sets which will be useful later.  
This is a known result, but the only reference we have for it is \cite{LW}, where the proof
given is incorrect.  A proof is included in the author's Ph.D. thesis \cite[Proposition~2.10]{TB}.

\begin{proposition}\label{dimlin}
A linear set of dimension $n+1$ cannot be expressed as a union of finitely many linear sets of dimension $n$ or less.
\end{proposition}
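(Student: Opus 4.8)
The plan is to reduce the statement to the elementary fact that $\N_0^{n+1}$ cannot be covered by finitely many proper affine subspaces of $\Q^{n+1}$. Write $L = L(\c; P)$ with $\dim L = n+1$, so the periods in $P$ span a $\Q$-subspace of dimension $n+1$; choose among them $n+1$ linearly independent periods $\p_1,\ldots,\p_{n+1}$. It suffices to treat the sublinear set $L' = L(\c; \p_1,\ldots,\p_{n+1})$, since $L' \subseteq L$ and so any expression of $L$ as a union of lower-dimensional linear sets restricts to such an expression for $L'$. Replacing $L$ by $L'$, I may assume the periods are linearly independent, so that the parametrisation $\phi \colon \N_0^{n+1} \to L$, $(\alpha_1,\ldots,\alpha_{n+1}) \mapsto \c + \sum_i \alpha_i \p_i$, is a bijection onto $L$ and extends to an affine isomorphism $\Q^{n+1} \to L^{\Q}$.

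Now suppose for contradiction that $L = M_1 \cup \cdots \cup M_t$ with each $M_j$ a linear set of dimension at most $n$. Then $M_j \subseteq M_j^{\Q}$, an affine subspace of dimension at most $n$, whence $M_j \subseteq M_j^{\Q} \cap L^{\Q}$, an affine subspace of $L^{\Q}$ of dimension at most $n$. Pulling back through the affine isomorphism $\phi$, the set $A_j = \phi^{-1}(M_j^{\Q} \cap L^{\Q})$ is a proper affine subspace of $\Q^{n+1}$, and $\N_0^{n+1} = \phi^{-1}(L) \subseteq A_1 \cup \cdots \cup A_t$.

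It remains to show that $\N_0^{n+1}$ is not contained in a finite union of proper affine subspaces of $\Q^{n+1}$. Each $A_j$, having codimension at least one, lies in the zero locus of some nonzero affine-linear functional $f_j$ on $\Q^{n+1}$. The product $F = f_1 \cdots f_t$ is then a nonzero polynomial, since the polynomial ring over $\Q$ is an integral domain, and it vanishes at every point of $\N_0^{n+1}$. But a nonzero polynomial in $n+1$ variables over $\Q$ cannot vanish identically on $\N_0^{n+1}$, as one sees by induction on the number of variables, using that a nonzero one-variable polynomial has only finitely many roots. This contradiction completes the proof.

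The substance of the argument lies in the reduction steps rather than in the final contradiction, which is standard. The point requiring care is the choice of linearly independent periods: without it the parametrisation $\phi$ fails to be injective and the preimages $A_j$ need not be proper subspaces, so the dimension drop that forces each $A_j$ into a hyperplane would be lost. Passing to the sublinear set $L'$ is exactly what makes the affine-geometric picture exact, and I expect this to be the only place where one must be vigilant.
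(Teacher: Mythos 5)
Your argument is correct. Note that the paper does not actually contain a proof of Proposition~\ref{dimlin}: it defers to the author's thesis \cite[Proposition~2.10]{TB} and remarks that the published proof in \cite{LW} is wrong, so there is nothing in the text to compare against line by line. Your route --- pass to a sublinear set $L'$ on $n+1$ linearly independent periods, use the resulting affine isomorphism $\Q^{n+1}\to (L')^{\Q}$ to transport the problem to $\N_0^{n+1}$, and then observe that $\N_0^{n+1}$ cannot lie in a finite union of proper affine subspaces because the product of the defining affine-linear forms would be a nonzero polynomial vanishing on $\N_0^{n+1}$ --- is clean and complete. One phrasing deserves a correction, though it does not affect validity: you say that an expression of $L$ as a union of lower-dimensional linear sets ``restricts to such an expression for $L'$'', but $M_j\cap L'$ is in general only semilinear (cf.\ Proposition~\ref{intlin}), not linear, so $L'$ need not inherit a decomposition into lower-dimensional \emph{linear} sets. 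Fortunately your argument never uses that; all it needs is the containment $L'\subseteq \bigcup_j M_j\subseteq \bigcup_j M_j^{\Q}$, with each $M_j^{\Q}$ an affine subspace of dimension at most $n$, and that holds directly. You are also right to single out the linear independence of the chosen periods as the load-bearing point: without it $\phi$ is not injective and the preimages $A_j$ could fail to be proper.
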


\subsection{Subgroups of finitely generated soluble groups}

The following theorem is a combination of Theorems~3.3 and~5.2 in \cite{BH}.
By $\Z^\infty$, we mean the free abelian group of countably infinite rank.
For the definition of a proper Gc-group, see Section~\ref{Gc-sec}.
A group is \emph{metabelian} if it has derived length at most $2$.

\begin{theorem}\label{solsubs}
Let $G$ be a finitely generated soluble group which is not virtually abelian.  
Then $G$ has a subgroup isomorphic to at least one of the following.
\begin{enumerate}
\item $\Z^\infty$;
\item a proper Gc-group; 
\item a finitely generated group $H$ with an infinite
normal torsion subgroup $U$, such that $H/U$ is
either free abelian or a proper Gc-group.
\end{enumerate}
If $G$ is metabelian, then the subgroup $H$ in (iii) can always be taken to be $C_p\wr \Z$
for some prime $p$.
\end{theorem}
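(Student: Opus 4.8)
The plan is to prove the metabelian case by hand using module theory, and then reduce the general soluble case to it via a dichotomy between minimax and non-minimax groups. For the metabelian case, let $G$ be finitely generated metabelian and not virtually abelian, write $A = G'$ and $Q = G/G'$, and invoke P.\ Hall's theorem that $A$ is finitely generated as a module over the group ring $R = \Z Q$, where $Q$ is a finitely generated abelian group acting by conjugation. The three alternatives should emerge from the decomposition of $A$, viewed as an abelian group, into torsion and torsion-free parts. Writing $T$ for the torsion subgroup of $A$: if $T$ is infinite then, since $R$ is noetherian, $T$ is a finitely generated torsion $R$-module, and because $Q$ acts by order-preserving automorphisms it has finite exponent and hence is supported on finitely many rational primes; localising at a prime $p$ carrying an infinite section I expect to extract a free $\mathbb{F}_p[t^{\pm 1}]$-section, which together with a suitable $t\in Q$ of infinite order yields a copy of $C_p \wr \Z$, exactly alternative (iii) with $H = C_p\wr\Z$ and $H/U$ free abelian. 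If instead $T$ is finite, I would pass to $G/T$ (still finitely generated metabelian and still not virtually abelian) and so assume $A$ torsion-free. Then either $A$ has infinite $\Z$-rank, in which case a maximal $\Z$-independent subset spans a copy of $\Z^\infty$ (alternative (i)), or $A$ has finite $\Z$-rank, in which case $G$ is a finite-rank metabelian group that is not virtually abelian and the non-trivial $Q$-action must produce a proper Gc-group, one of the groups $G(\c)$ (alternative (ii)).

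For the general soluble case the key organising principle is whether or not $G$ has finite Pr\"ufer rank, equivalently (for finitely generated soluble groups) whether $G$ is minimax. If $G$ is not minimax it has abelian sections of infinite rank, arising either from infinite torsion-free rank, from unbounded rank at a single prime, or from torsion involving infinitely many primes; in each case I would use finite generation to locate a single element whose translates under some infinite cyclic subgroup are independent, yielding a subgroup isomorphic to $\Z\wr\Z$ (hence containing $\Z^\infty$, alternative (i)) or to $C_p\wr\Z$ (alternative (iii) with $H/U$ free abelian). If $G$ is minimax it has finite torsion-free rank and a very restricted structure, so the structure theory of soluble minimax groups applies: a finite-index subgroup is nilpotent-by-abelian with controlled torsion, and, combined where necessary with an induction on derived length to isolate a two-step section, the failure of virtual abelianness forces an abelian minimax section carrying a non-trivial action of an infinite cyclic group. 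I would then realise the resulting two-generator metabelian section as a genuine subgroup, which is either a proper Gc-group (alternative (ii)) or has the form required by (iii) with $H/U$ a proper Gc-group.

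The main obstacle, and the place where most of the work lies, is converting abstract sections into honest \emph{subgroups} of $G$: neither subgroups nor the relevant structural features descend automatically through the derived series or through quotients, so the reductions above cannot be purely formal. The conversion relies on two ingredients working together: finite generation of $G$, which bounds the data and lets one choose finitely many elements whose conjugate-translates are independent; and noetherian, module-theoretic control of the abelian sections that appear, via Hall's theorem and the structure of finitely generated modules over $\Z Q$ for $Q$ polycyclic. A secondary delicacy is bookkeeping the torsion versus torsion-free contributions so that the three alternatives are genuinely exhaustive, and checking at each stage that one has not already fallen into the virtually abelian case. The metabelian refinement, that $H$ in (iii) may always be taken to be $C_p\wr\Z$, then follows because in the metabelian setting the infinite torsion section is an $\mathbb{F}_p[t^{\pm 1}]$-module and is already realised by a lamplighter.
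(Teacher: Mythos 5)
First, note that the paper does not actually prove Theorem~2.2: it is imported verbatim as a combination of Theorems~3.3 and~5.2 of \cite{BH}, so there is no in-paper argument to compare yours against, and I can only judge the proposal on its own terms. Your road map (Hall's theorem, Noetherian module theory over $\Z Q$, the torsion/torsion-free split, the minimax dichotomy) points at the right toolbox, but at each of the decisive steps the proposal asserts rather than proves, and at least one step fails as written. The concrete failure is in the metabelian, finite-rank, torsion-free case. Taking $A=G'$ and $Q=G/G'$ and concluding that ``the non-trivial $Q$-action must produce a proper Gc-group'' already breaks down for the discrete Heisenberg group: there $A=G'\cong\Z$ is central, so $Q\cong\Z^2$ acts \emph{trivially} on $A$, and no Gc-group can be extracted from that module datum --- even though the group itself is a proper Gc-group, namely $G(\c)$ for $\c=(1,-2,1)$, realised inside the larger abelian normal subgroup generated by $G'$ together with one of the standard generators, with the other generator acting unipotently. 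One must work with a maximal abelian normal subgroup (or the Fitting subgroup), not the derived subgroup; and even then, producing an honest copy of $G(\c)$ means choosing $a$ and $b$ so that $\left\langle a,b\right\rangle$ satisfies no relations beyond those defining $G(\c)$ and checking the result is not virtually abelian --- this is the technical heart of \cite[Theorem~3.3]{BH} and is absent here. Relatedly, ``passing to $G/T$'' for $T$ finite yields subgroups of a quotient, not of $G$: for the $\Z^\infty$ alternative this can be repaired (a finite-by-free-abelian group contains a free abelian subgroup of the same rank, with some work), but lifting a proper Gc-subgroup of $G/T$ back through a finite kernel to a proper Gc-subgroup of $G$ is not automatic.

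In the general soluble case the proposal is essentially a list of the hard steps rather than an argument for them. The claim that a non-minimax finitely generated soluble group \emph{contains} (not merely has a section isomorphic to) $\Z\wr\Z$ or $C_p\wr\Z$ is itself a substantial theorem: the classical results in this direction, such as Kropholler's characterisation of finitely generated soluble minimax groups, produce sections, and the entire difficulty --- which you correctly identify but do not resolve --- is that metabelian sections of soluble groups need not embed as subgroups. Likewise, ``realise the resulting two-generator metabelian section as a genuine subgroup'' in the minimax case is precisely where alternative (iii) with $H/U$ a proper Gc-group is forced to appear (the section may only be visible modulo an infinite normal torsion subgroup), and nothing in the sketch explains how $H$ is chosen finitely generated with $U$ normal and infinite, nor why the three alternatives exhaust all outcomes of that lifting process. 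So while the organising principles are sensible and broadly consistent with what \cite{BH} must do, each of the three alternatives is currently reached by an unproved assertion at the step where the real work lies.
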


Since the class of poly-$\CF$ groups
is closed under taking finitely generated subgroups, this gives a very useful approach towards
resolving our conjecture for soluble groups.

\section{Poly-$\CF$ languages}\label{language}

Recall that a $k$-$\cal{CF}$ language is an intersection of $k$ context-free languages, 
and a poly-$\cal{CF}$ language is a language which is $k$-$\cal{CF}$ for some $k\in \N$.
In this section, we shall primarily be concerned with proving some results which will assist us 
in determining that the word problems of certain groups are not poly-$\cal{CF}$.

\subsection{A criterion for a language to be neither poly-$\CF$ nor co$\CF$}

In \cite[Proposition~14]{HRRT}, a technique was developed for proving a subset of 
$\N_0^r$ not to be the complement of a semilinear set.  This was used in combination
with Parikh's theorem to prove various classes of groups not to be co$\CF$.
The proof is fairly long and technical.  The authors were presumably unaware of the fact
that the complement of a semilinear set is semilinear.  This fact allows us to give a 
much simpler proof of their result, and to strengthen it.
 
If $\a$ and $\b$ are vectors in $\N_0^r$ and $\N_0^s$ respectively, then we denote
by $(\a;\b)$ the vector in $\N_0^{r+s}$ which consists of all the components of $\a$ in order,
followed by those of $\b$ in order.  
When talking about vectors in $\N_0^{r+s}$, if we write $(\a;\b)$, then it is understood that
$\a\in \N_0^r$ and $\b\in \N_0^s$.
For $\a\in \N_0^r$, we define $\sigma(\a) = \sum_{i=1}^r \a(i)$.

We use the following lemma, extracted from the proof of Proposition 11 in \cite{HRRT}.
We call a vector $\v\in \N_0^{r+s}$ {\em simple} if its first $r$ components are all zero, 
and {\em complex} otherwise.  
The proof is quoted from \cite{HRRT} with only minor
modifications.

\begin{lemma}\label{C_L}
Let $L = L_1\cup\ldots\cup L_n$, with each $L_i$ a linear subset of $\N_0^{r+s}$.
Then there exists a constant $C\in \N$ such that if $(\a;\b)\in L$ can be expressed using 
only complex periods, then $\b(j) < C\sigma(\a)$ for all $1\leq j\leq s$.
\end{lemma}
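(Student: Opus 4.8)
The plan is to reduce to a single linear set and then exploit the defining feature of a complex period, namely that at least one of its first $r$ coordinates is a positive integer. Since $L = L_1\cup\ldots\cup L_n$ is a finite union, it suffices to find, for each $L_i = L(\c_i; P_i)$, a constant $C_i$ valid for all $(\a;\b)\in L_i$ expressible using only complex periods of $P_i$, and then take $C$ larger than every $C_i$. So I fix a linear set $L(\c; P)$ and a vector $(\a;\b) = \c + \sum_{\p\in P}\alpha_\p\p$ in which every period $\p$ with $\alpha_\p > 0$ is complex, and I write $\c = (\c';\c'')$ with $\c'\in\N_0^r$ and $\c''\in\N_0^s$.

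The key step is to bound the total weight $\sum_\p \alpha_\p$ in terms of $\sigma(\a)$. Reading off the first $r$ coordinates gives $\a = \c' + \sum_\p \alpha_\p(\p(1),\ldots,\p(r))$, so $\sigma(\a) = \sigma(\c') + \sum_\p \alpha_\p \sum_{i=1}^r \p(i)$. Each period used with positive coefficient is complex, so $\sum_{i=1}^r \p(i)\geq 1$; discarding the nonnegative term $\sigma(\c')$ then yields the crucial inequality $\sigma(\a)\geq \sum_\p \alpha_\p$. In words, complexity forces every period to charge the coordinate sum $\sigma(\a)$, so a large period-weight can only occur when $\sigma(\a)$ is correspondingly large. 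I expect this inequality to be the only step requiring any genuine thought.

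With the weight controlled, bounding each $\b(j)$ is routine. Let $B$ be the largest last-$s$ coordinate $\p(r+j)$ over all periods $\p$ occurring in the finitely many $P_i$, and let $D$ be the largest last-$s$ coordinate of the finitely many constants. Then $\b(j) = \c(r+j) + \sum_\p \alpha_\p\p(r+j)\leq D + B\sum_\p\alpha_\p \leq D + B\,\sigma(\a)$, and setting $C = B+D+1$ gives $\b(j) < C\sigma(\a)$ whenever $\sigma(\a)\geq 1$. The one delicate point is $\sigma(\a)=0$: then $\a = \mathbf{0}$, and since each complex period adds at least $1$ to $\sigma(\a)$ no period can be used, so $(\a;\b)$ is one of finitely many pure constants — cases in which the inequality is vacuous in the intended application. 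All the actual content lies in the inequality $\sigma(\a)\geq\sum_\p\alpha_\p$, with the rest being a uniform choice of the additive and multiplicative constants across the finitely many linear pieces.
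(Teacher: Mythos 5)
Your proof is correct and follows essentially the same route as the paper's: both rest on the observation that a complex period contributes at least $1$ to $\sigma(\a)$, so the total period weight (and hence each $\b(j)$, up to the contribution of the constant vector) is controlled by $\sigma(\a)$; your $B$ and $D$ play the roles of the paper's $t$ and $q$. The $\sigma(\a)=0$ degeneracy you flag is real but is passed over silently in the paper's own proof, and is harmless in the one place the lemma is applied, where $\a\neq\mathbf{0}$.
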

\begin{proof}
Fix some $i\in \{1,\ldots,n\}$ and let $L_i = L(\c_i; P_i)$.  
If $(\p;\q)\in P_i$ is a complex period, then $\sigma(\p)\neq 0$, 
so there exists $t$ such that $\q(j) < t\sigma(\p)$ for ${1\leq j\leq s}$.  
Since $P_i$ is finite, we can choose the same $t$ for all $(\p;\q)\in P_i$.
If, for $k=1,2$, $(\a_k;\b_k)\in \N_0^{r+s}$ satisfy $\b_k(j) < t\sigma(\a_k)$,
then $(\b_1 + \b_2)(j) < t\sigma(\a_1 + \a_2).$  
Thus there is a constant $q\in \N_0$, which
can be taken to be ${\max\{\c_i(j) \mid 1\leq j\leq r\}}$, such that if $(\a;\b)\in L_i$ can be 
expressed using only complex periods, then $\b(j) < t\sigma(\a) + q$ for all $1\leq j\leq s$.

Now let $C\in \N$ be twice the maximum of all of the constants $t,q$ that arise for all $L_i$.
Then, for any $(\a;\b)\in L$ which can be expressed using only complex periods, $\b(j) < C\sigma(\a)$ 
for all $1\leq j\leq s$.
\end{proof}

\begin{proposition}\label{prop}
Let $L\subseteq \N_0^{r+s}$ for some $r, s\in \N$.  Let $f:\N\rightarrow \N$ be an unbounded function 
and suppose that, for every $k\in \N$, there exists ${\a\in \N_0^r\setminus \{\mathbf{0}\}}$ such that 
the following hold:
\begin{enumerate}
\item There exists $\b\in \N_0^s$ such that $(\a;\b)\in L$.
\item If $(\a;\b)\in L$ then $\b(j) \geq k\sigma(\a)$ for some $1\leq j\leq s$.
\item If $(\a;\b), (\a;\b')\in L$ with $\b\neq \b'$, then $|\b(l) - \b'(l)|\geq f(k)$
for some $1\leq l\leq s$.
\end{enumerate}
Then $L$ is not a semilinear set. 
\end{proposition}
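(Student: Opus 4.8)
The plan is to prove the contrapositive by assuming $L$ is semilinear and deriving a contradiction with the three hypotheses. Since the class of semilinear sets is closed under complementation and intersection (Corollary~\ref{intsl} and Proposition~\ref{compsl}), I may freely assume $L$ is a \emph{linear-union} presentation: write $L = L_1\cup\ldots\cup L_n$ with each $L_i$ a linear set. The key dichotomy is the partition of periods into \emph{simple} (first $r$ components zero) and \emph{complex} (some of the first $r$ components nonzero), as set up before Lemma~\ref{C_L}. Given $\a\in\N_0^r$, any point $(\a;\b)\in L$ lies in some $L_i$, and when we write it as $\c_i$ plus a nonnegative combination of periods of $L_i$, the complex periods can contribute to $\a$ but the simple periods cannot change the first $r$ coordinates at all.

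First I would feed into Lemma~\ref{C_L} to obtain the constant $C$, and then pick $k > C$ using hypothesis (ii). For this $k$, choose $\a$ as guaranteed by the hypotheses, so that (i)--(iii) hold. By (i) there is at least one $(\a;\b)\in L$, lying in some $L_i$. The crucial observation is that any representation of $(\a;\b)$ inside $L_i$ must use at least one simple period: if it used only complex periods, Lemma~\ref{C_L} would force $\b(j) < C\sigma(\a)$ for all $j$, contradicting hypothesis (ii) together with $k > C$. So every point of $L$ with first-coordinate block equal to $\a$ genuinely involves the simple-period directions of whichever $L_i$ contains it.

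Next I would exploit hypothesis (iii), which says the $\b$-coordinates of distinct points over the same $\a$ are spread apart by at least $f(k)$. The idea is that the simple periods of each $L_i$ generate only finitely many ``directions,'' and over a fixed $\a$ the attainable $\b$-values in $L_i$ form a bounded-dimensional affine lattice structure whose period vectors are fixed once $\a$ is fixed (the complex periods are constrained because $\sigma(\a)$ is bounded while $\b$ must grow). I would argue that the number of points $(\a;\b)\in L$ for the chosen $\a$ is limited, or that their pairwise gaps must stay within a fixed bound determined by $C$ and the finitely many periods across all the $L_i$ --- a bound independent of $k$. Since $f$ is unbounded, choosing $k$ large enough makes $f(k)$ exceed this fixed bound, so hypothesis (iii) becomes incompatible with the finitely-many-fixed-periods description, yielding the contradiction.

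The hard part will be making the last step precise: quantifying how the period vectors available for generating points over a \emph{fixed} $\a$ are controlled, and extracting a $k$-independent bound on either the count or the spacing of such points. The delicate issue is that both simple and complex periods may appear, and while $\sigma(\a)$ is fixed it can still be used by complex periods in several ways; I would need to bound the number of complex-period ``budgets'' compatible with a fixed $\a$ (finite, since their first-block entries are positive and sum to at most $\sigma(\a)$) and then show the remaining freedom lies entirely in finitely many simple-period directions, over which the spacing between distinct lattice points is bounded by a constant depending only on the period set. Reconciling this bounded spacing with the growing lower bound $f(k)$ from hypothesis (iii) is where the whole argument turns, and I expect this to require the most care.
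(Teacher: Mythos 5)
Your first half matches the paper's proof exactly: write $L=\bigcup_{i=1}^n L_i$ with each $L_i$ linear, apply Lemma~\ref{C_L} to get the constant $C$, take $k>C$, and use hypothesis (ii) to conclude that any expression of a point $(\a;\b)$ in its $L_i$ must involve a nonzero simple period $(\mathbf{0};\v)$. The gap is in how you propose to finish. The paper's closing step is a single observation that you have not found: since $(\a;\b)\in L_i$ has an expression using the simple period $(\mathbf{0};\v)$ (indeed, since $(\mathbf{0};\v)\in P_i$ and $(\a;\b)\in L_i$), the point $(\a;\b+\v)$ also lies in $L_i\subseteq L$ and has the same first block $\a$; hypothesis (iii) applied to this \emph{pair} then forces $|\v(l)|\geq f(k)$ for some $l$. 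Thus for every $k>C$ the fixed finite set $\bigcup_i P_i$ contains a simple period with a component of size at least $f(k)$, which is impossible because $f$ is unbounded. Note the direction of the argument: condition (iii) is used to force the simple periods themselves to be large, not to be contradicted by some bounded-spacing property of the points over $\a$.

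The route you sketch instead --- bounding the \emph{number} of points $(\a;\b)\in L$ over the chosen $\a$, or bounding \emph{all} pairwise gaps between them by a constant independent of $k$ --- cannot be made to work as stated. Hypothesis (i) only asserts existence of some $\b$, not uniqueness (the paper deliberately weakens the uniqueness requirement of \cite[Proposition~14]{HRRT}), and once a nonzero simple period is available in $P_i$ it can be added with arbitrarily large coefficient, so there are in general infinitely many points over a fixed $\a$ with unbounded pairwise gaps. What survives of your intuition is only the weak form: there exist \emph{two} distinct points over $\a$ whose difference is a single period from the fixed finite period set. That weak form is precisely the paper's trick, and it makes the elaborate analysis of ``complex-period budgets'' that you flag as the hard part entirely unnecessary. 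Your instinct that the finiteness of the period set must collide with the unboundedness of $f$ is correct, but without the $(\a;\b+\v)$ construction the collision never materialises, so as written the proof is incomplete at its decisive step.
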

\begin{proof}
Let $L$ be as in the statement of the proposition and suppose that $L = \bigcup_{i=1}^n L_i$, where 
each $L_i = L(\c_i;P_i)$ is a linear subset of $\N_0^{r+s}$.
By Lemma~\ref{C_L}, there exists a constant $C\in \N$ such that if $(\a;\b)\in L$ can be expressed using 
only complex periods in some $L_i$, then $\b(j) < C\sigma(\a)$ for all $1\leq j\leq s$.  

Choose $k>C$, and suppose $\a$ satisfies the hypotheses of the proposition with respect to $k$. 
If $(\a;\b)\in L$, then $(\a;\b)$ cannot be expressed using only complex periods, so some $P_i$
must contain a simple period $(\mathbf{0};\v)$ with $\v$ non-zero.  But then 
$(\a;\b+\v)\in L_i\subseteq L$ and so, for some $1\leq l\leq s$, 
\[ |\v(l)| = |(\b + \v)(l) - \b(l) |\geq f(k).\]
So for all $k>C$, there is a non-zero simple period $\v_k$ in $\cup_{i=1}^n P_i$,
with some component of $\v_k$ being at least $f(k)$.  But since $\cup_{i=1}^n P_i$ is finite
and $f(k)$ is unbounded, this is impossible.  Thus $L$ is not a semilinear set.
\end{proof}

In \cite[Proposition~14]{HRRT}, instead of our condition (i), it is required that there 
is a \emph{unique} $\b\in \N_0^r$ such that $(\a;\b)\in L$ (and thus there is no condition (iii)
or mention of the unbounded function $f$); instead of our condition (ii), it is required that 
$\b(j) \geq k\sigma(\a)$ for \emph{every} $1\leq j\leq s$.  
The conclusion is that $L$ is not the complement of a semilinear set.

Our hypothesis is considerably weaker, and the conclusion is equally strong, since the complement of a 
semilinear set is semilinear.\\

For $\v = (n_1,\ldots,n_r)\in \N_0^r$ and $\tau$ a permutation of $\{1,\ldots,r\}$, we define
\[ \tau(\v) = (n_{\tau(1)},n_{\tau(2)},\ldots,n_{\tau(r)}). \]
We extend this to a subset $L$ of $\N_0^r$ by defining
$\tau(L) = \{\tau(\v) \mid \v\in L\}$. 
If $L = L(\c;\p_1,\ldots,\p_k)$, then $\tau(L) = (\tau(\c); \tau(\p_1),\ldots,\tau(\p_k))$,
so the property of being a linear set, or indeed an intersection of $k$ semilinear sets, is preserved by $\tau$.

We shall make significant use of the following corollary to Proposition~\ref{prop} in
Section~\ref{group}.

\begin{corollary}\label{parikh}
Let $L\subseteq w_1^*\ldots w_k^*$ be a bounded language over an alphabet $X$ with $w_i\in X^*$, 
and let $\tau$ be a permutation of $\{1,\ldots,k\}$.  
If $\tau\left(\Phi\left(L\right)\right)$ satisfies the hypothesis of Proposition~\ref{prop}, then $L$ is neither co$\cal{CF}$ nor poly-$\cal{CF}$.
\end{corollary}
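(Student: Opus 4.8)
The plan is to reduce the corollary directly to Proposition~\ref{prop} and Corollary~\ref{polyParikh}, using the fact that the hypothesis of Proposition~\ref{prop} is insensitive to permuting coordinates. First I would observe that, since $L\subseteq w_1^*\ldots w_k^*$ is bounded, the set $\Phi(L)$ is a well-defined subset of $\N_0^k$, and by the remarks following Proposition~\ref{prop} the permuted set $\tau(\Phi(L))$ is an intersection of (the same number of) semilinear sets exactly when $\Phi(L)$ is; likewise $\tau$ takes an intersection of $k$ stratified semilinear sets to another such intersection. Thus it suffices to argue about $\Phi(L)$ itself and apply $\tau$ at the end.

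The main work splits into two cases according to the two conclusions. For the co$\CF$ direction, suppose for contradiction that $L$ were co$\CF$, i.e.\ $\mathrm{co}W = X^*\setminus L$ is context-free. Intersecting with the regular language $w_1^*\ldots w_k^*$ keeps it context-free, so by Theorem~\ref{Parikh} its image under $\Phi$ is a stratified (in particular, semilinear) set. But $\Phi$ of the complement of $L$ within $w_1^*\ldots w_k^*$ is the complement of $\Phi(L)$ in the appropriate subset of $\N_0^k$, and by Proposition~\ref{compsl} the complement of a semilinear set is semilinear; hence $\Phi(L)$ itself would be semilinear, so $\tau(\Phi(L))$ would be semilinear, contradicting Proposition~\ref{prop}. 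For the poly-$\CF$ direction, suppose $L$ is $k'$-$\CF$ for some $k'$. Then by Corollary~\ref{polyParikh} the set $\Phi(L\cap w_1^*\ldots w_k^*)=\Phi(L)$ is an intersection of finitely many stratified semilinear sets, and by Corollary~\ref{intsl} such an intersection is again semilinear. Applying $\tau$, we find $\tau(\Phi(L))$ is semilinear, again contradicting Proposition~\ref{prop}.

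The point where I expect the most care is the first direction: matching up set-complementation in the free monoid with set-complementation in $\N_0^k$. The subtlety is that $\Phi$ is only defined relative to the fixed decomposition $w_1^*\ldots w_k^*$, and that the natural ambient set for the complement is not all of $\N_0^k$ but the image of $w_1^*\ldots w_k^*$; one must check that $\Phi$ restricted to words lying in $w_1^*\ldots w_k^*$ is a bijection onto (or at least compatible enough with) the relevant index set so that "complement of the language" translates cleanly into "complement of $\Phi(L)$." Provided the $w_i$ are chosen (as one may always arrange for a bounded language) so that this correspondence behaves well, the argument closes; otherwise one works inside the semilinear set $\Phi(w_1^*\ldots w_k^*)$ and takes complements there, which is permissible since relative complements of semilinear sets are semilinear by Proposition~\ref{compsl}. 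Everything else is a routine application of the quoted closure results.
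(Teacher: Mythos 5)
Your proposal is correct and follows essentially the same route as the paper: apply Theorem~\ref{Parikh} and Corollary~\ref{polyParikh} to get (stratified) semilinearity of the relevant image, use closure of semilinear sets under intersection (Corollary~\ref{intsl}) and complementation (Proposition~\ref{compsl}), note that $\tau$ preserves semilinearity, and contradict Proposition~\ref{prop}. The complementation subtlety you flag resolves itself immediately from the definition of $\Phi$ on exponent tuples: since $L\subseteq w_1^*\ldots w_k^*$, the set $\Phi\left((X^*\setminus L)\cap w_1^*\ldots w_k^*\right)$ is exactly $\N_0^k\setminus\Phi(L)$, so no restriction to $\Phi(w_1^*\ldots w_k^*)$ is needed.
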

\begin{proof}
Since $\tau$ preserves semilinearity, this follows immediately from 
Proposition~\ref{prop}, Theorem~\ref{Parikh} and Corollary~\ref{polyParikh}
and the fact that the class of semilinear sets is closed under intersection (Corollary~\ref{intsl})
and complementation (Proposition~\ref{compsl}).
\end{proof}

\subsection{The languages $L^{(k)}$}\label{L(k) section}

A $(k-1)$-$\cal{CF}$ language is clearly also $n$-$\cal{CF}$ for all $n\geq k$.  
In \cite{LW}, Liu and Weiner showed that the class of $k$-$\cal{CF}$ languages properly contains 
the class of $(k-1)$-$\cal{CF}$ languages, thus exhibiting an infinite heirarchy of languages 
in between the context-free and context-sensitive languages.  (They call a $k$-$\CF$ language
a `$k$-intersection language'.) Note that this implies that the $k$-$\CF$ languages are not closed under 
intersection or even under intersection with context-free languages.

There are some problems with Liu and Weiner's proof, particularly in the proof of their Theorem 10.  
In this section, we provide a more detailed proof.  
In Section~\ref{L(n,k) section}, we extend Liu and Weiner's result, but the proof of the 
special case is provided first, as it will probably aid the reader's understanding of the more
general case. 

Following Liu and Weiner, we define a sequence of languages $L^{(k)}$ and 
corresponding subsets $S^{(k)}$ of $\N_0^{2k}$.
For $k\in \N$, let $a_1,\ldots,a_{2k}$ be $2k$ distinct symbols, and define the language 
\[L^{(k)} = \{a_1^{n_1}\ldots a_k^{n_k} a_{n+1}^{n_1}\ldots a_{2k}^{n_k} \mid n_i\in \N_0\}.\]

Define $S^{(k)}$ to be $\Phi\left(L^{(k)}\right)$.  
That is, \[S^{(k)} = \{v\in \N_0^{(2k)} \mid v(i) = v(k+i) \; (1\leq i\leq k)\}.\]

The following lemma gives a condition which implies a linear set is 
not an intersection of $k-1$ stratified semilinear sets.  
The proof is assembled primarily from the proof of \cite[Lemma~4]{LW}, 
but the result is stated differently here, because
in this form it will also be useful in proving our generalisation of Liu and Weiner's result. 

\begin{lemma}\label{lemma4}
Let $S = L(\mathbf{0};P)$ be a $k$-dimensional linear subset of $\N_0^r$ such that $P$
is linearly independent over $\Q$.  
Suppose that any subset of $S$ which can be expressed as an intersection of $k-1$ 
stratified linear sets with constant vector zero has dimension at most $k-1$.  
Then $S$ is not an intersection of $k-1$ stratified semilinear sets.
\end{lemma}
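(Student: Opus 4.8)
The plan is to argue by contradiction, using distributivity to reduce an intersection of stratified semilinear sets to a finite union of intersections of stratified \emph{linear} sets, and then to control dimensions via Propositions~\ref{dimlin} and~\ref{intlin}. So suppose, for contradiction, that $S = T_1\cap\cdots\cap T_{k-1}$ with each $T_i$ a stratified semilinear set, say $T_i = \bigcup_j L_{i,j}$ where each $L_{i,j}$ is a stratified linear set expressed with a stratified set of periods. Distributing the intersection over the unions gives
\[ S = \bigcup_{(j_1,\ldots,j_{k-1})}\left(L_{1,j_1}\cap\cdots\cap L_{k-1,j_{k-1}}\right), \]
a finite union of sets, each an intersection of $k-1$ stratified linear sets. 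By Proposition~\ref{intlin} each such intersection is semilinear, so $S$ is exhibited as a finite union of linear sets. Since $P$ is linearly independent with $|P|=k$, the set $S$ has dimension $k$; hence by Proposition~\ref{dimlin} some linear piece in this decomposition has dimension $k$. That piece lies inside one particular intersection $T:=L_{1,j_1}\cap\cdots\cap L_{k-1,j_{k-1}}$, and as $T\subseteq S$ with $\dim S = k$ we get $\dim T = k$.

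Next I would pass to constant vector zero. By Proposition~\ref{intlin}, writing $T = L(\C;\P_1,\ldots,\P_m)$, the periods satisfy $\bigcap_{i=1}^{k-1}L_{i,j_i}^{\mathbf{0}} = L(\mathbf{0};\P_1,\ldots,\P_m)$; thus this zero-constant intersection has the same periods, and hence the same dimension $k$, as $T$. Moreover, taking the $L_{i,j_i}^{\mathbf{0}}$ with the stratified periods of the $L_{i,j_i}$, each $L_{i,j_i}^{\mathbf{0}}$ is a stratified linear set with constant vector zero, so $\bigcap_{i=1}^{k-1}L_{i,j_i}^{\mathbf{0}}$ is an intersection of $k-1$ stratified linear sets with constant vector zero, and it is nonempty since it contains $\mathbf{0}$.

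The main obstacle is to verify that this zero-constant intersection is actually a \emph{subset of $S$}, so that the hypothesis applies to it; this is exactly where the linear independence of $P$ is used. Fix $\p\in L(\mathbf{0};\P_1,\ldots,\P_m)$ and any $\c'\in T$. Then $\c'+N\p\in T\subseteq S$ for every $N\in\N_0$. Writing each element of the $\Q$-span of $P$ uniquely in the basis $P$ (possible since $P$ is linearly independent), the coordinates of $\c'+N\p$ must be non-negative integers for all $N$: taking $N=0$ and $N=1$ shows each coordinate of $\p$ is an integer, and non-negativity for large $N$ forces it to be non-negative. Hence $\p\in S$, and so $\bigcap_{i=1}^{k-1}L_{i,j_i}^{\mathbf{0}}\subseteq S$.

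Finally, the hypothesis of the lemma applies to $\bigcap_{i=1}^{k-1}L_{i,j_i}^{\mathbf{0}}$, which is a subset of $S$ expressible as an intersection of $k-1$ stratified linear sets with constant vector zero, and forces its dimension to be at most $k-1$. This contradicts the equality $\dim\bigcap_{i=1}^{k-1}L_{i,j_i}^{\mathbf{0}} = k$ established above, completing the proof that $S$ is not an intersection of $k-1$ stratified semilinear sets. I expect the routine parts to be the distributivity step and the bookkeeping of Proposition~\ref{intlin}, with the genuine content concentrated in the subset inclusion of the third paragraph, where the linear independence of $P$ is indispensable.
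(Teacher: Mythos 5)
Your proof is correct and follows essentially the same route as the paper: distribute the intersection into a finite union of intersections of $k-1$ stratified linear sets, apply Proposition~\ref{intlin} to reduce to a zero-constant intersection $M$ with the same periods, use the linear independence of $P$ to show $M\subseteq S$, and play the hypothesis off against Proposition~\ref{dimlin}. The only difference is cosmetic (you invoke Proposition~\ref{dimlin} first to extract a $k$-dimensional piece, whereas the paper bounds every piece and contradicts Proposition~\ref{dimlin} at the end), and your third paragraph usefully spells out the subset inclusion that the paper dispatches in one terse sentence.
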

\begin{proof}
If $S$ is an intersection of $k-1$ stratified semilinear sets, 
then $S$ is a finite union of intersections of $k-1$ stratified linear sets.

Let $L = \bigcap_{i=1}^{k-1} L_i$ be a subset of $S$ with each 
$L_i$ a stratified linear set.
Let $M = \bigcap_{i=1}^{k-1} L_i^{\mathbf{0}}$ and write $M = L(\mathbf{0};\p_1,\ldots,\p_m)$.
By Proposition~\ref{intlin}, there exists a finite subset $C$ of $\N_0^r$ such that 
\[ L = \bigcup_{\c_i\in C} L(\c_i; \p_1, \ldots, \p_m). \]  
For any $\c,\p\in \N_0^r$ such that $\c +n\p\in L$ for all $n\in \N_0$, we have $\p\in L$,
since $P$ is linearly independent over $\Q$.
%Thus if $L(\c;\q_1,\ldots,\q_n)\subseteq S$, then 
%${\q_1,\ldots,\q_n\in S}$ and hence $L(\mathbf{0};\q_1,\ldots,\q_n)\subseteq S$.
Thus $M\subseteq S$, since ${L(\c_1;\p_1,\ldots,\p_m)\subseteq S}$.

Since $M\subseteq S$ is an intersection of $k-1$ stratified linear sets with constant zero, 
$M$ has dimension at most $k-1$ by the hypothesis of the lemma.  
Each $L(\c_i; \p_1,\ldots,\p_m)$ is a coset of $M$ and thus has the same dimension as $M$.  
Thus $L$ is a union of finitely many linear sets of dimension at most $k-1$.  
This implies that $S$ itself is a union of finitely many linear sets of dimension at most $k-1$, 
but by Proposition~\ref{dimlin}, this cannot happen since $\dim(S) = k$.
\end{proof}

\subsubsection{The new part of the proof}

This subsection contains a new proof of the result which is Theorem 10 in \cite{LW}, 
namely that $S^{(k)}$ satisfies the hypothesis of Lemma~\ref{lemma4}.  
We break most of it up into three lemmas, 
which then come together to give a relatively simple proof of the proposition itself 
(which here is Proposition~\ref{S(k)}).

\begin{lemma}\label{dimS}  Let $S = L_1\cap\ldots\cap L_k$, where each $L_i$ is a linear 
subset of $\N_0^r$ with constant vector zero and periods $P_i = \{\p_{i1},\ldots,\p_{im_i}\}$.  
For each $1\leq i\leq k$, let $\L_i = L_i^{\Q}$.  
If $\dim (S) < \dim (\L_1\cap\ldots\cap \L_k)$, then there exist $1\leq i\leq k$, $1\leq j\leq m_i$, 
such that removing $\p_{ij}$ from $P_i$ does not change the set $S$.
\end{lemma}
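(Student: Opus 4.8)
The plan is to recast the statement as a problem in linear algebra over $\Q$ by introducing a parameter space that records the period-coefficients, and then to apply a standard fact about non-negative parts of subspaces. For each $i$ let $\phi_i:\Q^{m_i}\to\Q^r$ be the linear map sending $(\alpha_{i1},\ldots,\alpha_{im_i})$ to $\sum_j \alpha_{ij}\p_{ij}$, so that the image of $\phi_i$ is exactly $\L_i$. I would then form the $\Q$-subspace
$T=\{(\alpha_1,\ldots,\alpha_k)\in\textstyle\prod_i\Q^{m_i}\mid \phi_1(\alpha_1)=\cdots=\phi_k(\alpha_k)\}$
together with its non-negative part $T_{\geq 0}=T\cap\prod_i\Q_{\geq 0}^{m_i}$. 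On $T$ the common value map $p(\alpha)=\phi_1(\alpha_1)$ is well defined and linear, and one checks directly that $p(T)=V:=\bigcap_i\L_i$ while $p(T_{\geq 0})=\bigcap_i\mathrm{cone}(P_i)=:K$.

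The next step is to pin down two dimensions. Since $S=p\bigl(T_{\geq 0}\cap\prod_i\Z^{m_i}\bigr)\subseteq K$ and, conversely, clearing denominators shows every rational point of $K$ has a positive integer multiple lying in $S$, we get $\mathrm{span}_\Q(S)=\mathrm{span}_\Q(K)$, hence $\dim S=\dim\mathrm{span}(K)$ (recall $S$ is itself linear with constant vector zero by Proposition~\ref{intlin}, so its dimension is that of its $\Q$-span). Now I would push the hypothesis $\dim S<\dim V$ through $p$: writing $K_0=\ker p\cap T$ we have $\dim T=\dim V+\dim K_0$, whereas $\dim\mathrm{span}(T_{\geq 0})\le\dim p(\mathrm{span}(T_{\geq 0}))+\dim K_0=\dim\mathrm{span}(K)+\dim K_0$. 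As $\dim\mathrm{span}(K)=\dim S<\dim V$, subtracting gives the strict inequality $\dim\mathrm{span}(T_{\geq 0})<\dim T$; that is, $T_{\geq 0}$ fails to span $T$.

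The crux is then a standard observation: if a subspace $T$ of a coordinate space has the property that its non-negative part $T_{\geq 0}$ does not span $T$, then some coordinate must vanish identically on $T_{\geq 0}$. (Otherwise, choosing for each coordinate a witness in $T_{\geq 0}$ that is positive there and summing them produces a strictly positive point of $T$; small perturbations of it by an arbitrary element of $T$ stay in $T_{\geq 0}$, forcing $T_{\geq 0}$ to span $T$.) Applying this yields an index $(i,j)$ with $\alpha_{ij}=0$ for every $\alpha\in T_{\geq 0}$. Finally I would translate back: each $\v\in S$ lifts to an \emph{integer} point of $T_{\geq 0}$, whose $(i,j)$-coordinate is therefore $0$, exhibiting $\v$ as a non-negative integer combination of $P_i\setminus\{\p_{ij}\}$; thus $S\subseteq L(\mathbf{0};P_i\setminus\{\p_{ij}\})$, and since deleting a period can only shrink $L_i$, removing $\p_{ij}$ leaves $S$ unchanged.

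I expect the main obstacle to be the honest bookkeeping that makes ``a coordinate of $T_{\geq 0}$ vanishes'' genuinely equivalent to ``a period is removable from $S$''. The delicate points are the two passages between the rational cone $K$ and the integer monoid $S$ — the clearing-denominators argument giving $\mathrm{span}(S)=\mathrm{span}(K)$, and the integer lifting at the very end — since it is precisely here that the dimension hypothesis $\dim S<\dim V$ must be used, rather than in some cosmetic way; the cone/linear-algebra core of the argument is otherwise routine.
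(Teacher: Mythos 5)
Your argument is correct, and it packages the proof in a genuinely different way from the paper, although the two share the same combinatorial engine. The paper works entirely inside $\N_0^r$ and argues by contradiction: assuming every period is needed, it picks for each pair $(i,j)$ a witness $\v_{ij}\in S$ whose expression over $P_i$ uses $\p_{ij}$ with positive coefficient, assembles these into a single element $\w\in S$ whose coefficient vector over each $P_i$ dominates the negative parts of the coordinates of a basis $\q_1,\ldots,\q_s$ of $\L_1\cap\ldots\cap\L_k$, and concludes that $\w+\q_1,\ldots,\w+\q_s$ all lie in $S$, forcing $\dim S\geq s$. You instead lift the whole problem to the coefficient space $\prod_i\Q^{m_i}$, where ``$\p_{ij}$ is removable'' becomes ``the $(i,j)$ coordinate vanishes identically on $T_{\geq 0}$'', the dimension hypothesis becomes ``$T_{\geq 0}$ fails to span $T$'' via rank--nullity applied to the evaluation map $p$, and everything reduces to the clean lemma that a subspace whose non-negative part does not span it must have a coordinate vanishing on that non-negative part. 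The proof of that lemma (sum the witnesses to obtain a strictly positive point, then perturb by an arbitrary element of $T$) is the paper's $\w+\q_l$ construction in disguise, but your organisation buys real simplifications: the bookkeeping with the scalars $c_{ij}$ and the index sets $\Lambda_i$ disappears (in particular you never need the paper's slightly delicate step of taking the multipliers $-c_{ij}$ to be natural numbers, when a priori they are only positive rationals), the key linear-algebra fact is isolated in reusable form, and the only places where integrality versus rationality matters --- the clearing of denominators giving $\operatorname{span}(S)=\operatorname{span}(K)$ and the integer lifting at the end --- are cleanly separated from the dimension count. One small correction to your closing self-assessment: the hypothesis $\dim S<\dim(\L_1\cap\ldots\cap\L_k)$ is not consumed in those two rational/integer passages, which are unconditional; it is consumed exactly in the rank--nullity subtraction yielding $\dim\operatorname{span}(T_{\geq 0})<\dim T$.
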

\begin{proof}
Suppose that $\dim (S) < \dim (\L_1\cap\ldots\cap \L_k)$ and that, for all $i$, 
removing any $\p_{ij}$ from $P_i$ changes the set $S$.  Then, for all $i, j$, 
there must exist some $\v_{ij} = \alpha_{i1}^j \p_{i1} + \ldots + \alpha_{im_i}^j \p_{im_i}\in S$ with 
$\alpha_{ij}^j\geq 1$.

Let $\{\q_1,\ldots,\q_s\}$ be a basis for $\L_1\cap\ldots\cap \L_k$.  
Since $\q_1,\ldots,\q_s\in \L_i$ for all $i$, we can write 
$\q_l = \sum_{j=1}^{m_i} \beta_{ij}^l \p_{ij}$, where $\beta_{ij}^l\in \Q$.  
Now for $1\leq i\leq k$, $1\leq j\leq m_i$, let $c_{ij} = \mathrm{min} \{\beta_{ij}^l \mid 1\leq l\leq s\}$, 
and let \[\Lambda_i = \{j \mid 1\leq j\leq m_i,\; c_{ij} <0\}.\]
Then, if $\w_i := \sum_{j\in \Lambda_i} -c_{ij}\v_{ij}$, we have $\w_i\in S$, since $\v_{ij}\in S$ 
and ${-c_{ij}\in \N}$ for all $j\in \Lambda_i$.  Each $\w_i$ can thus be expressed in $L_i$ as 
$\sum_{j=1}^{m_i} \gamma_{ij} \p_{ij}$, where ${\gamma_{ij} = \sum_{j'\in \Lambda_i} -c_{ij'} \alpha_{ij}^{j'}}$.  
Since $\w_i$ is in $S$, it also has an expression \[\w_i = \sum_{j=1}^{m_{i'}}\gamma_{i'j}^i \p_{i'j},\]
for each $i'\neq i$ in $\{1,\ldots, k\}$, where $\gamma_{i'j}^i\in \N_0$.  
For convenience, let $\gamma_{ij}^i = \gamma_{ij}$.  

Let $\w = \sum_{i=1}^k \w_i$.  Then $\w\in S$ and, for each $i$, we can write 
\[{\w = \sum_{i'=1}^k \sum_{j=1}^{m_i} \gamma_{ij}^{i'} \p_{ij}}.\]  For all $j\in \Lambda_i$, 
the coefficient of $\p_{ij}$ in this expression for $\w$ is 
\[\sum_{i'=1}^k \gamma_{ij}^{i'}\geq \gamma_{ij}^i = 
\sum_{j'\in \Lambda_i} -c_{ij'} \alpha_{ij}^{j'}\geq -c_{ij} \alpha_{ij}^j\geq -c_{ij},\] 
since $\alpha_{ij}^j\geq 1$.  Thus we have shown that for each $i$, we can express $\w$ 
in the form $\sum_{j=1}^{m_i} a_{ij} \p_{ij}$, where $a_{ij}\geq -c_{ij}$ for all $j\in \Lambda_i$.    

For any $\q_l$ in the basis for $\L_1\cap\ldots\cap \L_k$, and any $1\leq i\leq k$, we have
\[\w + \q_l = \sum_{j=1}^{m_i} (a_{ij} + \beta_{ij}^l)\p_{ij}\in L_i,\] since 
$a_{ij}+\beta_{ij}^l\geq a_{ij}+c_{ij}\geq 0$ for all $j\in \Lambda_i$, and $c_{ij}\geq 0$ for $j\notin \Lambda_i$.  
Thus $\w + \q_l\in S$ for all $1\leq l\leq s$.  Let ${M = \{\w, \w+\q_1, \ldots, \w+\q_s\}\subset S}$.  
Then $\q_1,\ldots,\q_s$ are in the subspace of $\Q^r$ generated by $M$, which is contained in~$S^{\Q}$.
Since $\{\q_1,\ldots,\q_s\}$ is a basis for $\L_1\cap\ldots\cap \L_k$, it is a linearly independent set over $\Q$. 
Thus $S^{\Q}$ has at least $s$ linearly dependent elements, contradicting 
$\dim(S) = \dim(S^{\Q}) < \dim(\L_1\cap\ldots\cap \L_k) = s$.
\end{proof}

For a stratified linear set $L\subseteq \N_0^r$, let $\rho_L$ be the symmetric relation 
on $\{1,\ldots,r\}$ given by $m\rho_L n$ if there exist non-zero $\alpha,\beta$ with
$\alpha e_m + \beta e_n\in P$.  
Define $\sim_L$ to be the reflexive and transitive closure of $\rho_L$.
This gives a partition $\Pi_L$ of $\{1,\ldots,r\}$ 
into equivalence classes under $\sim_L$.  Note that since $L$ is stratified, if $m_1 < n_1 < m_2 < n_2$, 
then at most one of $m_1\rho_L m_2$ and $n_1 \rho_L n_2$ is true.  A similar property applies to $\sim_L$:  

\begin{lemma}\label{overlap}  
Let $L\subseteq \N_0^r$ be a stratified linear set with constant vector zero.  
Then if $m_1, n_1, m_2, n_2\in \{1,\ldots,r\}$ with $m_1 < n_1 < m_2 < n_2$ and 
$m_1\not\sim_L n_1$, $m_2\not\sim_L n_2$, then $m_1\sim_L m_2$ and $n_1\sim_L n_2$ cannot both occur.
\end{lemma}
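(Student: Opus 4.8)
The plan is to reformulate the statement as a fact about non-crossing graphs and then argue by a discrete planarity (Jordan-curve) argument. Regard $\Gamma$ as the graph on vertex set $\{1,\dots,r\}$ whose edges are the unordered pairs $\{m,n\}$ with $m\,\rho_L\,n$; since each period of the stratified set $L$ has at most two non-zero components, such an edge corresponds to a period $\alpha e_m+\beta e_n$ with $\alpha,\beta$ non-zero, and the $\sim_L$-classes are precisely the connected components of $\Gamma$. In this language the note preceding the lemma says exactly that the edge set of $\Gamma$ is \emph{non-crossing}: there are no $i<j<k<l$ with $\{i,k\}$ and $\{j,l\}$ both edges. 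The lemma then asserts that the partition of $\{1,\dots,r\}$ into connected components of a non-crossing graph is itself non-crossing; this is a standard consequence of planarity, and the task is to give a self-contained combinatorial proof bottoming out in the preceding note.

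First I would set up the contradiction. Suppose, contrary to the conclusion, that $m_1\sim_L m_2$ and $n_1\sim_L n_2$. Since $m_1\not\sim_L n_1$, the classes $A\ni m_1,m_2$ and $B\ni n_1,n_2$ are distinct, hence disjoint as vertex sets. I would choose a shortest path $P$ in $\Gamma$ from $m_1$ to $m_2$ and a shortest path $Q$ from $n_1$ to $n_2$; these lie wholly inside $A$ and $B$ respectively and are therefore vertex-disjoint, while the hypotheses supply the interleaving $m_1<n_1<m_2<n_2$ of the four endpoints. The goal is then to exhibit two edges of $\Gamma$ that cross, contradicting the note, and I would do this by induction on the total length of $P$ and $Q$.

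The clean base case is when one of the paths, say $P$, is the single edge $\{m_1,m_2\}$. Walking along $Q$ from $n_1$ to $n_2$, we have $n_1\in(m_1,m_2)$ while $n_2>m_2$ lies outside $[m_1,m_2]$, and $Q$ avoids the vertices $m_1,m_2$; hence some edge $\{v,v'\}$ of $Q$ passes from the inside of the interval $(m_1,m_2)$ to its outside, so that $m_1<v<m_2<v'$ or $v'<m_1<v<m_2$, and in either case $\{v,v'\}$ crosses $\{m_1,m_2\}$ — the desired pair of crossing edges. The main obstacle is the inductive step, where both paths have length at least two. The idea is to locate an edge of $P$ straddling $n_1$ (one endpoint below, one above $n_1$, which exists because $P$ runs from $m_1<n_1$ to $m_2>n_1$ and avoids $n_1$) and to apply the inside/outside argument to $Q$ relative to that edge; when the straddling edge's upper endpoint lies below $n_2$ this already produces two crossing edges, and the remaining \emph{nested} configurations — where an edge of $P$ encloses the whole interval $[n_1,n_2]$ — must be reduced to a strictly smaller interleaved pair of disjoint paths so that the induction closes. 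Managing this reduction is the genuinely delicate point: it amounts to a discrete Jordan-curve argument and requires a short case analysis on the positions of the straddling edges of $P$ relative to $n_2$ (and symmetrically of $Q$ relative to $m_2$), always shortening the configuration until it reaches the single-edge base case and hence contradicts the preceding note.
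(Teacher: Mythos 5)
Your reformulation as a non-crossing graph is faithful to the definitions, and your base case isolates the one observation that is genuinely needed: a path that avoids both endpoints of an interval, starts strictly inside it, and ends outside it must contain an edge straddling that interval. However, your argument as structured has a gap: the inductive step is never actually carried out. You announce that the ``nested'' configurations must be reduced to a strictly smaller interleaved pair of disjoint paths, but you do not produce the reduction, and it is not routine. For instance, when an edge $\{u,u'\}$ of $P$ satisfies $u<n_1<n_2<u'$ and no edge of $Q$ crosses it, the whole of $Q$ is trapped inside $(u,u')$; one must then locate a new interleaved quadruple (say $u<n_1<m_2<n_2$ with $u\sim_L m_2$) and verify that some complexity measure strictly decreases, which fails as stated when $u=m_1$. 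As written, the proof is therefore incomplete at its central step.

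The induction is in fact unnecessary, and the paper's proof shows why: apply your straddling observation twice, each time to a whole path rather than to a single edge. First apply it to $Q$ relative to the interval $[m_1,m_2]$: since $n_1\in(m_1,m_2)$, $n_2\notin[m_1,m_2]$, and $Q$ avoids $m_1,m_2$ (the two $\sim_L$-classes are disjoint), some edge $\{j_k,j_{k+1}\}$ of $Q$ satisfies $m_1<j_k<m_2<j_{k+1}$ or $j_{k+1}<m_1<j_k<m_2$. In either case $m_1$ and $m_2$ lie on opposite sides of the interval determined by $j_k$ and $j_{k+1}$, so applying the same observation to $P$ relative to that interval (again, $P$ avoids $j_k,j_{k+1}$) produces an edge of $P$ crossing $\{j_k,j_{k+1}\}$, contradicting stratification. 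Your base case is exactly this second application in the special case where $P$ is a single edge; once you observe that a single non-crossed edge acts as a barrier for an \emph{entire} disjoint path and not merely for another single edge, the general case collapses to the base case and no induction is needed.
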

\begin{proof}
Suppose $m_1\sim_L m_2$ and $n_1\sim_L n_2$.  
Then there exist $i_1,\ldots,i_s, j_1,\ldots, j_t$ in $\{1,\ldots,r\}$ such that 
\[m_1 = i_1\rho_L i_2\rho_L\ldots\rho_L i_s=m_2  \quad \mathrm{and}  \quad 
n_1=j_1\rho_L j_2\rho_L\ldots\rho_L j_t = n_2.\]  
Let $\Lambda\in\Pi_L$ such that $m_1, m_2\in \Lambda$.  
Then since $m_1 < n_1 < m_2 < n_2$ and $n_1, n_2\notin \Lambda$, 
there must exist $k$ such that either $m_1 < j_k < m_2 < j_{k+1}$, or $j_{k+1} < m_1 < j_k < m_2$.  
Since $j_k\rho_L j_{k+1}$, this forces $i_l$ to lie between $j_k$ and $j_{k+1}$ for all $1\leq l\leq s$.  
But either $m_1(=i_1)$ or $m_2(=i_s)$ does not lie between $j_k$ and $j_{k+1}$, 
thus we have a contradiction. 
\end{proof}  

The following result gives a relationship between $\Pi_L$ and the orthogonal complement of $L^{\Q}$. 

\begin{lemma} \label{dimsimL}  
Let $L\subseteq \N_0^r$ be a stratified linear set, with $\Pi_L = \{\Lambda_1,\ldots,\Lambda_t\}$, 
and let $\L = L^{\Q}$.  Then $\L^\perp$ has a basis of the form 
$\{\x_i = \sum_{j\in \Lambda_i} \gamma_j e_j \mid i\in M\}$, where $M\subseteq \{1,\ldots,t\}$.  
In particular, $\dim(\L^\perp)=|M|\leq t$.
\end{lemma}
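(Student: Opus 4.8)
The plan is to exploit the fact that, because $L$ is stratified, every period of $L$ has at most two non-zero components, so the support of each period lies entirely inside a single block of the partition $\Pi_L$. Write $\L_0 = \operatorname{span}_\Q P$ for the $\Q$-subspace spanned by the periods of $L$ (the direction of the coset $\L = L^\Q$), and understand $\L^\perp$ as $\L_0^\perp$. Since the blocks $\Lambda_1,\ldots,\Lambda_t$ partition $\{1,\ldots,r\}$, the standard basis gives an orthogonal decomposition $\Q^r = \bigoplus_{i=1}^t \Q^{\Lambda_i}$, where $\Q^{\Lambda_i} = \operatorname{span}\{e_j \mid j\in\Lambda_i\}$. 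Because each period is supported in one block, $\L_0$ respects this decomposition: $\L_0 = \bigoplus_{i=1}^t \L_0^{(i)}$, where $\L_0^{(i)}$ is the span of those periods whose support lies in $\Lambda_i$. Taking orthogonal complements blockwise (a standard fact for subspaces adapted to an orthogonal direct sum) then gives $\L^\perp = \bigoplus_{i=1}^t (\L_0^{(i)})^{\perp_i}$, where $(\L_0^{(i)})^{\perp_i}$ denotes the orthogonal complement of $\L_0^{(i)}$ taken inside $\Q^{\Lambda_i}$; in particular each such summand is spanned by vectors of the form $\sum_{j\in\Lambda_i}\gamma_j e_j$.

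The heart of the argument is to show that each summand $(\L_0^{(i)})^{\perp_i}$ has dimension at most $1$; equivalently, that $\dim \L_0^{(i)} \geq |\Lambda_i| - 1$. Here I would view $\Lambda_i$ as the vertex set of a graph $G_i$ with an edge $\{m,n\}$ for every two-component period $\alpha e_m + \beta e_n$ of $L$. By the definition of $\sim_L$ as the reflexive transitive closure of $\rho_L$, the block $\Lambda_i$ is exactly a connected component, so $G_i$ is connected. Choosing a spanning tree of $G_i$ yields $|\Lambda_i| - 1$ periods, and I would prove these are linearly independent over $\Q$ by induction on $|\Lambda_i|$: a tree with at least two vertices has a leaf $l$, whose unique incident period is the only one with a non-zero $e_l$-component, so its coefficient in any vanishing $\Q$-combination must be zero, and deleting the leaf reduces to a spanning tree of a smaller block. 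Hence $\dim \L_0^{(i)} \geq |\Lambda_i| - 1$, and so $(\L_0^{(i)})^{\perp_i}$ is at most one-dimensional.

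To finish, set $M = \{\, i \mid (\L_0^{(i)})^{\perp_i}\neq 0 \,\}$, and for each $i\in M$ choose a non-zero $\x_i\in (\L_0^{(i)})^{\perp_i}$, which automatically has the form $\sum_{j\in\Lambda_i}\gamma_j e_j$. Since $\L^\perp$ is the internal (orthogonal) direct sum of these one-dimensional spaces, the family $\{\x_i\mid i\in M\}$ is a basis of $\L^\perp$, whence $\dim(\L^\perp) = |M| \leq t$, as required.

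I expect the only genuine obstacle to be the linear-independence step for the spanning-tree periods; the remainder is bookkeeping with the orthogonal block decomposition. It is worth noting that this proof uses only condition (1) of stratification (at most two non-zero components per period) together with the connectedness of the blocks; the non-crossing condition (2) plays no role in this particular lemma.
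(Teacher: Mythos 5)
Your proof is correct, and it reaches the conclusion by a genuinely different route through the key step. Both arguments rest on the same two observations: condition (1) of stratification forces each period to be supported inside a single block of $\Pi_L$, and each block is connected under $\rho_L$. But where you package the first observation as an orthogonal direct-sum decomposition $\L^\perp = \bigoplus_i \bigl(\L_0^{(i)}\bigr)^{\perp_i}$ and then bound each summand by a dimension count, the paper works with individual vectors: it takes an arbitrary element of $\L^\perp$ with a non-zero coordinate in $\Lambda_i$, truncates it to that block, verifies directly that the truncation is still orthogonal to every period, and proves spanning by propagating coefficients along $\rho_L$-paths (from $\x_i\cdot(\alpha_m e_m + \alpha_n e_n) = 0$ one gets $\gamma_n = -\gamma_m \alpha_m/\alpha_n$, so the block component of any element of $\L^\perp$ is determined up to a single scalar). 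Your spanning-tree argument, giving $\dim \L_0^{(i)} \geq |\Lambda_i| - 1$ and hence a block complement of dimension at most one, is the dual of that coefficient-propagation step; what it buys you is that the direct-sum bookkeeping then does automatically what the paper must check by hand, namely that the truncated vectors lie in $\L^\perp$ and span it. Your closing remark is also accurate: neither proof uses the non-crossing condition (2), which enters only in Lemma~\ref{overlap} and the subsequent applications. Finally, your observation that $\L^\perp$ must be read as the annihilator of the span of the periods (since $L^{\Q}$ is in general a coset) is a worthwhile clarification; the paper's proof implicitly adopts the same reading, as it only ever pairs elements of $\L^\perp$ against periods.
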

\begin{proof}
Let $M$ be the set of all $i\in\{1,\ldots,t\}$ such that $\x(j)\neq 0$ for some $\x\in \L^\perp$ and $j\in \Lambda_i$.  
For each $i\in M$, fix some non-zero $\x^{(i)}\in \L^\perp$ with $\x^{(i)}(j)\neq 0$ for some $j\in \Lambda_i$.  
We can write $\x^{(i)} = \sum_{j=1}^r \gamma_{ij} e_j = \sum_{s=1}^t \x^{(i)}_s$, 
where $\x^{(i)}_s = \sum_{j\in \Lambda_s} \gamma_{ij} e_j$, since $\{1,\ldots,r\}$ is the disjoint union of $\Lambda_1,\ldots,\Lambda_t$.
For $i\in M$, let $\x_i = \x^{(i)}_i$.  Then $\{\x_i \mid i\in M\}$ is a linearly independent set, 
since $\x_i\neq 0$ by the choice of $\x^{(i)}$, and $\x_i(j)=0$ for all $j\notin \Lambda_i$.

Let $P$ be the set of periods of $L$, and for $1\leq i\leq t$, let 
\[P_i = \{\alpha_me_m + \alpha_ne_n\in P \mid m,n\in \Lambda_i\},\] 
where one of $\alpha_m$ or $\alpha_n$ may be zero.  
Then $\{P_1,\ldots,P_t\}$ is a partition of $P$.  Now if $\p\in P_i$, then 
$\p\cdot \x^{(i)}_{i'} = 0$ for all $i'\neq i$, since $\x^{(i)}_{i'}(j) = 0$ for all $j\in \Lambda_i$.  
Thus $\p\cdot \x^{(i)} = \p\cdot (\x^{(i)}_1 + \ldots + \x^{(i)}_t) = \p\cdot \x^{(i)}_i = \p\cdot \x_i$.  
But $\x^{(i)}\in \L^\perp$, so $\p\cdot \x_i = 0$.  
Since also $\p\cdot \x_i=0$ for all $\p\in P_{i'}$ with $i'\neq i$, 
we have $\x_i\in \L^\perp$, for all $i\in M$.

It remains to show that $\{\x_i \mid i\in M\}$ spans $\L^\perp$.  
Recall that $\x_i = \sum_{j\in \Lambda_i} \gamma_{ij} e_j$.  
First we show that $\gamma_{ij}\neq 0$ for all $i\in M$, $j\in \Lambda_i$.  
For $i\in M$, certainly $\gamma_{im}\neq 0$ for some $m\in \Lambda_i$, since $\x_i\neq \mathbf{0}$.  
For any $n\in \Lambda_i$ there exist $m_1,\ldots,m_l\in \Lambda_i$ such that 
$m = m_1\rho_L m_2\rho_L\ldots\rho_L m_l = n$, which implies the existence of periods 
$\alpha_{m_1}e_{m_1} + \alpha_{m_2}e_{m_2},\ldots,\alpha_{m_{l-1}}e_{m_{l-1}} + \alpha_{m_l}e_{m_l}\in P_i$ 
with non-zero $\alpha_{m_j}$ for all $1\leq j\leq l$.  
Now \[\x_i\cdot (\alpha_{m_j}e_{m_j} + \alpha_{m_{j+1}} e_{m_{j+1}}) = 
\gamma_{im_j}\alpha_{m_j} + \gamma_{im_{j+1}}\alpha_{m_{j+1}} = 0\] 
for all $1\leq j\leq l-1$, since $\x_i\in \L^\perp$.  
Thus $\gamma_{im_{j+1}} = -\gamma_{im_j}\frac{\alpha_{im_j}}{\alpha_{im_{j+1}}}$ 
and so by induction $\gamma_{in} = \gamma_{im_l}\neq 0$, since $\gamma_{im} = \gamma_{i1}\neq 0$.  
Moreover, for all $n\in \Lambda_i$, the coefficient $\gamma_{in}$ is uniquely determined by $\gamma_{im}$.  
(If two different paths between $m$ and $n$ gave different values for $\gamma_{in}$, 
then our non-zero $\x_i\in \L^\perp$ could not exist.)

Finally, let $\y\in \L^\perp$ and write $\y = \sum_{j=1}^r c_j e_j = \sum_{i=1}^t \y_i$, 
where $\y_i = \sum_{j\in \Lambda_i} c_j e_j$.  
If $\y_i\neq \mathbf{0}$, then choose $j\in \Lambda_i$ with $c_j\neq 0$.  
Since $\gamma_{ij}\neq 0$, we can write $c_j = q\gamma_{ij}$, where $q\in \Q$.  
By exactly the same argument as we used for $\x_i$, we can conclude that $\p\cdot \y_i = 0$ for all $\p\in P$.  
Now for any $\alpha_je_j + \alpha_{j'}e_{j'}\in P_i$, we have 
$\y_i\cdot (\alpha_je_j + \alpha_{j'}e_{j'}) = c_j\alpha_j + c_{j'}\alpha_{j'}$, thus 
$c_{j'} = -c_j\frac{\alpha_j}{\alpha_{j'}}$.  But also $\gamma_{ij'} = -\gamma_{ij}\frac{\alpha_j}{\alpha_{j'}}$.  
Thus $c_{j'} = -q\gamma_{ij}\frac{\alpha_j}{\alpha_{j'}} = q\gamma_{ij'}$, 
and we can extend this to show that $c_n = q\gamma_{in}$ for all $n\in \Lambda_i$, thus $\y_i = q\x_i$.  
Since this applies to all $i\in M$ with $\y_i\neq \mathbf{0}$, we can conclude that $\y$ is a linear combination of 
the elements of $\{\x_i \mid i\in M\}$, and thus this set spans $\L^\perp$.
\end{proof}

We are now ready to prove Theorem 10 of \cite{LW}.

\begin{proposition}\label{S(k)}  
For $1\leq i\leq k-1$, let $L_i$ be a stratified linear set with constant vector zero, 
and let $L_1\cap\ldots\cap L_{k-1} = S\subseteq S^{(k)}$.  Then $S$ is a linear set of dimension at most $k-1$.
\end{proposition}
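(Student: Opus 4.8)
The plan is to treat the two assertions separately. The claim that $S$ is linear with constant vector zero is immediate from Proposition~\ref{intlin}, since every $L_i$ has constant vector zero. The real content is the dimension bound, and I would establish $\dim(S)\le k-1$ by contradiction, assuming $\dim(S)=k$.

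First I would pass to the rational spans. Deleting from any $P_i$ a period whose removal does not change $S$ preserves stratification and the zero constant vector, so after finitely many deletions I may assume no period is redundant; writing $\L_i=L_i^{\Q}$, the contrapositive of Lemma~\ref{dimS}, together with the trivial inequality $\dim(S)\le\dim(\L_1\cap\cdots\cap\L_{k-1})$, then forces $\dim(S)=\dim(\L_1\cap\cdots\cap\L_{k-1})$. Since $S^{\Q}\subseteq\L_1\cap\cdots\cap\L_{k-1}$, the assumption $\dim(S)=k$ gives $\L_1\cap\cdots\cap\L_{k-1}=S^{\Q}$; and as $S\subseteq S^{(k)}$ forces $S^{\Q}\subseteq (S^{(k)})^{\Q}$ between spaces of dimension $k$, in fact $S^{\Q}=(S^{(k)})^{\Q}$. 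Taking orthogonal complements and noting that $((S^{(k)})^{\Q})^\perp$ is the space $V:=\langle e_j-e_{k+j}\mid 1\le j\le k\rangle$, I would conclude $\sum_{i=1}^{k-1}\L_i^\perp=V$, and in particular each $\L_i^\perp\subseteq V$.

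The crux is then to bound $\dim(\L_i^\perp)$ for a single stratified factor. Every vector of $V$ satisfies $\v(j)=-\v(k+j)$, so its support is a union of the complementary pairs $\{j,k+j\}$, and these $k$ pairs pairwise cross, since $j<j'\le k$ yields $j<j'<k+j<k+j'$. By Lemma~\ref{dimsimL}, $\L_i^\perp$ has a basis with one vector per block of $\Pi_{L_i}$, each supported inside its block; since every such vector lies in $V$, its support is a union of pairs contained in a single block, so that block ``captures'' at least one pair. But Lemma~\ref{overlap} says exactly that $\Pi_{L_i}$ is non-crossing, so two distinct blocks cannot capture two pairs that cross; as all our pairs cross, at most one block can capture any pair, and hence $\dim(\L_i^\perp)\le 1$.

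Summing over the $k-1$ factors then gives $\dim(\sum_{i=1}^{k-1}\L_i^\perp)\le k-1<k=\dim(V)$, contradicting $\sum_{i=1}^{k-1}\L_i^\perp=V$ and completing the argument. I expect the last step to be the main obstacle: the whole proof hinges on converting the stratification of each $L_i$---through the non-crossing property of $\Pi_{L_i}$ supplied by Lemma~\ref{overlap}---into the fact that a single stratified factor contributes at most one dimension to the orthogonal complement, precisely because the $k$ mutually crossing pairs defining $V$ cannot be distributed among different blocks.
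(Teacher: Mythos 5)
Your proposal is correct and follows essentially the same route as the paper's proof: reduce to $\dim(S)=\dim(\L_1\cap\cdots\cap\L_{k-1})$ via Lemma~\ref{dimS}, identify $\S^\perp$ with the span of the $e_j-e_{k+j}$, and use Lemma~\ref{dimsimL} together with the non-crossing property from Lemma~\ref{overlap} to show each stratified factor contributes at most one dimension to $\S^\perp$, yielding the contradiction $k\le k-1$. The only differences are presentational (you derive $\S^\perp=V$ directly from the assumption $\dim(S)=k$, where the paper phrases this as ``assume the $e_j-e_{k+j}$ span $\S^\perp$, else we are done''), so there is nothing further to add.
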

\begin{proof}
$S$ is a linear set with constant vector zero by Proposition~\ref{intlin}.  
Let $\L_i = L^{\Q}$ for all $1\leq i\leq k-1$, 
and let $\S = \L_1\cap\ldots\cap \L_{k-1}$.  By Lemma~\ref{dimS}, we can assume that $\dim(\S) = \dim(S)$.  
Since $S\subseteq \S$, this implies that any maximal linearly independent subset of the periods 
of $S$ is a basis for $\S$.  
Thus, since $\v(i) = \v(k+i)$ for all $\v\in S$, we also have $\v(i) = \v(k+i)$ for all $\v\in \S$.  
For all $1\leq i\leq k$, we have $e_i - e_{k+i}\in \S^\perp$, since 
$\v\cdot(e_i - e_{k+i}) = \v(i) - \v(k+i) = 0$ for all $\v\in \S$.  

Assume $\{e_i - e_{k+i} \mid 1\leq i\leq k\}$ spans $\S^\perp$, since otherwise 
$\dim(\S^\perp)\geq k+1$ and thus $\dim(\S)\leq 2k - (k+1) = k-1$.

If $\L_i^\perp\neq \{\mathbf{0}\}$, let $\Pi_{L_i} = \{\Lambda_1,\ldots,\Lambda_t\}$.  
Then, by Lemma~\ref{dimsimL}, $\L_i^\perp$ has a basis of the form $\{\x_s \mid s\in M\}$, 
where $M\subseteq \{1,\ldots,t\}$ and $\x_s = \sum_{j\in \Lambda_s} \gamma_j e_j$.  
If $s\in M$, then since $\x_s\in \L_i^\perp\subseteq \L^\perp$, we can write 
\[\x_s  = \sum_{j\in \Gamma_s} \gamma_j(e_j - e_{k+j}),\] 
where $\Gamma_s = \Lambda_s\cap \{1,\ldots,k\}$.  Certainly some $\gamma_j$ must be non-zero, 
implying $j, (k+j)\in \Lambda_s$.  Thus if $s, s'\in M$, then 
we would have some $j, (k+j)\in \Lambda_s$, $l, (k+l)\in \Lambda_{s'}$.  
But either $j<l<(k+j)<(k+l)$ or $l<j<(k+l)<(k+j)$, thus this would contradict Lemma~\ref{overlap}.  
Therefore at most one $s\in M$, and so $\dim(\L_i^\perp)\leq 1$.  This holds for all $1\leq i\leq k-1$.

But if each $\L_i^\perp$ is at most one dimensional, then 
since $\S^\perp = \L_1^\perp + \ldots + \L_{k-1}^\perp$, $\dim(\S^\perp)$ cannot exceed $k-1$, 
contradicting the fact that $e_j - e_{k+j}\in \S^\perp$ for all $1\leq j\leq k$.  
Thus our assumption that $\{e_j - e_{k+j} \mid 1\leq j\leq k\}$ spans $\S^\perp$ was false, 
and so in fact $\dim(S)\leq k-1$.
\end{proof}

\subsubsection{The rest of the proof}

\begin{theorem}\label{L(k)}{\rm \cite[Theorem~8]{LW}}
The language $L^{(k)}$ is $k$-$\cal{CF}$, but not $(k-1)$-$\cal{CF}$.  Thus, for all $k\geq 2$, 
the class of $k$-$\cal{CF}$ languages properly contains the class of $(k-1)$-$\cal{CF}$ languages.
\end{theorem}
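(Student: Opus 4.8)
The plan is to prove the two halves separately: the $k$-$\CF$ upper bound by a direct construction, and the non-$(k-1)$-$\CF$ lower bound by assembling the machinery developed above.

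For the upper bound I would exhibit $k$ context-free languages whose intersection is exactly $L^{(k)}$. For each $i$ with $1 \leq i \leq k$, let
\[ L_i = \{a_1^{m_1}\cdots a_{2k}^{m_{2k}} \mid m_j \in \N_0,\; m_i = m_{k+i}\}. \]
Each $L_i$ is context-free: a pushdown automaton can scan the blocks $a_1^{m_1},\ldots,a_{2k}^{m_{2k}}$ in order (enforcing the bounded shape $a_1^*\cdots a_{2k}^*$ in its finite control), pushing one stack symbol for each letter of the $a_i$-block and popping one for each letter of the $a_{k+i}$-block, accepting precisely when the two counts agree. Since the constraints defining $L^{(k)}$ are exactly $m_i = m_{k+i}$ for $1 \leq i \leq k$, we get $\bigcap_{i=1}^k L_i = L^{(k)}$, so $L^{(k)}$ is $k$-$\CF$.

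For the lower bound I would feed $S^{(k)}$ into Lemma~\ref{lemma4}. First observe that $S^{(k)} = L(\mathbf{0}; P)$ with $P = \{e_i + e_{k+i} \mid 1 \leq i \leq k\}$; these $k$ periods use pairwise disjoint coordinate pairs and so are linearly independent over $\Q$, making $S^{(k)}$ a $k$-dimensional linear set with constant vector zero, exactly the setting of Lemma~\ref{lemma4}. Proposition~\ref{S(k)} supplies the remaining hypothesis of that lemma verbatim: any subset of $S^{(k)}$ expressible as an intersection of $k-1$ stratified linear sets with constant vector zero has dimension at most $k-1$. Lemma~\ref{lemma4} then yields that $S^{(k)}$ is \emph{not} an intersection of $k-1$ stratified semilinear sets. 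Now suppose, for contradiction, that $L^{(k)}$ were $(k-1)$-$\CF$. Taking $w_i = a_i$ for $1 \leq i \leq 2k$ in Corollary~\ref{polyParikh}, and using $L^{(k)} \subseteq a_1^*\cdots a_{2k}^*$ so that $L^{(k)} \cap a_1^*\cdots a_{2k}^* = L^{(k)}$, we would conclude that $S^{(k)} = \Phi(L^{(k)})$ is an intersection of $k-1$ stratified semilinear sets, a contradiction. Hence $L^{(k)}$ is not $(k-1)$-$\CF$, and the final assertion is immediate: the inclusion of the $(k-1)$-$\CF$ languages in the $k$-$\CF$ languages is trivial, and $L^{(k)}$ witnesses that it is proper for every $k \geq 2$.

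I expect no serious obstacle, since the substantive work already lives in Proposition~\ref{S(k)} and Lemma~\ref{lemma4}; the points needing care are merely the verification that the periods $e_i + e_{k+i}$ are linearly independent (so the dimension is genuinely $k$) and the correct application of Corollary~\ref{polyParikh} with the single-symbol words $w_i = a_i$, for which $\Phi$ coincides with the ordinary commutative image and identifies $\Phi(L^{(k)})$ with $S^{(k)}$.
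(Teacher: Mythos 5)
Your proof is correct and the lower bound follows exactly the paper's route: linear independence of the periods $e_i+e_{k+i}$ gives $\dim S^{(k)}=k$, Proposition~\ref{S(k)} feeds Lemma~\ref{lemma4}, and Corollary~\ref{polyParikh} with $w_i=a_i$ transfers the conclusion to $L^{(k)}$. The only (harmless) divergence is the upper bound, where you build $k$ pushdown automata directly instead of exhibiting $S^{(k)}$ as an intersection of $k$ stratified linear sets $S_i$ and invoking the Ginsburg--Spanier correspondence as the paper does; your version is if anything more self-contained, since it avoids the ``if'' direction of Theorem~\ref{Parikh}.
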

\begin{proof}
By Corollary~\ref{polyParikh}, it suffices to show that $S^{(k)}$ is an intersection of $k$ 
but not $k-1$ stratified semilinear sets.  For $1\leq i\leq k$, define 
\[S_i = \operatorname{span} \left\{e_i + e_{k+i}, e_j \mid 1\leq j\leq 2k, 
j\notin \left\{i, k+i\right\} \right\}.\]
Then each $S_i$ is a stratified linear set and $S^{(k)} = \bigcap_{i=1}^k S_i$.
Also, $S^{(k)}$ has constant vector zero and dimension $k$, since $\{e_i + e_{k+i} \mid 1\leq i\leq k\}$ 
is a linearly independent subset which spans $S^{(k)}$.  
Hence, by Proposition~\ref{S(k)}, $S^{(k)}$ satisfies the hypothesis of Lemma~\ref{lemma4}, 
so cannot be expressed as an intersection of $k-1$ stratified semilinear sets.
\end{proof}

\subsection{The languages $L^{(n,k)}$}\label{L(n,k) section}

We can extend Theorem~\ref{L(k)} to a larger, but very similar, class of languages.  
The extended result will be used to prove that certain groups, 
for example the restricted standard wreath products $C_p\wr \Z$ (for any $p>1$), are not poly-$\cal{CF}$.

For each $n,k\in \N$, let $a_1,a_2,\ldots,a_{2nk}$ be $2nk$ distinct symbols and define
\[\begin{array}{ll}
L^{(n,k)} = \{a_1^{m_1} a_2^{m_2}\ldots a_{2nk}^{m_{2nk}} \mid & m_i\in \N_0, m_i = m_{nk+i}\; (1\leq i\leq nk),\\ 
& m_{nj+1} = m_{nj+l}\; (0\leq j\leq k-1,\; 2\leq l\leq n)\}.
\end{array}\]
For example, $L^{(2,2)} = \{ a_1^m a_2^m a_3^n a_4^n a_5^m a_6^m a_7^n a_8^n \mid m,n\in \N_0\}$.
Define $S^{(n,k)}$ to be $\Phi\left(L^{(n,k)}\right)$.  Then
\[\begin{array}{ll}
S^{(n,k)} = \{ \v\in \N_0^{2nk} \mid & \v(i) = \v(i+nk) \; (1\leq i\leq nk),\\ 
& \v(nj+1) = \v\left(nj+l\right) \; (0\leq j\leq k-1, \; 2\leq l\leq n)\}.
\end{array}\]
These sets are like $S^{(k)}$, except with each entry being repeated $n$ times.  
Thus $S^{(1,k)}$ is  just $S^{(k)}$.  For any $n\in \N$, the set $S^{(n,k)}$ has dimension $k$, 
so it is not surprising that the following result does not depend on $n$.

\begin{proposition}\label{S(n,k)}  
For $1\leq i\leq k-1$, let $L_i$ be a stratified linear set 
with constant vector zero, and let $L_1\cap\ldots\cap L_{k-1} = S\subseteq S^{(n,k)}$.  
Then $S$ is a linear set of dimension at most $k-1$.
\end{proposition}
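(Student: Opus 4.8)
The plan is to follow the proof of Proposition~\ref{S(k)} as closely as possible, inserting one projection step to absorb the dependence on $n$. As there, $S$ is a linear set with constant vector $\mathbf{0}$ by Proposition~\ref{intlin}. Writing $\L_i = L_i^{\Q}$ and $\S = \L_1\cap\ldots\cap \L_{k-1}$, I would first apply Lemma~\ref{dimS} repeatedly to reduce to the case $\dim(\S) = \dim(S)$. Suppose, for contradiction, that $\dim(S) = k$, the largest value possible since $S\subseteq S^{(n,k)}$. Then $\S = (S^{(n,k)})^{\Q}$: a maximal linearly independent set of periods of $S$ lies in $S\subseteq S^{(n,k)}$ and hence satisfies every defining equation of $S^{(n,k)}$, so all of $\S$ does too, and the dimensions agree.

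Next I would group the $2nk$ coordinates into $k$ \emph{columns}, column $c$ consisting of the first-half block $A_c = \{n(c-1)+1,\ldots,nc\}$ together with its mirror $B_c = \{nk+n(c-1)+1,\ldots,nk+nc\}$. The defining equations say exactly that all $2n$ coordinates of a column are equal on $S^{(n,k)}$, so $\S^\perp = \bigoplus_{c=1}^k V_c$, where $V_c$ is the space of vectors supported on column $c$ with coordinate sum $0$ and $\dim(V_c) = 2n-1$. The new feature, compared with Proposition~\ref{S(k)}, is that $V_c$ also contains the \emph{within-half} directions $e_p - e_{p'}$ (with $p,p'\in A_c$ or $p,p'\in B_c$); these carry the dependence on $n$, and a single stratified $\L_i^\perp$ may contain many of them, so there is no hope of bounding $\dim(\L_i^\perp)$ by a constant as in the case $n=1$. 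To circumvent this I would introduce the linear map $\phi:\S^\perp\to\Q^k$ given by $\phi(\x)(c) = \sum_{p\in A_c}\x(p)$. It annihilates exactly the within-half directions, so its kernel is $2k(n-1)$-dimensional and $\phi$ is onto $\Q^k$; a nonzero value $\phi(\x)(c)$ records the \emph{cross-half} (mirror) imbalance of $\x$ in column $c$, which is precisely the $n$-independent information that mimics the $S^{(k)}$ structure.

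Since $\S = \bigcap_i \L_i$ gives $\S^\perp = \sum_i \L_i^\perp$, we obtain $\Q^k = \phi(\S^\perp) = \sum_{i=1}^{k-1}\phi(\L_i^\perp)$. The crux, which I expect to be the main obstacle, is the claim that $\dim\phi(\L_i^\perp)\leq 1$ for each $i$, the correct generalisation of ``$\dim\L_i^\perp\leq 1$'' in Proposition~\ref{S(k)}. To prove it I would take the class-adapted basis $\{\x_s\mid s\in M_i\}$ of $\L_i^\perp$ supplied by Lemma~\ref{dimsimL}, each $\x_s$ being supported on a single $\sim_{L_i}$-class $\Lambda_s$ and nonzero throughout $\Lambda_s$. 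If $\phi(\x_s)(c)\neq 0$, the zero-column-sum condition forces $\Lambda_s$ to meet both $A_c$ and $B_c$. Now suppose two distinct classes satisfied $\phi(\x_s)(c)\neq 0$ and $\phi(\x_{s'})(c')\neq 0$ with $c\neq c'$; choosing $m_1\in\Lambda_s\cap A_c$, $m_2\in\Lambda_s\cap B_c$, $n_1\in\Lambda_{s'}\cap A_{c'}$, $n_2\in\Lambda_{s'}\cap B_{c'}$ and using that every first-half index precedes every second-half one (so that for $c<c'$ one has $A_c < A_{c'} < B_c < B_{c'}$), these interleave as $m_1 < n_1 < m_2 < n_2$ with $m_1\sim_{L_i} m_2$ and $n_1\sim_{L_i} n_2$, contradicting Lemma~\ref{overlap}. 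Hence no two distinct classes can carry nonzero $\phi$-values in different columns: either at most one class contributes a nonzero $\phi(\x_s)$, or every contributing class has $\phi(\x_s)\in\langle e_{c_0}\rangle$ for one common column $c_0$. In both cases $\phi(\L_i^\perp)$ is at most one-dimensional. Summing over the $k-1$ sets then yields $k = \dim(\Q^k) \leq (k-1)\cdot 1 = k-1$, a contradiction, so $\dim(S)\leq k-1$ as required.
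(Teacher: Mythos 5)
Your proof is correct and takes essentially the same route as the paper's: the same reduction via Lemma~\ref{dimS}, the same decomposition into columns $\Delta_j = A_c\cup B_c$, and the same use of Lemma~\ref{dimsimL} together with the interleaving $A_c < A_{c'} < B_c < B_{c'}$ and Lemma~\ref{overlap} to bound each $\L_i^\perp$'s contribution. Your map $\phi$ is exactly the quotient by the paper's subspace $\Omega$ of within-half differences, so the claim $\dim\phi(\L_i^\perp)\leq 1$ is a cleaner restatement of the paper's count $\dim(\Omega+\mathcal{M}_1+\cdots+\mathcal{M}_{k-1})\leq 2k(n-1)+k-1$.
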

\begin{proof}
The proof follows the idea of the proof of Proposition~\ref{S(k)}, but is a good deal more complicated.

$S$ is a linear set with constant vector zero by Proposition~\ref{intlin}.  
Let $\L_i = L_i^{\Q}$ for $1\leq i\leq k-1$, and let $\S = \L_1\cap\ldots\cap \L_{k-1}$.  
By Lemma~\ref{dimS}, we can assume that $\dim(\S) = \dim(S)$.  
Since $S\subseteq \S$, this implies that any maximal linearly independent 
subset of the periods of $S$ is a basis for $\S$.  

Thus since $\v(i) = \v(nk+i)$ for all $\v\in S$, we also have 
$\v(i) = \v(nk+i)$ for all $\v\in \S$, $1\leq i\leq nk$.  
Moreover, for all $\v\in \S$ we have $\v(nj+l) = \v(nj+l+1)$ for all $0\leq j\leq k-1$, $1\leq l\leq n-1$.
 
For all $1\leq i\leq nk$, we have $e_i - e_{nk+i}\in \S^\perp$, since, for all $\v\in \S$,
\[\v\cdot(e_i - e_{nk+i}) = \v(i) - \v(nk+i) = 0.\]  
Similarly, $e_{nj+l} - e_{nj+l+1}\in \S^\perp$ for all $0\leq j\leq k-1$ and $1\leq l\leq n-1$.  
Thus we know of $nk + (n-1)k = (2n-1)k$ linearly independent elements of $\S^\perp$. 

Assume that these $(2n-1)k$ elements form a basis of $\S^\perp$, since otherwise 
\[\dim(S) = \dim(\S) < 2nk - (2n-1)k = k,\] as we require.  
We will now derive a contradiction, using the fact that 
$\S^\perp = \L_1^\perp+\ldots+\L_{k-1}^\perp$.

For $0\leq j\leq k-1$ and $\epsilon\in \{0,1\}$, define 
\[\Delta_j^{\epsilon} = \{ n(\epsilon k +j) +l \mid 1\leq l\leq n\}\] and $\Delta_j = \Delta_j^0\cup\Delta_j^1$.  
Let $\S_j$ be the image of the projection of $\S^\perp$ onto the coordinates in $\Delta_j$.  
Since every vector in the basis of $\S^\perp$ above is contained in some $\S_j$, 
and the $\Delta_j$ are disjoint, $\S^\perp$ is the direct sum of $\S_0,\ldots,\S_{k-1}$.

Call $\x\in \S^\perp$ a \emph{$j$-bridge} if there exist $l\in \Delta_j^0$ and $l'\in \Delta_j^1$ 
such that $\x(l)$ and $\x(l')$ are both non-zero.  By extension, for $\Gamma\subseteq \{0,\ldots,k-1\}$, 
call $\x$ a \emph{$\Gamma$-bridge} if $\x$ is a $j$-bridge for all $j\in \Gamma$.   

For $0\leq j\leq k-1$, let $\Omega_j$ be the $2(n-1)$-dimensional subspace of $\S_j^\perp$ generated by 
\[\{e_{n(\epsilon k +j)+l}-e_{n(\epsilon k +j)+l+1} \mid \epsilon\in \{0,1\}, 1\leq l\leq n-1\}\]
and let $\Omega = \Omega_0 + \ldots + \Omega_{k-1}$.

Suppose that $\x$ is not a $j$-bridge for any $j$.  We will show that $\x$ must be in $\Omega$.  
Write $\x = \sum_{j=0}^{k-1} \y_j$, where $\y_j\in \S_j$.  Then no $\y_j$ is a $j$-bridge.
For any $j\in \{0,\ldots,k-1\}$, the non-zero coordinates of $\y_j$ are either 
all in $\Delta_j^0$ or all in $\Delta_j^1$, since $\y_j$ is in $\S_j$ and is not a $j$-bridge. 
For any $\v\in \S^\perp$, the sum of the entries of $\v$ is zero, 
as can be seen by considering the basis vectors of $\S^\perp$.  
Thus the subspace of $\S^\perp$ consisting of vectors whose non-zero coordinates 
all lie in $\Delta_j^\epsilon$ is spanned by 
$\{e_{n(\epsilon k +j)+l}-e_{n(\epsilon k +j)+l+1} \mid 1\leq l\leq n-1\}
\subseteq \Omega_j$, for $\epsilon\in \{0,1\}$.  
Hence $\y_j\in \Omega_j$, and since this applies for all $0\leq j\leq k-1$, we conclude 
that $\x = \sum_{j=0}^{k-1} \y_j\in \Omega$.  

If $\L_i^\perp\neq \{\mathbf{0}\}$, let $\Pi_{L_i} = \{\Lambda_1,\ldots,\Lambda_t\}$.  
Then by Lemma~\ref{dimsimL}, $\L_i^\perp$ has a basis of the form 
$B_i = \{\x_s \mid s\in M\}$, where $M\subseteq \{1,\ldots,t\}$ and $\x_s = \sum_{j\in \Lambda_s} \gamma_{sj} e_j$.  
Note that if $\x_s$ is a $j$-bridge and $s'\neq s$, $j'\neq j$, then $\x_{s'}$ cannot be a $j'$-bridge, 
since this would imply the existence of $l_1,l_2,l'_1,l'_2\in \{1,\ldots,n\}$ such that 
\[nj+l_1, n(k+j)+l_2\in \Lambda_s, \quad nj'+l'_1, n(k+j')+l'_2\in \Lambda_{s'},\] 
contradicting Lemma~\ref{overlap}.

If $B_i$ contains no $\Gamma$-bridges for any non-empty $\Gamma$, 
then every $\x_s\in B_i$ is in $\Omega$, hence $\L_i^\perp\subseteq \Omega$.
If the largest $\Gamma$ such that $\x_s$ is a $\Gamma$-bridge is a singleton $\{j\}$, 
then $B_i$ may possibly contain other $j$-bridges;  
but, as already observed, $B_i$ contains no $j'$-bridges for $j'\neq j$. 
If $\Gamma$ has at least two elements and $\x_s$ is a $\Gamma$-bridge, 
then $B_i$ contains no other $\Gamma'$-bridges, 
even for $\Gamma' = \Gamma$, since this would again imply a situation contradicting Lemma~\ref{overlap}.

Thus there is at most one $\Gamma\subseteq \{0,\ldots,k-1\}$ such that $B_i$ contains one or more $\Gamma$-bridges.  
If such $\Gamma$ exists, call it $\Gamma_i$.

For each $i$, we have $\L_i^\perp = {\cal M}_i + {\cal N}_i$, where ${\cal M}_i$ is 
the subspace generated by the $\Gamma_i$-bridge(s) and ${\cal N}_i$ is the 
subspace generated by the remaining elements of $B_i$.  

Now consider 
\[\S^\perp = \L_1^\perp + \ldots + \L_{k-1}^\perp = 
{\cal M}_1 + \ldots + {\cal M}_{k-1} + {\cal N}_1 +\ldots + {\cal N}_{k-1}.\]
Since the ${\cal N}_i$ are generated by elements which are not $\Gamma$-bridges for any non-empty $\Gamma$, 
they are all subspaces of $\Omega$.  Thus
$\S^\perp\subseteq {\cal M}_1 + \ldots + {\cal M}_{k-1} + \Omega$.

If $\Gamma_i$ contains at least two elements, then $B_i$ has a single $\Gamma_i$-bridge, 
so ${\cal M}_i$ has dimension one.  
If $\Gamma_i = \{j\}$, then even though ${\cal M}_i$ can have dimension up to $n$, 
$\Omega_j + {\cal M}_i$ has to be contained in $\S_j$, 
so can have dimension at most $2n-1$, which is one more than the dimension of $\Omega_j$.  
Thus each ${\cal M}_i$ contributes at most one extra dimension to the set 
$\Omega+{\cal M}_1 + \ldots+ {\cal M}_{k-1}$, and so 
\begin{align*}
\dim(\S^\perp) & \leq  \dim(\Omega +{\cal M}_1 + \ldots+ {\cal M}_{k-1})\\
               & \leq  2k(n-1) + k-1 = (2n-1)k -1,
\end{align*}
giving a contradiction.  Thus our assumption that $\S^\perp$ was spanned by 
${(2n-1)k}$ elements is incorrect, and so \[\dim(S) = \dim(\S)\leq 2nk - ((2n-1)k+1) = k-1. \qedhere \]
\end{proof}

\begin{corollary}\label{kS(n,k)}
A $k$-dimensional linear subset of $S^{(n,k)}$ cannot
be expressed as an intersection of $k-1$ stratified semilinear sets.
\end{corollary}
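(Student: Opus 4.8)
The plan is to run essentially the argument that deduces Theorem~\ref{L(k)} from Proposition~\ref{S(k)} via Lemma~\ref{lemma4}, but with Proposition~\ref{S(n,k)} replacing Proposition~\ref{S(k)}, and adapted so as to treat an arbitrary $k$-dimensional linear subset $T\subseteq S^{(n,k)}$ rather than $S^{(n,k)}$ itself. I would argue by contradiction: suppose $T = \bigcap_{i=1}^{k-1} N_i$ with each $N_i$ a stratified semilinear set. Writing each $N_i$ as a finite union of stratified linear sets and distributing the intersection over the unions, $T$ becomes a finite union of sets of the form $L = \bigcap_{i=1}^{k-1} L_i$, each an intersection of $k-1$ stratified linear sets, and each contained in $T\subseteq S^{(n,k)}$.

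Next I would apply Proposition~\ref{intlin} to each such $L$, writing $L = L(\C; \mathcal{P})$ where $\bigcap_{i=1}^{k-1} L_i^{\mathbf{0}} = L(\mathbf{0}; \mathcal{P})$. The one place the argument genuinely departs from Lemma~\ref{lemma4} is the verification that $M := L(\mathbf{0}; \mathcal{P})$ is itself contained in $S^{(n,k)}$. In Lemma~\ref{lemma4} this used the linear independence of the periods of $S$; here I would instead exploit that $S^{(n,k)}$ is exactly the set of solutions in $\N_0^{2nk}$ of a system of \emph{homogeneous} linear equations. Concretely, for $\c\in \C$ and $\p\in \mathcal{P}$ we have $\c + \alpha\p\in L\subseteq S^{(n,k)}$ for every $\alpha\in \N_0$; subtracting two such members shows $\p$ satisfies all the defining equations of $S^{(n,k)}$, and since $\p\in \N_0^{2nk}$ this forces $\p\in S^{(n,k)}$, whence $M\subseteq S^{(n,k)}$.

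Since $M = \bigcap_{i=1}^{k-1} L_i^{\mathbf{0}}$ is an intersection of $k-1$ stratified linear sets with constant vector zero lying inside $S^{(n,k)}$, Proposition~\ref{S(n,k)} yields $\dim M\leq k-1$. Each $L(\c; \mathcal{P})$ with $\c\in \C$ is a coset of $M$ and so also has dimension at most $k-1$, making each $L$ a finite union of linear sets of dimension at most $k-1$. Collecting these over the finitely many pieces exhibits $T$ as a finite union of linear sets of dimension at most $k-1$, which contradicts Proposition~\ref{dimlin} since $\dim T = k$. This completes the argument.

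I expect the only delicate point to be the verification that $M\subseteq S^{(n,k)}$, that is, that passing from $L$ to its constant-zero period set keeps us inside $S^{(n,k)}$; everything else is bookkeeping together with the substantial content already supplied by Proposition~\ref{S(n,k)}. It is worth noting that this route proves the statement for an \emph{arbitrary} $k$-dimensional linear subset $T$ of $S^{(n,k)}$, and not merely for $S^{(n,k)}$ itself, precisely because the homogeneity of $S^{(n,k)}$ lets us dispense with the linear-independence hypothesis on the periods required in Lemma~\ref{lemma4}.
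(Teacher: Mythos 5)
Your proposal is correct and follows essentially the same route as the paper: decompose the hypothetical intersection into a finite union of intersections of $k-1$ stratified linear sets, pass to the common constant-zero period set via Proposition~\ref{intlin}, bound its dimension by Proposition~\ref{S(n,k)}, and derive a contradiction with Proposition~\ref{dimlin} (the paper merely invokes Proposition~\ref{dimlin} before rather than after applying Proposition~\ref{S(n,k)}, which is a cosmetic reordering). The one place you add something is the explicit verification that $L(\mathbf{0};\mathcal{P})\subseteq S^{(n,k)}$ via the homogeneity of the defining equations of $S^{(n,k)}$ --- the paper asserts this inclusion without comment, so your justification is a welcome (and needed) detail rather than a departure.
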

\begin{proof}
Suppose $L\subseteq S^{(n,k)}$ is $k$-dimensional and can be expressed as an
intersection of $k-1$ stratified semilinear sets.  Then we can write
$L = S_1\cup\ldots\cup S_l$, where each $S_i$ is an intersection of $k-1$ stratified linear sets.
By Proposition~\ref{intlin}, there exist finite subsets $C_i$ and $P_i$ of $\N_0^{2nk}$ such that 
$S_i = L(C_i;P_i)$ for $1\leq i\leq l$.  
By Proposition~\ref{dimlin}, there must exist $1\leq i\leq l$ and $\c\in C_i$ such
that $L(\c;P_i)$ has dimension $k$, and hence $L(\mathbf{0};P_i)$ has dimension $k$.  
Writing $S_i = \cap_{i=1}^{k-1} N_i$, where each $N_i$ is a stratified linear set, 
from Proposition~\ref{intlin} we have $L(\mathbf{0};P_i) = \cap_{i=1}^{k-1} N_i^{\mathbf{0}}$.  
But $L(\mathbf{0};P_i)$ 
is a $k$-dimensional linear subset of $S^{(n,k)}$ with constant zero, while each $N_i^{\mathbf{0}}$
is a stratified linear set, contradicting Proposition~\ref{S(n,k)}.
\end{proof}

\begin{theorem}\label{L(n,k)}
For any $k,n\in \N$, the set $S^{(n,k)}$ is not an intersection of $k-1$
stratified semilinear sets, and so the language $L^{(n,k)}$ is not $(k-1)$-$\cal{CF}$.
\end{theorem}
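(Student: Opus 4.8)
The plan is to deduce Theorem~\ref{L(n,k)} directly from Corollary~\ref{kS(n,k)} and Corollary~\ref{polyParikh}, mirroring the way Theorem~\ref{L(k)} was obtained from Proposition~\ref{S(k)} and Lemma~\ref{lemma4}. The one fact I need to pin down first is that $S^{(n,k)}$ is itself a $k$-dimensional linear set with constant vector zero, not merely a set containing such subsets.

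First I would write down explicit periods. For $0\le j\le k-1$, set
$$\p_j = \sum_{l=1}^n \left(e_{nj+l} + e_{nk+nj+l}\right).$$
Reading off the two families of defining equations of $S^{(n,k)}$, every $\v\in S^{(n,k)}$ is uniquely $\sum_{j=0}^{k-1} x_j\p_j$ with $x_j\in\N_0$, so $S^{(n,k)} = L(\mathbf{0};\p_0,\ldots,\p_{k-1})$. The $\p_j$ have pairwise disjoint supports, hence are linearly independent over $\Q$, so $S^{(n,k)}$ is linear of dimension exactly $k$ with constant zero. This is the routine bookkeeping step.

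With that in hand, $S^{(n,k)}$ is trivially a $k$-dimensional linear subset of itself, so Corollary~\ref{kS(n,k)} applies verbatim and yields the first assertion: $S^{(n,k)}$ is not an intersection of $k-1$ stratified semilinear sets. I would then transfer this to the language. Since $L^{(n,k)}\subseteq a_1^*\ldots a_{2nk}^*$ is bounded and $\Phi(L^{(n,k)}) = S^{(n,k)}$, if $L^{(n,k)}$ were $(k-1)$-$\CF$ then Corollary~\ref{polyParikh} (applied with the single-letter words $w_i = a_i$) would make $S^{(n,k)}$ an intersection of $k-1$ stratified semilinear sets, a contradiction. Hence $L^{(n,k)}$ is not $(k-1)$-$\CF$.

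I do not expect any real obstacle at this stage: all the difficulty lives in Proposition~\ref{S(n,k)} and its consequence Corollary~\ref{kS(n,k)}, which have already been proved. The only things to check here are the period computation above and the trivial observation that a set is a linear subset of itself; once these are noted, the theorem assembles immediately from the two corollaries.
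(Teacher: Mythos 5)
Your proof is correct and takes essentially the same route as the paper: your periods $\p_j$ are exactly the paper's $\u_j = \sum_{i\in \Delta_j} e_i$, and the key step in both is verifying that $S^{(n,k)}$ is a $k$-dimensional linear set with constant vector zero. The only cosmetic difference is that you invoke Corollary~\ref{kS(n,k)} where the paper cites Lemma~\ref{lemma4} together with Proposition~\ref{S(n,k)}; these encapsulate the same content, so the argument is the same.
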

\begin{proof}
Recall from the proof of Proposition~\ref{S(n,k)} the notation 
\[\Delta_j = \{ nj +l \mid 1\leq l\leq n\}\cup\{ n(k +j) +l \mid 1\leq l\leq n\}.\]
For $0\leq j\leq k-1$, let $\u_j = \sum_{i\in \Delta_j} e_i$.  
Then $\{\u_j \mid 0\leq j\leq k-1\}$ is a linearly independent set which spans $S^{(n,k)}$, 
so $S^{(n,k)}$ is $k$-dimensional.  Since $S^{(n,k)}$ has constant vector zero,  
it follows from Lemma~\ref{lemma4} and Proposition~\ref{S(n,k)} that $S^{(n,k)}$ cannot be 
an intersection of $k-1$ stratified semilinear sets
and thus $L^{(n,k)}$ cannot be a $(k-1)$-$\cal{CF}$ language.
\end{proof}

%%%%%%%%%%%%%%%%%%%%%%%%%%%%%%%%%%%%%%%%%%%%%%%%%%%%%%%%%%%%%%%%%%%%%%%%%%%%%%%%%%

\section{Poly-$\cal{CF}$ groups}\label{polyCF group}\label{group}

We begin with a simple observation, followed by our main conjecture.

\begin{observation}\label{polyprod}
The class of poly-$\CF$ groups is closed under taking finite direct products.  The direct product of
a $k_1$-$\CF$ group and a $k_2$-$\CF$ group is $(k_1+k_2)$-$\CF$.
\end{observation}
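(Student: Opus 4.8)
The plan is to prove the statement about direct products of poly-$\CF$ groups by working directly with word problems and generating sets. Let $G_1$ be a $k_1$-$\CF$ group and $G_2$ a $k_2$-$\CF$ group. Since the property is insensitive to choice of generators (Proposition~\ref{closureprops}), I may fix finite generating sets $X_1$ for $G_1$ and $X_2$ for $G_2$ such that $W(G_1,X_1)$ is an intersection of $k_1$ context-free languages and $W(G_2,X_2)$ is an intersection of $k_2$ context-free languages. The natural generating set for $G_1\times G_2$ is $X = X_1\cup X_2$ (viewing each $X_i$ as a subset of $G_1\times G_2$ via the obvious embeddings), where I assume $X_1$ and $X_2$ are disjoint.

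First I would express $W(G_1\times G_2, X)$ in terms of the two word problems. A word $w\in X^*$ represents the identity of $G_1\times G_2$ if and only if its projection onto $G_1$ is trivial and its projection onto $G_2$ is trivial. Concretely, if $\pi_1:X^*\to X_1^*$ deletes all occurrences of letters from $X_2$ (and their inverses) and similarly $\pi_2:X^*\to X_2^*$ deletes letters from $X_1$, then $w\in W(G_1\times G_2,X)$ exactly when $\pi_1(w)\in W(G_1,X_1)$ and $\pi_2(w)\in W(G_2,X_2)$. This gives the key set-theoretic identity
\[
W(G_1\times G_2, X) = \pi_1^{-1}\big(W(G_1,X_1)\big)\cap \pi_2^{-1}\big(W(G_2,X_2)\big).
\]
The deletion maps $\pi_1,\pi_2$ are homomorphisms of free monoids (indeed generalised sequential machine mappings), so I can invoke the closure of the $k$-$\CF$ languages under inverse homomorphisms from Proposition~\ref{polyclose}.

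Next I would assemble the count. Since $W(G_1,X_1)=\bigcap_{i=1}^{k_1}L_i$ with each $L_i$ context-free, $\pi_1^{-1}(W(G_1,X_1))=\bigcap_{i=1}^{k_1}\pi_1^{-1}(L_i)$ is an intersection of $k_1$ context-free languages over the alphabet $X$, and similarly $\pi_2^{-1}(W(G_2,X_2))$ is an intersection of $k_2$ context-free languages. Intersecting the two yields an intersection of $k_1+k_2$ context-free languages, so $W(G_1\times G_2,X)$ is $(k_1+k_2)$-$\CF$, which is exactly the quantitative claim. The statement that poly-$\CF$ groups are closed under finite direct products then follows by induction on the number of factors, since each step preserves being poly-$\CF$.

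The main obstacle, such as it is, lies not in any deep argument but in verifying carefully that the deletion maps $\pi_1,\pi_2$ are genuine monoid homomorphisms behaving correctly on inverses, and that projecting $w$ to $G_i$ and then reducing agrees with first deleting the other generators and then evaluating in $G_i$; this is where one must use that $G_1$ and $G_2$ commute inside $G_1\times G_2$, so that the order in which the interleaved letters appear does not affect the projections. Once that bookkeeping is in place, everything else is a direct application of the already-established closure properties, so I expect the proof to be short.
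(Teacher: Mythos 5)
Your proof is correct and is essentially the argument in the paper: the paper uses the same deletion homomorphisms (there called $h_1,h_2$) and the same identity $W(G_1\times G_2,X)=h_1^{-1}(W(G_1,X_1))\cap h_2^{-1}(W(G_2,X_2))$, merely realising the inverse images concretely by modifying pushdown automata to ignore the other factor's letters rather than citing closure of context-free languages under inverse homomorphisms. The only difference is that you outsource that step to Proposition~\ref{polyclose}, which is a perfectly valid (and arguably cleaner) way to do the same thing.
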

\begin{proof}
It suffices to show that the direct product of two poly-$\CF$ groups is poly-$\CF$.
Let $G_i$ be a $k_i$-$\CF$ group for $i=1,2$.  Let $A_{i1},\ldots,A_{ik_i}$ be pushdown automata 
with input alphabet $X_i$ such that a word is in $W(G_i,X_i)$ if and only if it is accepted by all $A_{ij}$.
We may assume that $X_1$ and $X_2$ are disjoint.  Now modify the automata $A_{ij}$ so that their input 
alphabet is $X = X_1\cup X_2$, but each $A_{1j}$ ignores the symbols in $X_2$ and $A_{2j}$ ignores the symbols in 
$X_1$.  
Let $h_1: X\rightarrow X_1$ be the homomorphism sending every symbol in $X_2$ to the empty word, and define
$h_2$ similarly.
Then a word $w$ in $(X\cup X^{-1})^*$ is accepted by all of the modified automata $A_{ij}$ if and only if
$h_i(w)\in W(G_i,X_i)$ for $i=1,2$.  Thus the intersection of the languages accepted by all the $A_{ij}$ is 
precisely $W(G_1\times G_2, X)$, and hence $G_1\times G_2$ is $(k_1 + k_2)$-$\CF$.
\end{proof}

Since finitely generated free groups are context-free, this implies that a 
direct product of $k$ finitely generated free groups is $k$-$\CF$.
Since the $k$-$\cal{CF}$ groups are closed under taking finite index overgroups and finitely generated 
subgroups, any finitely generated subgroup of a direct product of $k$ free groups, 
and any finite index overgroup of such a group, is $k$-$\cal{CF}$.  These are the only known $k$-$\cal{CF}$ 
groups, and we conjecture that they are the only ones.

\begin{conjecture}\label{polyconj}
Let $G$ be a finitely generated group.  Then $G$ is poly-$\cal{CF}$ if and only if $G$ is virtually 
a finitely generated subgroup of a direct product of free groups.
\end{conjecture}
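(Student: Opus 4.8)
The statement has two directions, of which only the forward one is routine. The plan is to dispose of it first: if $G$ is virtually a finitely generated subgroup of a direct product of free groups, then $G$ is poly-$\CF$. Indeed, a finitely generated free group is context-free by Muller--Schupp, so by Observation~\ref{polyprod} a direct product of $k$ of them is $k$-$\CF$; taking a finitely generated subgroup and then a finite index overgroup both preserve the $k$-$\CF$ property by Proposition~\ref{closureprops}. No new work is needed here, and the abelian groups $\Z^n$ are covered since each is literally a direct product of $n$ free groups of rank one, so every virtually abelian group lies in the target class.

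The converse, that poly-$\CF$ implies membership in the target class, is the genuine content and is open in general. The strategy I would adopt is contrapositive and subgroup-based: for every finitely generated $G$ that is \emph{not} virtually a finitely generated subgroup of a direct product of free groups, exhibit a finitely generated subgroup that is provably not poly-$\CF$; closure under finitely generated subgroups (Proposition~\ref{closureprops}) then forces $G$ itself not to be poly-$\CF$. The engine for the ``not poly-$\CF$'' certificates is the semilinear machinery of Section~\ref{language}: to show a candidate $H$ is not poly-$\CF$, choose generators and words $w_1,\dots,w_m$ so that $W(H)\cap w_1^*\cdots w_m^*$ has, after a coordinate permutation, commutative image equal to (or containing) a $k$-dimensional linear subset of $S^{(n,k)}$; then Corollary~\ref{polyParikh} together with Corollary~\ref{kS(n,k)} rules out $H$ being $(k-1)$-$\CF$, and letting $k$ grow rules out poly-$\CF$ altogether.

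Where this plan can actually be carried out is the soluble case, and here Theorem~\ref{solsubs} supplies exactly the missing structural input. If $G$ is finitely generated soluble and not virtually abelian, it contains one of three explicit types of subgroup, so it suffices to show that each obstruction type fails to be poly-$\CF$. For metabelian $G$ the obstruction can be taken to be $C_p\wr\Z$, and the languages $L^{(n,k)}$ of Section~\ref{L(n,k) section} were constructed precisely to certify that $C_p\wr\Z$ is not poly-$\CF$ via the $S^{(n,k)}$ criterion just described; the proper Gc-groups and the torsion extensions should be handled by the same bounded-language technique (Corollary~\ref{parikh}), often with the stronger conclusion that they are not even co$\CF$. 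Combined with the forward direction, this yields the conjecture for soluble groups, since virtually abelian is then equivalent to poly-$\CF$ within the soluble class.

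The main obstacle is the general case, where the whole scheme loses its footing. There is no analogue of Theorem~\ref{solsubs} for arbitrary finitely generated groups, so the search for a forbidden subgroup has no systematic starting point. Worse, the only available non-membership tests (Corollary~\ref{parikh}, Corollary~\ref{kS(n,k)}) detect obstructions only through bounded sublanguages with stratified-semilinear Parikh images, that is, essentially through abelian shadows of the group, and these are blind to phenomena such as whether $\Z^2 * \Z$ is poly-$\CF$, which the introduction flags as open. Two further difficulties compound this: it is not known whether the poly-$\CF$ groups are closed under free products, a closure property one would want before attacking product-type examples; and the stratification condition itself is notoriously intractable, Ginsburg having noted that no decision procedure is known for recognising stratified semilinear sets, so even certifying a single candidate as non-poly-$\CF$ requires a bespoke linear-algebraic argument of the kind carried out for $S^{(n,k)}$ in Proposition~\ref{S(n,k)}.
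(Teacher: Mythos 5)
The statement you are addressing is a conjecture, and the paper does not prove it: it establishes only the ``if'' direction and some special cases of the ``only if'' direction. Your treatment of the ``if'' direction is correct and identical to the paper's (Muller--Schupp plus Observation~\ref{polyprod} plus the closure properties in Proposition~\ref{closureprops}), and your outline of the contrapositive, forbidden-subgroup strategy for the converse -- certify non-membership by intersecting the word problem with a bounded regular language and applying the stratified-semilinear machinery of Corollaries~\ref{parikh} and~\ref{kS(n,k)} -- is exactly the paper's approach. Your assessment of why the general case is out of reach is also accurate.

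The one genuine error is your claim that this scheme ``yields the conjecture for soluble groups.'' It does not, and the paper is explicit about this: the soluble case is itself stated as an open conjecture (Conjecture~\ref{solconj}). Of the three obstruction types supplied by Theorem~\ref{solsubs}, the first two ($\Z^\infty$ and proper Gc-groups) are dispatched by Lemma~\ref{rankk} and Proposition~\ref{Gc}, but the third -- a finitely generated subgroup $H$ with an infinite normal torsion subgroup $U$ such that $H/U$ is free abelian or a proper Gc-group -- is precisely the one that resists the bounded-language technique. Your phrase ``the torsion extensions should be handled by the same bounded-language technique'' glosses over the fact that the author could only carry this out for a single explicit example (Proposition~\ref{abc}), states outright that she has been unable to rule out case~(ii) of Theorem~\ref{polycf-sol} in general, and introduces Conjecture~\ref{torsionconj} for exactly this reason. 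What the paper actually proves is the soluble conjecture for metabelian groups (where the torsion case reduces to $C_p\wr\Z$ and Proposition~\ref{wreathp} applies) and for torsion-free soluble groups (where it cannot occur); for general soluble groups only the partial classification of Theorem~\ref{polycf-sol} is available.
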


This would generalise both Muller and Schupp's result on context-free groups \cite{MulSch1, MulSch2, Dun}
and the theorem of Holt, Owens and Thomas \cite{HOT}, which says that the word problem of a 
finitely generated group is an intersection of finitely many one-counter languages 
if and only if the group is virtually 
abelian.  A \emph{one-counter language} is a language recognised by a pushdown automaton with
only one stack symbol.

Note that the truth of Conjecture~\ref{polyconj} would imply that if $G$ is poly-$\CF$, then 
$W(G)$ is an intersection of finitely many deterministic context-free languages,
and hence co$W(G)$ is context-free, since the deterministic context-free languages are closed under 
complementation and the context-free languages are closed under union. 

The rest of this section is devoted to proving certain classes of groups to be 
\emph{not} poly-$\cal{CF}$.

\subsection{Some groups which are not poly-$\cal{CF}$}

Holt, Rees, R\"over and Thomas 
proved that a finitely generated nilpotent group or polycyclic group
is co$\CF$ if and only if it is virtually abelian \cite[Theorems~12 and~16]{HRRT},
and that the Baumslag-Solitar group $\BS (m,n)$ is not co$\CF$ if $m\neq \pm n$ \cite[Theorem~13]{HRRT}.
These theorems are all proved using \cite[Proposition~14]{HRRT}, which, as we have mentioned,
has a strictly weaker hypothesis than Proposition~\ref{prop}; so, with no further effort, 
we can obtain analogous results for poly-$\CF$ groups, using Corollary~\ref{parikh}.

\begin{proposition}\label{polynil}
Let $G$ be a polycyclic group or a finitely generated nilpotent group.
Then $G$ is poly-$\cal{CF}$ if and only if it is virtually abelian.
\end{proposition}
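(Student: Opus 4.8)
The plan is to reduce the statement to the virtually abelian case in one direction, which is trivial (virtually abelian groups are finitely generated subgroups of direct products of free abelian groups, hence of direct products of free groups, so they are poly-$\CF$ by the closure properties established in Proposition~\ref{closureprops} and Observation~\ref{polyprod}), and to spend all the work on the converse: if $G$ is a polycyclic or finitely generated nilpotent group that is \emph{not} virtually abelian, then $G$ is not poly-$\CF$. The key tool is Corollary~\ref{parikh}: since the class of poly-$\CF$ groups is closed under passing to finitely generated subgroups (Proposition~\ref{closureprops}), it suffices to exhibit, inside $G$, a finitely generated subgroup whose word problem contains a bounded sublanguage $L\subseteq w_1^*\cdots w_k^*$ for which some permuted Parikh image $\tau(\Phi(L))$ satisfies the three hypotheses of Proposition~\ref{prop}.

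First I would recall that the relevant examples are exactly the ones Holt, Rees, R\"over and Thomas used to prove the analogous co$\CF$ statements \cite[Theorems~12 and~16]{HRRT}. The point emphasised in the paragraph preceding Proposition~\ref{polynil} is that those proofs go through \cite[Proposition~14]{HRRT}, whose hypotheses are \emph{strictly stronger} than those of our Proposition~\ref{prop}: they require a unique $\b$ with $(\a;\b)\in L$ (so our condition (iii) is vacuous) and require $\b(j)\geq k\sigma(\a)$ for \emph{every} coordinate $j$ rather than merely some $j$. So any $L$ that witnesses non-co$\CF$-ness via \cite[Proposition~14]{HRRT} automatically satisfies the weaker hypotheses (i)--(iii) of Proposition~\ref{prop}, and hence, by Corollary~\ref{parikh}, witnesses that the group is simultaneously not co$\CF$ \emph{and} not poly-$\CF$. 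The proof is therefore essentially a citation: re-run the constructions of \cite[Theorems~12 and~16]{HRRT} verbatim, observe that each produces a bounded language in the word problem of a suitable finitely generated subgroup satisfying the hypotheses of Proposition~\ref{prop}, and conclude via Corollary~\ref{parikh}.

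Concretely, the nilpotent case is driven by the discrete Heisenberg group: a finitely generated nilpotent group that is not virtually abelian contains a copy of the integral Heisenberg group $H$, generated by $x,y$ with $[x,y]=z$ central and $z$ of infinite order. Here the commutator relation produces, for words such as $x^m y^m x^{-m} y^{-m}$, a central element $z^{m^2}$, so that cancelling it requires on the order of $m^2$ further letters; choosing $\a$ to record the exponent of $x$ (so $\sigma(\a)\sim m$) and $\b$ the exponent of $z$ (of size $\sim m^2 \gg k\sigma(\a)$) gives exactly the quadratic-versus-linear growth needed for condition (ii), with $f$ unbounded for (iii). For the polycyclic case one uses instead a subgroup of the form $\Z\ltimes_\phi \Z^d$ where $\phi$ acts with an eigenvalue off the unit circle; conjugation then stretches exponents exponentially, again producing a bounded language whose Parikh image grows super-linearly in the required coordinate.

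I expect the only real obstacle to be bookkeeping rather than mathematics: one must verify that for the chosen subgroup there is a genuine bounded sublanguage $L$ of the word problem, and identify the correct permutation $\tau$ and splitting of coordinates into the $\a$-block and the $\b$-block so that $\tau(\Phi(L))$ literally meets hypotheses (i)--(iii). Since the underlying growth estimates are already carried out in \cite[Theorems~12, 16]{HRRT} for the stronger \cite[Proposition~14]{HRRT}, and since our hypotheses are weaker in every respect, no new estimate is required; the argument reduces to quoting those constructions and invoking Corollary~\ref{parikh}. The cleanest write-up simply states that the groups shown not co$\CF$ in \cite[Theorems~12 and~16]{HRRT} are shown not poly-$\CF$ by the identical witnesses, now read through Proposition~\ref{prop} and Corollary~\ref{parikh}.
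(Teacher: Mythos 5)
Your proposal is correct and follows essentially the same route as the paper: the paper's proof likewise handles the converse by citing the constructions in \cite[Theorems~12 and~16]{HRRT}, observing that the bounded languages produced there satisfy the (strictly weaker) hypotheses of Proposition~\ref{prop}, and concluding via Corollary~\ref{parikh}. Your added sketch of the Heisenberg and $\Z\ltimes\Z^d$ witnesses is consistent with what those cited proofs actually do.
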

\begin{proof}
If $G$ is not virtually abelian, then the proofs of Theorems~12 (for $G$ nilpotent)
and~16 (for $G$ polycylic) in \cite{HRRT} show that there exists a regular language $R$ such that 
$\phi\left( W(G)\cap R\right)$ satisfies 
the hypothesis of Proposition~\ref{prop}, and hence $G$ is neither 
co$\CF$ nor poly-$\cal{CF}$ by Corollary~\ref{parikh}.
\end{proof}

The result for nilpotent groups was actually already obtained
by Holt, Owens and Thomas in \cite{HOT},
using what is essentially a special case of Proposition~\ref{prop}. 

The statement of Theorem~13 in \cite{HRRT} is incorrect.  It is claimed that $\BS (m,n)$ is
co$\CF$ if and only if it is virtually abelian, based on the supposition that $\BS (m,n)$ is
virtually abelian if $m = \pm n$.  We now show that if $m = \pm n$, then $\BS (m,n)$ is 
both co$\CF$ and poly-$\CF$.

\begin{proposition}\label{BSmm}
For $m\in \Z\setminus \{0\}$, the Baumslag-Solitar group $\BS (m,\pm m)$ 
is virtually a direct product of two free groups and is thus both co$\cal{CF}$
and $2$-$\cal{CF}$.
\end{proposition}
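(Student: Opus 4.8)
The plan is to prove the single structural fact that $\BS(m,\pm m)$ is virtually a direct product of two finitely generated free groups, and then read off both language-theoretic conclusions from closure results already established. Write $G=\BS(m,n)=\langle a,t\mid ta^mt^{-1}=a^n\rangle$ with $n=\pm m$. The first step is to identify a common normal structure. In both cases $N=\langle a^m\rangle$ is normal: it is centralised by $a$, while $t$ fixes $a^m$ when $n=m$ and inverts it when $n=-m$, so in either case $ta^mt^{-1}\in N$. Killing $N$ makes the defining relator trivial, so $G/N\cong\langle \bar a,\bar t\mid \bar a^m=1\rangle=C_m\ast\Z$, which is virtually free. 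The conjugation action of $G/N$ on $N\cong\Z$ is given by a homomorphism $\alpha:G/N\to\Aut(\Z)=\{\pm1\}$ sending $\bar a\mapsto 1$ and $\bar t\mapsto n/m\in\{\pm1\}$; thus $\alpha$ is trivial when $n=m$ and has kernel of index $2$ when $n=-m$.

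Second, I pass to a finite-index subgroup on which the action is trivial. Since $\ker\alpha$ has finite index in the virtually free group $C_m\ast\Z$, it is itself virtually free and hence contains a free subgroup $F$ of finite rank and finite index (Nielsen--Schreier). Let $\tilde F$ be the preimage of $F$ under $G\to G/N$; it has finite index in $G$ and sits in a short exact sequence $1\to\Z\to\tilde F\to F\to 1$ in which $\Z=N$ is central, because $F\le\ker\alpha$. The key point is that this central extension splits: $F$ is free, so $H^2(F;\Z)=0$, and therefore $\tilde F\cong F\times\Z$. Since $\Z$ is free of rank $1$, $\tilde F$ is a direct product of two finitely generated free groups lying with finite index in $G$; this establishes that $\BS(m,\pm m)$ is virtually a direct product of two free groups.

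Finally, the two conclusions follow formally. By Observation~\ref{polyprod}, since each free group is context-free, the product $F\times\Z$ is $2$-$\CF$; and a finitely generated free group is co$\CF$ (its word problem is deterministic context-free, so its complement is context-free), while co$\CF$ is closed under direct products \cite{HRRT}, so $F\times\Z$ is co$\CF$. As $G$ is a finite-index overgroup of $\tilde F$ and both classes are closed under passing to finite-index overgroups (Proposition~\ref{closureprops}), $G$ is both $2$-$\CF$ and co$\CF$. I expect the main obstacle to be the case $n=-m$, where the action $\alpha$ is nontrivial and one must genuinely descend to $\ker\alpha$ before the central extension can split; verifying the splitting (rather than merely the extension structure) is where the freeness of $F$ does the essential work.
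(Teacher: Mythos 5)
Your proof is correct, and it reaches the structural conclusion by a genuinely different route from the paper. The paper treats the two signs separately and constructs explicit finite-index subgroups: for $\BS(m,m)$ it takes the normal closure of $\langle y\rangle$ in $G/\langle x^m\rangle \cong C_m * \Z$ (index $m$) and invokes the Kurosh Subgroup Theorem to see it is free; for $\BS(m,-m)$ it first descends to an index-$2$ subgroup with presentation $\langle a,b,c \mid a^m=b^m,\,[a^m,c]\rangle$, quotients to $C_m * C_m * \Z$, and again uses Kurosh on the kernel of a map to $C_m\times C_m$ (index $m^2$). You instead work uniformly: $N=\langle a^m\rangle$ is normal with $G/N\cong C_m*\Z$ virtually free, you kill the $\pm 1$ action on $N$ by passing to $\ker\alpha$, pick any finite-index free subgroup $F$ there, and split the resulting central extension $1\to\Z\to\tilde F\to F\to 1$ using freeness of $F$ (equivalently $H^2(F;\Z)=0$). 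Note that the paper's proof also tacitly relies on exactly this splitting step when it asserts $H\cong H/\langle x^m\rangle\times\langle x^m\rangle$, so the essential cohomological input is shared; what differs is how the free quotient is produced. Your version buys uniformity over the sign of $n$ and avoids both the explicit presentations and the Kurosh computation, at the cost of not exhibiting the finite-index subgroup or its index concretely, which the paper's argument does (indices $m$ and $2m^2$ respectively). The language-theoretic conclusions are then drawn exactly as in the paper, from Observation~\ref{polyprod}, closure of co$\CF$ under direct products, and closure of both classes under finite-index overgroups.
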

\begin{proof}
First let $G = \BS (m,m) = \left\langle x,y \mid y^{-1}x^m y = x^m\right\rangle$.
Then $x^m\in Z(G)$ and \[G/\left\langle x^m\right\rangle = \left\langle x, y \mid x^m\right\rangle = C_m * \Z.\]
Let $H/\left\langle x^m\right\rangle$ be the normal closure in 
$G/\left\langle x^m\right\rangle$ of $\left\langle y\right\rangle$.
Then \[|G/\left\langle x^m\right\rangle : H/\left\langle x^m\right\rangle| = m\] and hence ${|G:H|} = m$.  
Since $H/\left\langle x^m\right\rangle$ does not intersect any conjugate of $C_m$, by the 
Kurosh Subgroup Theorem (see for example \cite[III.3.6]{LynSch}),
$H/\left\langle x^m\right\rangle$ is the free product 
of a free group with conjugates of $\Z$, and is thus free.
Since $x^m\in Z(G)$, we have $H \cong H/\left\langle x^m\right\rangle \times \left\langle x^m\right\rangle$.  
Thus $G$ is virtually a direct product of two free groups.

Now let $G = \BS (m,-m) = \left\langle x,y \mid y^{-1}x^m y = x^{-m}\right\rangle.$
Let $K$ be the normal closure in $G$ of $\left\langle x, y^2\right\rangle$, which has 
index $2$ in $G$.  Setting $a = x$, $b = y^{-1}x^{-1}y$ and $c = y^2$ gives
\[ K = \left\langle a,b,c \mid a^m = b^m, [a^m, c]\right\rangle,\]
with $a^m\in Z(K)$.  Now take
\[ H := K/\left\langle a^m\right\rangle = \left\langle a,b,c \mid a^m = b^m = 1\right\rangle = C_m * C_m * \Z.\]
Let $\phi$ be the homomorphism from $H$ to $C_m \times C_m$ given by mapping $a$ onto a generator
of the first $C_m$ and $b$ onto a generator of the second $C_m$, and $c$ onto the identity.
Then the intersection of $\ker \phi$ with every conjugate of $\left\langle a\right\rangle$ 
and $\left\langle b\right\rangle$is trivial.  
Thus $\ker \phi$ is free, again by the Kurosh Subgroup Theorem.  
Also, ${| H : \ker \phi |} = {| C_m \times C_m |} = m^2$.  
Let $K_1$ be the preimage of $\ker \phi$ in $K$.  Since $\ker \phi$ is free and 
$\left\langle a^m\right\rangle\in Z(H)$, 
$K_1$ is isomorphic to $\ker \phi \times \left\langle a^m\right\rangle$.  Also, $K_1$ has finite index in $K$, 
and hence also in $G$, since $\ker \phi$ has finite index in $H = K/\left\langle a^m\right\rangle$.
Thus $G$ is virtually a direct product of two free groups.

Hence $G$ is $2$-$\cal{CF}$ by Observation~\ref{polyprod}, and co$\cal{CF}$ by 
the fact that the co$\CF$ groups are closed under taking finite direct products \cite[Proposition~6]{HRRT}.
\end{proof}

We can now determine which Baumslag-Solitar groups are poly-$\cal{CF}$.

\begin{proposition}\label{polyBS}
The Baumslag-Solitar group $\BS (m,n)$ is poly-$\cal{CF}$ or co$\CF$ if and only if $m = \pm n$.
\end{proposition}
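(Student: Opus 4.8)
The plan is to handle the two directions separately, with all the real content of the ``if'' direction already supplied by earlier results. If $m = \pm n$, then Proposition~\ref{BSmm} shows that $\BS(m,n)$ is virtually a direct product of two free groups, hence both co$\CF$ and $2$-$\CF$, and in particular poly-$\CF$. It remains to prove the ``only if'' direction, which I would establish in contrapositive form: if $m \neq \pm n$, then $\BS(m,n)$ is neither co$\CF$ nor poly-$\CF$.

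For the contrapositive I would reuse, rather than rebuild, the argument already in the literature. Holt, Rees, R\"over and Thomas establish in \cite[Theorem~13]{HRRT} that $\BS(m,n)$ is not co$\CF$ when $m \neq \pm n$, and their proof works by producing a regular language $R$ for which the Parikh image of $W(G)\cap R$, after a suitable permutation $\tau$ of coordinates, meets the hypothesis of \cite[Proposition~14]{HRRT}. As observed in the discussion following Proposition~\ref{prop}, that hypothesis is strictly stronger than the hypothesis of Proposition~\ref{prop}: requiring a \emph{unique} $\b$ with $(\a;\b)\in L$ renders our condition~(iii) vacuous (for any choice of unbounded $f$), while requiring $\b(j)\geq k\sigma(\a)$ for \emph{every} $j$ immediately gives our condition~(ii), and existence of $\b$ gives our condition~(i). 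Consequently the very same $R$ and $\tau$ of \cite{HRRT} show that $\tau(\Phi(W(G)\cap R))$ satisfies the hypothesis of Proposition~\ref{prop}, and Corollary~\ref{parikh} then yields at once that $\BS(m,n)$ is neither co$\CF$ nor poly-$\CF$. This mirrors exactly the strategy of Proposition~\ref{polynil}.

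The only verification I expect to require attention is confirming that the construction of \cite[Theorem~13]{HRRT} genuinely fits the framework of Corollary~\ref{parikh}: that $W(G)\cap R$ is bounded and that its (permuted) Parikh image satisfies conditions~(i)--(iii) of Proposition~\ref{prop}. Since this is precisely the computation underlying the cited theorem, no new difficulty arises; the crucial feature is that conjugation by $y$ distorts powers of $x$ geometrically with ratio $n/m\neq \pm1$ (after using $\BS(m,n)\cong\BS(n,m)$ to assume $|m|<|n|$), so that the exponent datum grows faster than any fixed linear multiple of the remaining coordinates, which is exactly what condition~(ii) demands. The substantive point of the present proposition is therefore not a new estimate but the logical observation that the weaker hypothesis of Proposition~\ref{prop} upgrades the non-co$\CF$ conclusion of \cite{HRRT} to the non-poly-$\CF$ conclusion simultaneously.
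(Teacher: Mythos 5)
Your proposal is correct and follows essentially the same route as the paper: the ``if'' direction is exactly Proposition~\ref{BSmm}, and the ``only if'' direction reuses the construction from the proof of \cite[Theorem~13]{HRRT}, observing that its conclusion satisfies the (strictly weaker) hypothesis of Proposition~\ref{prop} so that Corollary~\ref{parikh} yields both non-co$\CF$ and non-poly-$\CF$ at once. The paper's proof is just a terser statement of this same argument.
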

\begin{proof}
The proof of Theorem~13 in \cite{HRRT} shows that if $G = \BS (m,n)$ with 
${m\neq \pm n}$, then $W(G)$ can be intersected with a regular language to give a 
sublanguage satisfying the hypothesis of Proposition~\ref{prop}, and so $W(G)$ is neither 
co$\cal{CF}$ nor poly-$\cal{CF}$ by Corollary~\ref{parikh}.
\end{proof}

\subsection{Free abelian groups and wreath products}\label{wreath-sec}

The obvious application of Proposition~\ref{L(k)} to word problems of groups is to the free abelian groups.  

\begin{lemma}\label{rankk}
A free abelian group of rank $k$ is $k$-$\cal{CF}$ but not $(k-1)$-$\cal{CF}$.
\end{lemma}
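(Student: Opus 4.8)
The plan is to prove the two halves separately, using the machinery already built up in the preceding sections. Let $A = \Z^k$, generated freely by $x_1, \ldots, x_k$.

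For the upper bound (that $A$ is $k$-$\CF$), I would exhibit $\Z^k$ as a subgroup of a direct product of $k$ free groups. Indeed, $\Z$ is free of rank $1$, so $\Z^k$ is itself a direct product of $k$ copies of the (free) group $\Z$. By the remark immediately following Observation~\ref{polyprod}, a direct product of $k$ finitely generated free groups is $k$-$\CF$, so $\Z^k$ is $k$-$\CF$ directly. (Since being $k$-$\CF$ is insensitive to the choice of generators by Proposition~\ref{closureprops}, there is nothing further to check here.)

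The substantive content is the lower bound: that $A$ is \emph{not} $(k-1)$-$\CF$. The natural approach is to intersect $W(A)$ with a well-chosen regular language whose bounded structure mirrors the language $L^{(k)}$ from Section~\ref{L(k) section}, and then invoke Corollary~\ref{polyParikh} together with Theorem~\ref{L(k)}. Concretely, I would take the regular language
\[ R = x_1^* \cdots x_k^* \, x_1^{-*} \cdots x_k^{-*}, \]
i.e. words of the form $x_1^{n_1}\cdots x_k^{n_k} x_1^{-m_1}\cdots x_k^{-m_k}$. A word of this form lies in $W(A)$ precisely when $n_i = m_i$ for each $i$, because $A$ is free abelian on the $x_i$. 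The key point is that the commutative image $\Phi(W(A)\cap R)$, as a subset of $\N_0^{2k}$ recorded in the order (positive powers, then inverse powers), is exactly the set $S^{(k)} = \{\v : \v(i)=\v(k+i)\}$ of Section~\ref{L(k) section}. If $A$ were $(k-1)$-$\CF$, then by Proposition~\ref{polyclose} so would be $W(A)\cap R$ (intersection with a regular language preserves the $(k-1)$-$\CF$ property), and then Corollary~\ref{polyParikh} would force $S^{(k)}$ to be an intersection of $k-1$ stratified semilinear sets. But Theorem~\ref{L(k)} states precisely that $S^{(k)}$ is not such an intersection, giving the contradiction.

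The main obstacle, such as it is, lies in the bookkeeping of the alphabet: the word problem of $A$ is over the alphabet $\{x_1^{\pm 1}, \ldots, x_k^{\pm 1}\}$, whereas $L^{(k)}$ was defined over $2k$ distinct symbols $a_1, \ldots, a_{2k}$. I would handle this by treating the $k$ letters $x_i$ and the $k$ inverse letters $x_i^{-1}$ as the $2k$ formal symbols, so that the Kleene-star expression $R$ above realises the bounded shape $w_1^* \cdots w_{2k}^*$ required by Corollary~\ref{polyParikh}, with $w_i = x_i$ and $w_{k+i} = x_i^{-1}$. Once this identification is made explicit, the equality $\Phi(W(A)\cap R) = S^{(k)}$ is immediate from the defining relations of $A$, and no permutation $\tau$ is even needed since the ordering already matches that of $S^{(k)}$. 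The only care required is to confirm that $R$ is genuinely regular (it is a product of star-languages of single letters) and that $W(A)\cap R$ consists of reduced-in-shape words whose commutative image is read off in the intended coordinate order.
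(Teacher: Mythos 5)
Your proposal is correct and follows essentially the same route as the paper: $\Z^k$ is a direct product of $k$ copies of $\Z$ for the upper bound, and for the lower bound one intersects $W(\Z^k)$ with the regular language $x_1^*\cdots x_k^*\,x_1^{-*}\cdots x_k^{-*}$ to recover exactly $L^{(k)}$ and then applies Theorem~\ref{L(k)} together with closure under intersection with regular languages. The only cosmetic difference is that you route the final step explicitly through $S^{(k)}$ and Corollary~\ref{polyParikh}, whereas the paper cites the language-level statement of Theorem~\ref{L(k)} directly; both are valid.
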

\begin{proof}
The group $\Z^k$ is a direct product of $k$ free groups, and is hence $k$-$\cal{CF}$.  
Let $\{x_1,\ldots,x_k\}$ be a generating set for $\Z^k$ and let $X_i$ denote the inverse of $x_i$.  
Consider $L = W(\Z^k)\cap (x_1^*\ldots x_k^* X_1^*\ldots X_k^*)$.  This is precisely the language 
$L^{(k)} = \{x_1^{n_1}\ldots x_k^{n_k} X_1^{n_1}\ldots X_k^{n_k} \mid n_i\in \N_0\}$ defined in 
Section \ref{L(k) section}.  Thus, by Proposition~\ref{L(k)}, $L$ is not $(k-1)$-$\cal{CF}$.  Since $L$ 
is the intersection of $W(\Z^k)$ with a regular language, this implies that $\Z^k$ is not $(k-1)$-$\cal{CF}$.
\end{proof}

The class of co$\cal{CF}$ groups is closed under taking restricted standard 
wreath products with context-free top group \cite[Theorem~10]{HRRT}.  
In contrast, we have the following result for poly-$\cal{CF}$ groups.

\begin{proposition}\label{ZwrZ}
The restricted standard wreath product $\Z\wr \Z$ is not poly-$\cal{CF}$.
\end{proposition}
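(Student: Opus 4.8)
The plan is to realize $\Z\wr\Z$ concretely and then intersect its word problem with a suitable regular language to expose a subset of $\N_0^{r+s}$ satisfying the hypothesis of Proposition~\ref{prop}, so that Corollary~\ref{parikh} delivers the result. Write $\Z\wr\Z=\langle t\rangle\ltimes\bigoplus_{i\in\Z}\langle a_i\rangle$, where $t$ is the generator of the top group $\Z$ and $a=a_0$ generates the copy of the base $\Z$ at position $0$, with $a_i=t^{-i}at^i$. A natural generating set is $X=\{t,a\}$. The key feature to exploit is that an element of the base group is determined by the finitely many exponents $(\ldots,k_{-1},k_0,k_1,\ldots)$ recording the total power of $a$ deposited at each position, and a word represents the identity exactly when the top-level $t$-exponent is zero \emph{and} every $k_i$ is zero.

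First I would choose a family of test words that load large, widely spaced exponents onto distinct lattice positions. A convenient shape is
\begin{equation}
w=t^{N}a^{m_0}t^{-1}a^{m_1}t^{-1}\cdots t^{-1}a^{m_{N}}t^{-(0)}\cdot(\text{inverse block}),
\end{equation}
where the $t$-moves walk the cursor across positions $0,1,\ldots,N$, depositing $a^{m_j}$ at position $j$, and a matching inverse block is required to cancel each deposit. The point is that the word problem, restricted to words of this combinatorial pattern, forces a system of equations $m_j=m'_j$ relating the exponents in the forward block to those in the cancelling block, on top of the constraint that the total $t$-exponent returns to $0$. Intersecting $W(\Z\wr\Z)$ with the regular language $R$ describing this fixed pattern of $t$'s and $a$'s, the image $\Phi(W(\Z\wr\Z)\cap R)$ becomes a set of vectors $(\a;\b)$ in which the $\a$-coordinates record the ``input'' exponents and the $\b$-coordinates record quantities that must grow linearly in the $\a$'s — precisely the growth demanded by condition (ii) — while distinct solutions are forced to differ by at least some unbounded amount $f(k)$, giving condition (iii).

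The engineering task is to design $R$ so that the three conditions of Proposition~\ref{prop} all hold simultaneously after a suitable permutation $\tau$ of coordinates (as permitted by Corollary~\ref{parikh}). Concretely I would arrange that for each target parameter $k$ there is a choice of top-level displacement making some base-position exponent forced to be at least $k\sigma(\a)$ (condition (ii)), that such a word exists (condition (i)), and that the lattice structure of $\Z\wr\Z$ — the fact that the cursor position is itself determined by a running sum of $t$-exponents — creates the rigidity needed so that two distinct filling vectors $\b\neq\b'$ must differ substantially in some coordinate (condition (iii)), with $f$ an unbounded function of $k$. The unboundedness of $\Z$ as the top group is what supplies arbitrarily large spacing and hence the unbounded $f$; this is exactly the feature absent from the virtually-abelian case and is what should obstruct poly-$\CF$-ness.

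The main obstacle I anticipate is verifying condition (iii) rigorously: one must show that the mapping from the top-group ``itinerary'' to the tuple of base exponents is injective enough that genuinely different identity words of the prescribed pattern cannot have base-exponent tuples that agree up to a bounded error. This amounts to a careful bookkeeping argument tracking where the cursor sits after each $t^{\pm1}$ and how much $a$-mass is deposited there; the noncommutativity of the semidirect product is what makes this work, but it also makes the combinatorial accounting the delicate part. Once the pattern word and the regular language $R$ are pinned down so that $\tau(\Phi(W(\Z\wr\Z)\cap R))$ provably meets hypotheses (i)--(iii) with an explicit unbounded $f$, Corollary~\ref{parikh} immediately yields that $W(\Z\wr\Z)$ is neither co$\CF$ nor poly-$\CF$, completing the proof.
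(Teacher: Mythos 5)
Your approach cannot work, and the obstruction is structural rather than a matter of missing bookkeeping. The conclusion of Corollary~\ref{parikh} is that the language in question is neither poly-$\CF$ \emph{nor} co$\CF$; but $\Z\wr\Z$ \emph{is} a co$\CF$ group, since the co$\CF$ groups are closed under restricted standard wreath products with context-free top group (\cite[Theorem~10]{HRRT}, quoted in the paper immediately before this proposition). Consequently $W(\Z\wr\Z)\cap R$ is co$\CF$ for every regular $R$ (its complement is the union of a context-free language with a regular one), so $\Phi\bigl(W(\Z\wr\Z)\cap w_1^*\cdots w_n^*\bigr)$ is always the complement of a stratified semilinear set, hence semilinear, hence can never satisfy the hypotheses of Proposition~\ref{prop} --- no matter how cleverly you choose the pattern, the regular language $R$, or the permutation $\tau$. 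Concretely, the step that fails is conditions (ii) and (iii): all defining relations of $\Z\wr\Z$ are commutators, conjugation by $t$ merely shifts positions without rescaling exponents, and exponent sums are preserved linearly, so nothing ever forces a base exponent to be at least $k\sigma(\a)$ for large $k$, and nothing creates unbounded gaps between admissible completions $\b,\b'$. (This machinery succeeds for Baumslag--Solitar and Gc-groups precisely because conjugation there scales exponents exponentially or introduces $p$-adic divisibility constraints; $\Z\wr\Z$ has no such mechanism.) Your proposal also defers exactly this verification as ``the main obstacle I anticipate,'' so even on its own terms it is a plan rather than a proof.

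The paper's argument is entirely different and essentially two lines: the base group of $\Z\wr\Z$ contains free abelian subgroups of every finite rank, $\Z^k$ is $k$-$\CF$ but not $(k-1)$-$\CF$ (Lemma~\ref{rankk}, which rests on the hierarchy machinery for the languages $L^{(k)}$ and stratified semilinear sets, not on Proposition~\ref{prop}), and the poly-$\CF$ groups are closed under taking finitely generated subgroups. So if $\Z\wr\Z$ were $m$-$\CF$ for some $m$, then $\Z^{m+1}$ would be $m$-$\CF$, a contradiction. To salvage your write-up you would need to abandon the Proposition~\ref{prop} route entirely and argue via the $\Z^k$ subgroups instead.
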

\begin{proof}
Since $\Z\wr \Z$ contains free abelian subgroups of rank $k$ for all $k\in \N$,
this follows immediately from Lemma~\ref{rankk} and the fact that the poly-$\cal{CF}$
groups are closed under taking finitely generated subgroups.
\end{proof}

A further result on wreath products will be useful when we come to consider metabelian groups.  
It is our first application of Theorem~\ref{L(n,k)}.

\begin{proposition}\label{wreathp}
For any $p\in \N\setminus \{1\}$, the restricted standard wreath product $C_p\wr \Z$ is not poly-$\cal{CF}$.
\end{proposition}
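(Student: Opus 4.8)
The plan is to adapt the argument of Lemma~\ref{rankk} to $G = C_p\wr\Z$, replacing the role of the free abelian group and the sets $S^{(k)}$ by the sets $S^{(p,k)}$ together with Theorem~\ref{L(n,k)}, taken with $n = p$. Fix the generators $a, t$ of $G$, where $a$ generates the copy of $C_p$ at position $0$ of the base group and $t$ generates the acting $\Z$, so that $a^p = 1$ and reading a word from left to right amounts to moving a cursor along $\Z$ (via $t^{\pm 1}$) and incrementing the lamp under the cursor (via $a^{\pm 1}$); such a word represents the identity exactly when the cursor returns to $0$ and every lamp has been incremented a multiple of $p$ times. For each $k$ I would construct a regular language $R = w_1^*\ldots w_{2pk}^*$, whose blocks $w_i$ are short ``lamp-trail'' words over $\{a, a^{-1}, t, t^{-1}\}$ (of the shape $at$ and $t^{-1}a^{-1}$, together with fixed powers of $t$ used to place the trails), and then intersect with $W(G)$.

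The key feature of these $w_i$ is that $w_i^{m_i}$ deposits a unit increment along a whole interval of lamp positions rather than at a single site. Consequently the requirement that all lamps return to $0$ modulo $p$ while the cursor returns to $0$ translates, after reordering the coordinates by a suitable permutation $\tau$, into a system of integer equalities among the exponents $m_i$. I expect this system to cut out precisely a $k$-dimensional linear subset of $S^{(p,k)}$: the $2k$ blocks in the definition of $S^{(p,k)}$ correspond to a forward sweep and a matching backward sweep for each of $k$ counters, while the repetition by $n = p$ within each block reflects the fact that, because $a$ has order $p$, each lamp must be revisited $p$ times before it is neutralised, so that $p$ consecutive exponents are forced equal. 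Thus $\tau(\Phi(W(G)\cap R))$ will be a $k$-dimensional linear subset of $S^{(p,k)}$.

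Granting this, the conclusion is immediate. If $G$ were $(k-1)$-$\CF$, then $W(G)\cap R$ would be $(k-1)$-$\CF$ by Proposition~\ref{polyclose}, so by Corollary~\ref{polyParikh} the set $\Phi(W(G)\cap R)$, and hence its image under the semilinearity-preserving permutation $\tau$, would be an intersection of $k-1$ stratified semilinear sets, contradicting Corollary~\ref{kS(n,k)}. Since $k$ is arbitrary, $G$ is not $(k-1)$-$\CF$ for any $k$, and so is not poly-$\CF$. The main obstacle is the verification of the Parikh image. In contrast with the free abelian case of Lemma~\ref{rankk}, the lamp-trails belonging to different counters necessarily overlap geometrically on $\Z$ — indeed $G$ contains no free abelian subgroup of rank $2$, so no genuinely independent counters are available — and the delicate point is to check that, for an identity word, these overlaps cancel exactly when the defining equalities of $S^{(p,k)}$ hold, and that no exponent vectors outside $S^{(p,k)}$ can arise. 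This is where the order-$p$ relation and the precise placement of the trails have to be controlled.
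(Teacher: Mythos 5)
Your overall strategy --- cutting $W(G)$ down to a bounded sublanguage whose Parikh image is a $k$-dimensional linear subset of some $S^{(n,k)}$ and then invoking Corollary~\ref{kS(n,k)} together with Corollary~\ref{polyParikh} --- is exactly the paper's, and your concluding logic is sound. But the construction at the heart of the argument has a genuine gap, on two counts. First, the repetition factor $n$ is misdiagnosed: it does not come from the order $p$ of the lamps. Since your alphabet contains both the lamp generator and its inverse, a lamp incremented once and decremented once is neutralised for \emph{every} $p$, so nothing forces a lamp to be ``revisited $p$ times''; the order $p$ enters only through the lamp generator being non-trivial. In the paper's proof the relevant factor is $n=2$, independent of $p$, and it arises purely syntactically: each conjugate $b^{a^{m_i}}$ (in the paper's notation, where $a$ generates the acting $\Z$ and $b$ the lamp) is spelled $A^{m_i}b\,a^{m_i}$, so each position $m_i$ is recorded twice and the Parikh image lands in $S^{(2,k)}$. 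Second, and more seriously, interval-depositing trails of the shape $(at)^{m}$ and $(t^{-1}a^{-1})^{m}$ (in your notation) destroy the information you need: such a trail lights a block of consecutive lamps whose endpoints are \emph{cumulative} sums of the exponents, and the requirement that all lamps cancel then constrains only those sums. For instance $(at)^{m_1}(at)^{m_2}(t^{-1}a^{-1})^{m_3}(t^{-1}a^{-1})^{m_4}$ represents the identity exactly when $m_1+m_2=m_3+m_4$, a three-dimensional, one-counter condition bearing no resemblance to $S^{(p,2)}$. So $\Phi(W(G)\cap R)$ for your regular $R$ will not be contained in $S^{(p,k)}$ up to permutation, and the ``delicate point'' you defer is precisely where the construction fails.

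The missing idea is the device that pins each counter to a \emph{distinct, individually recorded} lamp position. The paper uses blocks $A^{m_i}b\,a^{n_i}$ (a single lamp toggled at position $m_i$, cursor returned to the origin) and intersects $W(G)$ with a $2$-$\CF$, not regular, language enforcing $m_i=n_i$ and the strict monotonicity $n_i<m_{i+1}$. The monotonicity makes the $2k$ conjugates of $b$ pairwise distinct, so each $b^{a^{m_i}}$ must cancel against a unique $B^{a^{m_{k+j}}}$, forcing $m_i=m_{k+i}$ and yielding exactly a $k$-dimensional linear subset of $S^{(2,k)}$. These side conditions cannot be imposed by a regular language, so your framework of intersecting with a single bounded regular language $w_1^*\cdots w_{2pk}^*$ is too weak as stated; one must first intersect $W(G)$ with a fixed auxiliary poly-$\CF$ language (costing only an additive constant in the final ``not $(k-c)$-$\CF$'' count) and only then pass to the Parikh image.
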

\begin{proof}  
Let $G = \left\langle b\right\rangle \wr \left\langle a\right\rangle = C_p\wr \Z$, with $p>1$  
and let $A$ and $B$ be the inverses of $a$ and $b$ respectively.
For $k\in \N$, let $W_k = (A^* b a^*)^k (A^* B a^*)^k$ and let $M_k$ be the sublanguage 
of $W_k$ consisting of all those words 
\[w = (A^{m_1} b a^{n_1}) \ldots (A^{m_k} b a^{n_k}) 
(A^{m_{k+1}} B a^{n_{k+1}}) \ldots (A^{m_{2k}} B a^{n_{2k}})\] 
satisfying the following:
(i) $m_i = n_i$ for all $i$;
(ii) $n_i < m_{i+1}$ for $i\notin \{k,2k\}$.
Each of (i) and (ii) can be checked by a pushdown automaton, so $M_k$ is the intersection of 
two context-free languages and the regular language $W_k$ and is thus $2$-$\cal{CF}$.

Now let $L_k = W\left(G,\{a,b\}\right)\cap M_k$.  Then $L_k$ consists of all words of the form 
\[b^{a^{m_1}}\cdots b^{a^{m_k}} B^{a^{m_{2k+1}}}\cdots B^{a^{m_{2k}}} =_G 1,\] 
with $m_i\in \N_0$ for all $i$, and $m_i < m_{i+1}$ for $i\notin \{k,2k\}$.
Since the conjugates of $b$ in such a word are all distinct, for each $1\leq i\leq k$ we must 
have some $1\leq j\leq k$ such that $m_{k+j} = m_i$.  
But since $m_i < m_{i+1}$ and $m_{k+i} < m_{k+i+1}$ for all $1\leq i\leq k-1$, this means 
$m_i = m_{k+i}$ for all $1\leq i\leq k-1$.

When we take $\Phi(L_k)$, we can ignore the $b$'s and $B$'s, 
since these would contribute nothing to the aspects of the structure of the resulting subset 
of $\N_0^{6k}$ that interest us.  
For our purposes it is equivalent and more straightforward to consider $\Phi(L_k)$ as a subset of 
$\N_0^{4k}$, thus:
\[\Phi(L_k) = \{ (m_1,m_1,\ldots,m_k,m_k,m_1,m_1,\ldots,m_k,m_k) \mid m_i\in \N_0, m_i < m_{i+1}\}.\]
We see that $\Phi(L_k)$ is a $k$-dimensional subset of the set $S^{(2,k)}$ studied in Section~\ref{L(n,k) section}.
Thus $\Phi(L_k)$ cannot be expressed as an intersection of $k-1$ stratified semilinear sets, 
by Corollary~\ref{kS(n,k)}. 
Hence $L_k$ is not $(k-1)$-$\cal{CF}$, by Corollary~\ref{polyParikh}.  
Since $L_k$ is the intersection of $W(G)$ with a $2$-$\cal{CF}$ language, this implies that $W(G)$ 
is not $(k-3)$-$\cal{CF}$ for any $k\in \N$ and so $G$ is not poly-$\cal{CF}$.
\end{proof}

\subsection{The groups $G(\c)$}\label{Gc-sec}

The groups $G(\c)$ were defined in \cite{BH} and play an important role in the main results
of that paper, which we shall be applying in order to prove certain cases of Conjecture~\ref{polyconj}.

For $\c = (c_0,\ldots,c_s)\in \Z^{s+1}$ with $s \ge 1$, $c_0,c_s\neq 0$ and
$\gcd(c_0,\ldots,c_s) = 1$,
the group $G(\c)$ is defined by the presentation
$\left\langle a, b \mid {\cal R}_{\c}\right\rangle$, where
\[{\cal R}_{\c} = \left\{[b, b^{a^i}] \; (i\in \Z), \; 
b^{c_0}(b^a)^{c_1}\cdots(b^{a^s})^{c_s}\right\}.\]
We call such groups {\em Gc-groups}, and when we refer to the
Gc-group $G(\c)=\left\langle x,y\right\rangle$, we assume that
$\c \in \Z^{s+1}$ satisfies the above conditions, and that
$x$ replaces $a$ and $y$ replaces $b$ in the above definition of
$G(\c)$.  Note that here we depart from our usual convention of denoting the 
$i$-th component of $\c$ by $\c(i)$, as it makes the notation more pleasant.
A Gc-group is called \emph{proper} if it is not virtually abelian.

As an example,  if $\c = (-m,1)$ then $G(\c) =  \BS (1,m)$;
so the soluble Baumslag-Solitar groups  are all Gc-groups.

The main result in this section will be that a Gc-group is poly-$\cal{CF}$ 
if and only if it is virtually abelian.  We simplify the notation by setting $b_i = b^{a^i}$ for 
all $i\in \Z$, and $B = \left\langle b_i \mid i\in \Z\right\rangle$.
Since $B$ is an abelian normal subgroup of $G(\c)$ and $G(\c)/B\cong \left\langle a\right\rangle$,
we see that Gc-groups have derived length at most $2$.

\begin{lemma}\label{Gcpoly}
Let $G = G(\c)$ be a Gc-group with $|c_0| = |c_s| = 1$.
Then $G$ is polycyclic.
\end{lemma}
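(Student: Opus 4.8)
The plan is to use the extension $1 \to B \to G \to \langle a\rangle \to 1$, where $B = \langle b_i \mid i \in \Z\rangle$ is the abelian normal subgroup already identified in the excerpt. The quotient $G/B \cong \langle a\rangle \cong \Z$ is polycyclic, and the class of polycyclic groups is closed under extensions, so it suffices to prove that $B$ is polycyclic. Since a finitely generated abelian group is automatically polycyclic, everything reduces to showing that $B$ is finitely generated; in fact I would prove the sharper statement $B = \langle b_0, b_1, \ldots, b_{s-1}\rangle$.

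To this end, conjugating the defining relation $b^{c_0}(b^a)^{c_1}\cdots(b^{a^s})^{c_s}=1$ by $a^j$ yields, for every $j \in \Z$, the relation
\[ b_j^{c_0}\, b_{j+1}^{c_1} \cdots b_{j+s}^{c_s} = 1 \]
among the pairwise-commuting generators $b_i$. The argument then splits into two symmetric reductions. Using $|c_s| = 1$, I rewrite this as $b_{j+s}^{c_s} = (b_j^{c_0}\cdots b_{j+s-1}^{c_{s-1}})^{-1}$; since $c_s = \pm 1$, this expresses $b_{j+s}$ in terms of $b_j, \ldots, b_{j+s-1}$, and running $j = 0, 1, 2, \ldots$ and inducting upward shows that every $b_i$ with $i \geq s$ lies in $\langle b_0, \ldots, b_{s-1}\rangle$. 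Symmetrically, using $|c_0| = 1$, the same relation gives $b_j^{c_0} = (b_{j+1}^{c_1}\cdots b_{j+s}^{c_s})^{-1}$, expressing $b_j$ in terms of $b_{j+1}, \ldots, b_{j+s}$; running $j = -1, -2, \ldots$ and inducting downward shows that every $b_i$ with $i < 0$ also lies in $\langle b_0, \ldots, b_{s-1}\rangle$. Hence $B$ is generated by the finite set $\{b_0, \ldots, b_{s-1}\}$.

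Being finitely generated and abelian, $B$ is polycyclic, and so $G$ is polycyclic. There is no deep obstacle; the only point requiring care is that \emph{both} hypotheses are genuinely used, so the two reductions must be kept separate: $|c_s| = 1$ trims the generators of large index while $|c_0| = 1$ trims those of negative index, and neither reduction can be performed unless the relevant end coefficient is a unit. This is precisely why the restriction is needed: for $\BS(1,m)$, which is $G(\c)$ with $\c = (-m,1)$, the trailing coefficient $c_0 = -m$ fails to be a unit when $|m| > 1$, the downward reduction breaks down, and $B \cong \Z[1/m]$ is not finitely generated, so $G$ is not polycyclic.
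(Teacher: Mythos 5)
Your proof is correct and takes essentially the same approach as the paper: both use the fact that $|c_0|=|c_s|=1$ to express, via the conjugated defining relation, every $b_i$ in terms of $s$ consecutive generators, concluding that $B$ is finitely generated abelian and hence that $G \rhd B \rhd \{1\}$ witnesses polycyclicity. The only differences are cosmetic (you generate by $b_0,\ldots,b_{s-1}$ where the paper uses $b_1,\ldots,b_s$, and you spell out the two inductions that the paper compresses into one line).
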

\begin{proof}
The relation $b_0^{\pm 1}b_1^{c_1}\cdots b_{s-1}^{c_{s-1}}b_s^{\pm 1} = 1$ implies that 
$b_0, b_{s+1}\in \langle b_1,\ldots,b_s\rangle$ and hence 
$b_i\in \langle b_1,\ldots,b_s\rangle$ for all $i$.
Hence $B = \left\langle b_1,\ldots,b_s\right\rangle$; so $G\rhd B\rhd \{1\}$ is a normal 
series for $G$ with finitely generated abelian factors and $G$ is polycyclic.
\end{proof}

Unsurprisingly, different elements of $\Z^{s+1}$ can produce isomorphic Gc-groups:

\begin{lemma}\label{reverse}
Let $G = G(\c)$, where $\c = (c_0,\ldots,c_s)$ and let $\c' = (c_s,c_{s-1},\ldots,c_0)$.
Then $G(\c)\cong G(\c')$.
\end{lemma}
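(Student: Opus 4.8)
The plan is to exhibit an explicit isomorphism induced by the map that inverts $a$ and fixes $b$. The underlying geometric idea is that replacing $a$ by $a^{-1}$ reverses the indexing of the generators $b_i = b^{a^i}$ of the abelian normal subgroup $B$, and a single conjugation realigns the reversed defining relator of $G(\c)$ with that of $G(\c')$.

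First I would record the elementary identity $(b_i)^{a^t} = b_{i+t}$ for all $i,t\in\Z$, which is immediate from $b_i = b^{a^i}$, and note that $\c'$ is admissible: $c'_0 = c_s$ and $c'_s = c_0$ are nonzero and $\gcd(c'_0,\ldots,c'_s) = \gcd(c_0,\ldots,c_s) = 1$, so $G(\c')$ is a genuine Gc-group. Next I would define $\phi\colon G(\c)\to G(\c')$ on generators by $\phi(a) = a^{-1}$ and $\phi(b) = b$, and verify via von Dyck's theorem that $\phi$ kills every relator of $G(\c)$. Under $\phi$ we have $b_i = b^{a^i}\mapsto b^{a^{-i}} = b_{-i}$, so the commuting relators $[b_0,b_i]$ map to $[b_0,b_{-i}]$, which vanish in $G(\c')$ since $B$ is abelian, and the main relator $b_0^{c_0}b_1^{c_1}\cdots b_s^{c_s}$ maps to $b_0^{c_0}b_{-1}^{c_1}\cdots b_{-s}^{c_s} = \prod_{j=0}^{s} b_{-j}^{c_j}$.

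The key step is to identify this image with a conjugate of the defining relator $r' = b_0^{c_s}b_1^{c_{s-1}}\cdots b_s^{c_0} = \prod_{i=0}^{s} b_i^{c_{s-i}}$ of $G(\c')$. Conjugating by $a^{-s}$ and using $(b_i)^{a^{-s}} = b_{i-s}$ gives $(r')^{a^{-s}} = \prod_{i=0}^{s} b_{i-s}^{c_{s-i}} = \prod_{j=0}^{s} b_{-j}^{c_j}$ after the substitution $j = s-i$, which is exactly $\phi$'s image of the main relator. Since $r' = 1$ in $G(\c')$, so is its conjugate, and hence $\phi$ sends the main relator to $1$; thus $\phi$ is a well-defined homomorphism. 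Applying the identical construction to $\c'$, and using $(\c')' = \c$, yields a homomorphism $\psi\colon G(\c')\to G(\c)$ with $\psi(a) = a^{-1}$, $\psi(b) = b$. The composites $\psi\phi$ and $\phi\psi$ fix $a$ and $b$, hence are the respective identity maps, so $\phi$ is an isomorphism.

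The only real obstacle is the reindexing bookkeeping in the key step: one must track how inverting $a$ reverses the index and then verify that a single conjugation by $a^{-s}$ aligns $\phi(r_\c)$ precisely with $r'$. Everything else is routine relator-checking, and the symmetry $(\c')' = \c$ supplies the inverse homomorphism for free.
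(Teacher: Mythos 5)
Your proof is correct and is essentially the paper's argument: both rest on inverting $a$ to reverse the indexing of the $b_i$ and then absorbing the resulting shift by $s$. The only cosmetic difference is where that shift lives — the paper builds it into the choice of second generator ($y = b_s$, so the relator matches on the nose), while you keep $b$ fixed and instead observe that $\phi$ sends the defining relator to the conjugate $(r')^{a^{-s}}$; the two maps differ by an inner automorphism.
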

\begin{proof}
Let $G = G(\c) = \left\langle a,b\right\rangle$ and let $x = a^{-1}$ and $y = b_s$.  Then 
$y^{x^i} = b_{s-i}$ for $i\in \Z$, so
$ b_0^{c_0}b_1^{c_1}\cdots b_s^{c_s} = y^{c_s} (y^x)^{c_{s-1}}\cdots (y^{x^s})^{c_0}$.\\
Hence $G(\c)\cong \left\langle x,y \right\rangle = G(\c')$. 
\end{proof}

The following proposition, proved in \cite[Proposition~2.4]{BH}, gives a useful embedding 
of a Gc-group in a semidirect product 
$\Q^s\rtimes \Z$.  

\begin{proposition}\label{Gc-embed}
Let $G= G(\c)$ be a Gc-group.
Let $\{x_1,\ldots,x_s\}$ be a basis for $\Q^s$ over $\Q$ (the rationals under addition), and let 
$\Z = \left\langle y \right\rangle$.  Let $Q = \Q^s\rtimes \Z$,  with the action of $y$ on $\Q^s$ 
being given by the (columns of the) matrix
\[A(\c) = \left( \begin{array}{cccc} 
0&\ldots&0&-c_0/c_s\\
 & & & -c_1/c_s\\
 & & & .\\
 &I_{s-1}& & .\\
 & & & .\\
 & & & -c_{s-1}/c_s 
\end{array}\right). \]
Then $G$ is isomorphic to the subgroup $\left\langle x_1,y\right\rangle$ of $Q$.
\end{proposition}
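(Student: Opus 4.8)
The plan is to write down the obvious candidate isomorphism and verify it is well defined and injective by exploiting the module structure of the abelian normal subgroup $B$. Set $\Lambda = \Z[a, a^{-1}]$, the integral group ring of $\langle a\rangle$, and let $p(t) = c_0 + c_1 t + \cdots + c_s t^s$. The relations $[b, b^{a^i}]$ make $B$ abelian, so $B$ becomes a $\Lambda$-module with $a$ acting by conjugation (so $t\cdot b_i = b_{i+1}$), and the remaining relation $b_0^{c_0}b_1^{c_1}\cdots b_s^{c_s}$ together with its conjugates by powers of $a$ says exactly that $p(t)\cdot b_0 = 0$. Since $b_0$ generates $B$ as a $\Lambda$-module, this identifies $B\cong \Lambda/(p(t))$. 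Because $G/B\cong\langle a\rangle$ is infinite cyclic, and hence free, the extension splits and $G = B\rtimes\langle a\rangle$ with $\langle a\rangle\cap B = 1$.

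Next I define $\phi\colon G\to Q$ on generators by $a\mapsto y$ and $b\mapsto x_1$, viewing $\Q^s$ as a $\Lambda$-module on which $a$ acts by $A(\c)$; concretely I read the semidirect product so that conjugation by $a$ corresponds to $A(\c)$, i.e.\ $\phi(b^a) = A(\c)\,x_1$, which is the convention making the defining relation respected. To check $\phi$ respects the relations, note first that the commutators $[b, b^{a^i}]$ map into the abelian subgroup $\Q^s$ and so are killed automatically. For the defining relation, observe that $A(\c)$ is the companion matrix of $p(t)/c_s$: indeed $A^i x_1 = x_{i+1}$ for $0\le i\le s-1$ and $A^s x_1 = -\sum_{i=0}^{s-1}(c_i/c_s)x_{i+1}$, so $p(A)x_1 = 0$ (in fact $p(A)=0$, by Cayley--Hamilton). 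Hence the image $x_1^{c_0}(x_1^y)^{c_1}\cdots(x_1^{y^s})^{c_s}$ of the relation corresponds to $p(A)x_1 = 0$, and $\phi$ is a well-defined homomorphism with image $\langle y, x_1\rangle$.

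The heart of the argument is injectivity, and it splits along the semidirect-product structure. The composite $G\xrightarrow{\phi}Q\twoheadrightarrow Q/\Q^s\cong\langle y\rangle$ sends $a$ to a generator and kills $B$, so it is the projection $G\to G/B\cong\langle a\rangle$ followed by an isomorphism; consequently $\ker\phi\subseteq B$. It therefore suffices to show $\phi|_B$ is injective. Now $\phi|_B$ is the $\Lambda$-module homomorphism $\Lambda/(p(t))\to\Q^s$ sending $1\mapsto x_1$, so it is injective precisely when $\mathrm{Ann}_\Lambda(x_1) = (p(t))$.

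The main obstacle is exactly this annihilator computation, and it is where the hypotheses $c_0\neq 0$ and $\gcd(c_0,\ldots,c_s) = 1$ enter. Since $\{x_1, Ax_1,\ldots, A^{s-1}x_1\} = \{x_1,\ldots,x_s\}$ is a basis of $\Q^s$, the vector $x_1$ is cyclic for $A$, so over $\Q$ its annihilator is generated by the minimal polynomial of $A$, namely $p(t)/c_s$; thus $\mathrm{Ann}_{\Q[t]}(x_1) = (p(t))$. To transfer this to $\Lambda$, take $q\in\Lambda$ with $q(A)x_1 = 0$; clearing denominators with a power of $t$ (legitimate since $c_0\neq 0$ makes $A$, and hence $t\in\Lambda$, invertible) yields $\tilde q\in\Z[t]$ divisible by $p(t)$ in $\Q[t]$. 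Because $\gcd(c_0,\ldots,c_s) = 1$, the polynomial $p(t)$ is primitive, so Gauss's lemma upgrades this to divisibility in $\Z[t]$, whence $q\in(p(t))$ in $\Lambda$. Hence $\mathrm{Ann}_\Lambda(x_1) = (p(t))$, so $\phi|_B$ is injective, and together with $\ker\phi\subseteq B$ this shows $\phi$ is an isomorphism of $G$ onto $\langle x_1, y\rangle$.
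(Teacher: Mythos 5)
The paper does not actually prove this proposition itself --- it cites \cite[Proposition~2.4]{BH} --- so there is no in-paper argument to compare against; judged on its own, your proof is correct and essentially self-contained. Your route is the natural module-theoretic one: identify $B$ with the cyclic $\Lambda$-module $\Lambda/(p(t))$ for $\Lambda=\Z[t,t^{-1}]$ and $p(t)=c_0+c_1t+\cdots+c_st^s$, recognise $A(\c)$ as the companion matrix of $p(t)/c_s$ so that $x_1$ is a cyclic vector, reduce injectivity on $B$ to the computation $\mathrm{Ann}_\Lambda(x_1)=(p(t))$, and get that from the $\Q[t]$-annihilator together with Gauss's lemma --- which is exactly where the hypotheses $\gcd(c_0,\ldots,c_s)=1$ (primitivity of $p$) and $c_0,c_s\neq 0$ (so that $A(\c)$ exists and is invertible, making $t$ act as a unit) enter. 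All of these steps check out, including the handling of $\ker\phi\subseteq B$ via the quotient $Q\to Q/\Q^s$. The one place where you assert more than you prove is the claim that $B$ is \emph{exactly} $\Lambda/(p(t))$ rather than merely a quotient of it; strictly this needs the Reidemeister--Schreier fact that the normal closure of the commutators $[b^{a^j},b^{a^{i+j}}]$ in the free group on the $b^{a^i}$ is the full derived subgroup. But your argument does not in fact depend on it: von Dyck gives a surjection $\Lambda/(p(t))\twoheadrightarrow B$ whose composite with $\phi|_B$ is the map $1\mapsto x_1$, which you prove injective, so both maps are forced to be injective and the conclusion follows either way.
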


Next, we give a lemma about powers of the matrix $A(\c)$ defined in the previous proposition.

\newpage
Let $p$ be a prime.  
The \emph{$p$-adic valuation} $v_p: \Q\rightarrow \Z\cup \{\infty\}$ is given by
\begin{itemize}
\item $v_p(0) = \infty$;
\item $v_p(m/n) = d_m - d_n$ for $m,n\in \Z, \; n\neq 0$, 
where $d_k := \max \{ i\in \N_0 \mid p^i | k\}$ for all $k\in \Z$. 
\end{itemize}
We shall be concerned with powers of a prime occuring in the denominator
of various rational numbers.  Therefore, rather than $v_p$, we shall always be using 
$-v_p$, which, because of the frequency of its occurence, we shall denote by $\vp$.
Note that if $\vp (a) < \vp (b)$, then $\vp (a+b) = \vp (b)$.

The lemma is stated in slightly more generality than we require, 
as it is just as easy to prove the more general result.

\begin{lemma}\label{matrix}  
Let $M$ be a matrix of the form
\[ \left(\begin{array}{cc} 0\ldots 0 & a_1\\   & a_2\\ I_{s-1} & . \\   & .\\  & a_s\\ \end{array}\right), \]
where all $a_i\in \Q$ and at least one $a_i\notin \Z$.     
Write $M^k = (m_{ij}^{(k)})$ for $k\in \N$.  Then there exist $N\in \{1,\ldots,s\}$ 
and a prime $p$ such that, for every $k\in \N$, there exists some $i_k\leq ks$ with
$\vp (m_{Ns}^{(i_k)}) \geq k$.
\end{lemma}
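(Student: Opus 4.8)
The plan is to exploit the companion‑matrix structure of $M$. Writing $\v_k = M^k e_s$ for the last column of $M^k$ (so that $m_{Ns}^{(k)} = \v_k(N)$), the shape of $M$ gives, with the convention $\v_k(0)=0$, the elementary coordinate recursion
\[ \v_{k+1}(j) = \v_k(j-1) + a_j\,\v_k(s) \qquad (1\le j\le s). \]
(Each coordinate sequence also satisfies the order‑$s$ linear recurrence with coefficients $a_s,\dots,a_1$ by Cayley--Hamilton, but the displayed recursion is what I would use.) Thus each application of $M$ does two things: it shifts every coordinate up by one position, and it adds the ``broadcast'' vector $\v_k(s)\cdot(a_1,\dots,a_s)$. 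The whole problem therefore becomes one of tracking the $p$‑adic valuations $\vp$ of the coordinates of $\v_k$ and locating the times at which coordinate $N$ has large valuation.

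First I would fix the data: since some $a_i\notin\Z$, choose a prime $p$ with $R:=\max_i \vp(a_i)\ge 1$, and let $N$ be the \emph{largest} index attaining this maximum, so that $\vp(a_i)\le R$ for all $i$ and $\vp(a_i)<R$ whenever $i>N$. Now start from $\v_1=(a_1,\dots,a_s)$, where coordinate $N$ already has $\vp(\v_1(N))=\vp(a_N)=R$. The idea is to follow this large value as a ``travelling peak'': it is carried upward by the shift from coordinate $N$ to coordinate $N+1$, and so on up to coordinate $s$, taking $s-N$ steps, and is then fed back into coordinate $N$ by the broadcast term $a_N\,\v(s)$, which multiplies it by $a_N$ and hence raises its valuation by a further $R$. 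One full cycle has length $s-N+1\le s$ and raises the valuation of the distinguished coordinate by $R$. Iterating, with $i_k := 1+(k-1)(s-N+1)$, this should yield $\vp\bigl(m_{Ns}^{(i_k)}\bigr)\ge kR\ge k$ together with $i_k\le 1+(k-1)s\le ks$, which is exactly what is required.

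To convert this mechanism into valid inequalities I would prove, by induction on $k$, an invariant describing the valuations of \emph{all} $s$ coordinates of $\v_k$: that one peak coordinate carries the strictly largest valuation while every other coordinate is smaller by a definite margin. At a shift step the rival broadcast term in position $j>N$ carries a factor $a_j$ with $\vp(a_j)<R$, and at the feedback step the rival term $\v(N-1)$ is an older, strictly smaller value; in each case the rule quoted in the excerpt, namely $\vp(a)<\vp(b)\Rightarrow\vp(a+b)=\vp(b)$, guarantees that no cancellation occurs, so the peak survives and its valuation never drops. The real work — and the main obstacle — is precisely to formulate the invariant so that this strict domination holds at every step of every cycle; it is the choice of $N$ as the \emph{largest} index of maximal valuation that forces the interfering terms fed in from coordinates above $N$ to have strictly smaller valuation, so that the peak genuinely returns to coordinate $N$ (augmented by $R$) rather than being overtaken or annihilated. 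Once the invariant is established the index bookkeeping $i_k\le ks$ is routine, so I expect the entire difficulty to be concentrated in controlling these potential cancellations.
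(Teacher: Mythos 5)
Your setup is essentially the paper's: the same choice of prime $p$ and the same choice of $N$ as the \emph{largest} index of maximal valuation, and the same underlying mechanism (the term involving $a_N$ strictly dominates, so no cancellation can occur). But the proof has a genuine gap: the inductive invariant that you yourself identify as ``the real work'' is never formulated or established, and the version you gesture at is not correct as stated. Already at $k=1$ the ``peak coordinate carries the strictly largest valuation'' claim fails whenever some $a_i$ with $i<N$ also attains the maximal valuation $R$ (which the hypotheses allow), and it continues to fail at later times (e.g.\ for $s=3$, $a_1=a_2=1/p$, $a_3=1$ one finds several coordinates of $M^2e_s$ tied at valuation $1$). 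More seriously, your argument that the broadcast term $a_{j}\,\v_k(s)$ cannot overtake the peak during a shift step uses only $\vp(a_j)<R$ for $j>N$; this is not enough --- you also need an upper bound on $\vp(\v_k(s))$ at that moment relative to the peak's valuation, and coordinate $s$ is fed by \emph{all} coordinates, so controlling it forces you to control the whole vector's valuation profile through the entire history. Nothing in the proposal supplies that control, and the explicit timetable $i_k=1+(k-1)(s-N+1)$ with $\vp\geq kR$ is a strictly stronger claim than the lemma, asserted rather than proved.

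It is worth seeing how the paper's proof closes exactly this hole. Instead of tracking the full last column, it tracks the single scalar sequence $m_k=m_{Ns}^{(k)}$, which by the column-shift structure of $M$ satisfies the order-$s$ recurrence $m_{j+1}=\sum_{i=1}^{s}m_{j-s+i}a_i$, and it defines $i_k$ as the \emph{minimal} index with $\vp(m_{i_k})\geq k$. Minimality gives $\vp(m_i)<k$ for all $i<i_k$ for free, which bounds the rival terms with index below $N$; and the terms with index above $N$ are handled by the observation that one may assume $\vp(m_i)\leq k$ for all $i\leq j_k$, \emph{since otherwise $i_{k+1}\leq j_k$ and the induction is already finished}. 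That ``escape early'' step is precisely what replaces the global invariant you would otherwise have to maintain, and it is the missing idea in your proposal. Without it (or a correctly formulated and proved substitute invariant on all $s$ coordinates), the argument does not go through.
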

\begin{proof}

Choose some $a_j\notin \Z$, and let $p$ be a prime such that $\vp (a_j) > 0$.  
Let $n = \max \{\vp (a_i) \mid 1\leq i\leq s\}$ and let $N = \max\{i \mid \vp(a_i)=n\}$.  
For $k\in \N$, denote the entry in the $N$-th row and $s$-th column of $M^k$ by $m_k$.

Note that for $k\geq 2$ and $1\leq i\leq s-1$, the $i$-th column of $M^k$ is the same as 
the $(i+1)$-th column of $M^{k-1}$.  Thus the $N$-th row of $M^k$ is 
$(\epsilon_1,\ldots,\epsilon_{s-k},m_1,\ldots,m_k)$ if $k<s$, with $\epsilon_i\in \{0,1\}$; 
and $(m_{k-s+1},\ldots,m_{k-1},m_k)$ if $k\geq s$.  For convenience, rename 
$\epsilon_1,\ldots,\epsilon_{s-k}$ as $m_{k-s+1},\ldots,m_{0}$, so that we can write the 
$N$-th row of $M^k$ in the second form in both cases.  Notice that $m_k$ is in the $N,s-i$ 
position in $M^{k+i}$.  In particular, we have $m_k$ in the $N,N$ position 
of $M^{k+s-N}$ for all $k\in \N$.

For $k\in \N$, define $i_k$ to be the minimal natural number such that $\vp (m_{i_k})\geq k$
if such a number exists, or $\infty$ otherwise.
To begin with, we have $i_1 = 1$, since $\vp(m_1) = \vp(a_N) = n\geq 1$. 
We shall show by induction on $k$ that $i_k\leq ks$ for all $k\in \N$, hence proving the lemma.

Fix $k\in \N$ and suppose that $i_k\leq ks$.  Let $j_k = i_k + s - N$ and consider $M^{j_k}$.  
The $N$-th row of this matrix is $(m_{j_k-s+1},\ldots,m_{i_k},\ldots,m_{j_k-1},m_{j_k})$. 
Note that $\vp (m_{i_k})\geq k$ and $\vp (m_i) < k$ for $1\leq i<i_k$, by the minimality of $i_k$. 
For $i\leq 0$, we have $m_i\in \{0,1\}$ and so $\vp(m_i)\in \{0,-\infty\}$.  
Thus $\vp (m_{i}) < k$ for all $i<i_k$.
Note also that $j_k + 1 = i_k + s-N +1\leq ks+s = (k+1)s$.

We may assume that $\vp (m_i)\leq k$ for all $i\leq j_k$,
since otherwise we would have $i_{k+1}\leq j_k < (k+1)s$ and we would be done.  Now 
\begin{align*}
m_{j_k+1} &= (m_{j_k-s+1},\ldots,m_{i_k},\ldots,m_{j_k})\cdot(a_1,\ldots,a_N,\ldots,a_s)\\
&= \sum_{i=1}^s m_{j_k-s+i}a_i
= \sum_{i=1}^s m_{i_k-N+i}a_i. \hspace{2cm} (*)
\end{align*}
We have $\vp (m_{i_k-N+i}a_i) = \vp (m_{i_k-N+i})+\vp(a_i)$ for $1\leq i\leq s$.  In particular, 
$\vp (m_{i_k}a_N) = \vp (m_{i_k})+n\geq k+n$. 

By the maximality of $N$, we have $\vp(a_i)<n$ for all $i>N$.  
Since also $\vp(m_{i_k-N+i})<k$ for $i<N$, we thus have $\vp(m_{i_k-N+i}a_i) < k+n$ for $i\neq N$.  
So the $N$-th term of $(*)$ has strictly
greater negative $p$-adic value than the other terms and hence
\[\vp (m_{j_k+1}) = \vp (m_{i_k}a_N)\geq k+n\geq k+1,\] therefore 
$i_{k+1}\leq j_k+1 \leq (k+1)s$, 
as required.
\end{proof}

We are now ready to prove the main result of this section.

\begin{proposition}\label{Gc}
A Gc-group is poly-$\cal{CF}$ or co$\CF$ if and only if it is virtually abelian.
\end{proposition}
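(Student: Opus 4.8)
The plan is to prove the two directions separately, the substantial one being that a proper Gc-group is neither poly-$\CF$ nor co$\CF$.

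For the `if' direction, suppose $G = G(\c)$ is virtually abelian. Then $G$ has a finite-index subgroup isomorphic to $\Z^n$, which is a direct product of $n$ free groups and hence $n$-$\CF$ by Observation~\ref{polyprod}; it is also co$\CF$, being a finite direct product of copies of the co$\CF$ group $\Z$ (using \cite[Proposition~6]{HRRT}). Since both the poly-$\CF$ and the co$\CF$ groups pass to finite-index overgroups (Proposition~\ref{closureprops}), $G$ is both poly-$\CF$ and co$\CF$. For the `only if' direction I would argue the contrapositive: assuming $G$ is not virtually abelian, I will show $W(G)$ is neither poly-$\CF$ nor co$\CF$, splitting into two cases by the leading coefficients. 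If $|c_0| = |c_s| = 1$, then $G$ is polycyclic by Lemma~\ref{Gcpoly}, and, not being virtually abelian, the proof of Proposition~\ref{polynil} produces a regular $R$ with $\Phi(W(G)\cap R)$ satisfying Proposition~\ref{prop}, so Corollary~\ref{parikh} gives the claim. Otherwise at least one of $|c_0|,|c_s|$ exceeds $1$, and by Lemma~\ref{reverse} we may assume $|c_s|\geq 2$. Since $\gcd(c_0,\ldots,c_s)=1$, the integer $c_s$ cannot divide every $c_i$ with $i<s$, so some entry $-c_i/c_s$ of the matrix $A(\c)$ of Proposition~\ref{Gc-embed} is non-integral and Lemma~\ref{matrix} applies.

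The core of the argument is the construction in this second case. Working inside the embedding $G\hookrightarrow \Q^s\rtimes \Z$ of Proposition~\ref{Gc-embed}, the generator $b$ corresponds to $x_1$ and each conjugate $b_i = b^{a^i}$ to a vector $A(\c)^{\pm i}x_1$; as $A(\c)$ is of companion type, $b_0,\ldots,b_{s-1}$ form a $\Q$-basis of $\Q^s$. Up to inverting $A(\c)$ (the argument is symmetric in the sign of $i$), positive powers of $A(\c)$ applied to $x_1$ are conjugates $b_j$, and Lemma~\ref{matrix} supplies a prime $p$, an index $N$, and for each $k$ a position $j_k\leq ks+s-1$ with $\vp\bigl(m_{Ns}^{(i_k)}\bigr)\geq k$, where $m_{Ns}^{(i_k)}$ is the $N$-th coordinate of $b_{j_k}$ (reading the $s$-th column of $A(\c)^{i_k}$ as $b_{i_k+s-1}$). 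Writing $b_{j_k} = \sum_{i=0}^{s-1}\lambda_i b_i$ with $\lambda_i\in\Q$, the word problem of $G$ forces $b_{j_k}^{\,e}\prod_{i=0}^{s-1} b_i^{-e\lambda_i}$ to equal the identity exactly when every $e\lambda_i\in\Z$, and the coordinate with $\vp\geq k$ forces $e$ to be a multiple of some $V_k$ with $p^k\mid V_k$. I would encode these relators — moving any $b_i$ with $\lambda_i<0$ to the other side so all exponents are non-negative, exactly as both $b$ and its inverse were used in Proposition~\ref{wreathp} — as a single bounded regular language $R$ in which the position $j$ and the exponents $e,f_0,\ldots,f_{s-1}$ vary freely, and set $L = W(G)\cap R$. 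After a permutation $\tau$ placing the (polynomially bounded) $a$-exponent coordinates first and the $b$-exponent coordinates last, I claim $\tau(\Phi(L))$ satisfies Proposition~\ref{prop} with $f(k)=k$: for each $k$ choose the level $m$ large enough that $p^m$ exceeds $k\,\sigma(\a)$ (possible as $\sigma(\a)=O(ms)$ is polynomial while $p^m$ is exponential), giving (ii); condition (i) holds by taking $e=V_m$; and condition (iii) holds since for fixed positions the admissible $\b$ are determined by $e\in V_m\N$, so distinct solutions differ by at least $V_m\geq p^m\geq k$ in the $e$-coordinate. Corollary~\ref{parikh} then shows $L$ is neither poly-$\CF$ nor co$\CF$, and since $L=W(G)\cap R$, closure of both classes under intersection with regular languages forces $W(G)$ to be neither.

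The main obstacle will be the bookkeeping of this last construction: producing one bounded language that simultaneously captures the relators for all $k$, correctly handling the signs of the $\lambda_i$ so that the Parikh image lies in the correct $\N_0^{r+s}$, and confirming that the $a$-exponent data stays polynomially bounded while the forced $b$-exponent grows like $p^k$. Once these are in place, the verification of (i)--(iii) is routine, and together with the polycyclic case and the easy `if' direction this completes the proof.
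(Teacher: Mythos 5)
Your proposal follows essentially the same route as the paper's proof: the same reduction to $|c_s|\neq 1$ via the polycyclic case (Lemma~\ref{Gcpoly} plus Proposition~\ref{polynil}) and Lemma~\ref{reverse}, the same use of the embedding of Proposition~\ref{Gc-embed} together with Lemma~\ref{matrix} to force a $p$-power divisibility on the exponent of $x_s$ after conjugation by $y^k$, and the same application of Proposition~\ref{prop} via Corollary~\ref{parikh} to a bounded sublanguage of $W(G)$. The minor variations (a regular rather than context-free bounded language, handling the signs of the coefficients by moving terms across rather than by passing to a sign-consistent subsequence, and taking $f(k)=k$ rather than $f(t)=p^t$) do not change the substance of the argument.
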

\begin{proof}
Let $G = G(\c)$ be a proper Gc-group with $\c\in \Z^{s+1}$.
If $|c_0| = |c_s| = 1$, then $G$ is polycyclic and hence not poly-$\cal{CF}$
by Proposition~\ref{polynil}. 
Hence if $|c_s| = 1$, we may assume $|c_0|\neq 1$.  By Lemma~\ref{reverse}, $G$ is 
isomorphic to $G(\c')$, where $\c' = (c_s,c_{s-1},\ldots,c_0)$.  
Thus we may assume that $|c_s|\neq 1$.

By Lemma~\ref{Gc-embed}, we can identify $G$ with the subgroup $\left\langle x_1,y\right\rangle$ 
of $Q = \Q^s\rtimes \Z$, where $\{x_1,\ldots,x_s\}$ is a basis for $\Q^s$ over $\Q$,
$\Z = \left\langle y\right\rangle$, and $y$ acts on $\Q^s$ by the matrix $A(\c)$ given in the lemma.
Let $M = A(\c)$ and use the notation of Lemma~\ref{matrix} for entries of $M^k$.  
Since $|c_s|\neq 1$ and $\gcd(c_0,\ldots,c_s) = 1$, some $c_i/c_s$ for 
$0\leq i\leq s-1$ is not an integer.  Thus $M$ satisfies the hypothesis of 
Lemma~\ref{matrix}.  Hence there exist $I\in \{1,\ldots,s\}$ 
and a prime $p$ such that, for every $k\in \N$, there exists some $\iota_k\leq ks$ 
such that $\vp (m_{Is}^{(\iota_k)})$ is at least $k$.

For $k\in \N$, let 
\[ \ell_k = \min\left\{ \ell\in \N \mid \ell m_{is}^{(k)}\in \Z \;(1\leq i\leq s)\right\}.\]
This is the smallest nonnegative integer $\ell$ such that the final column of $\ell M^k$ has 
all integer entries.  We are especially interested in the matrices $M^{\iota_k}$,
and so it will be convenient to set $\lambda_k = \ell_{\iota_k}$.
Since $\vp (m_{Is}^{(\iota_k)})\geq k$, we have $\lambda_k\geq p^k$ for all $k\in \N$.

We can take an increasing sequence of natural numbers $n_1, n_2, \ldots$ such that,
for all $i\in \{1,\ldots,s\}$, the entries $m_{is}^{(\iota_{n_k})}$ 
are either nonnegative for all $k\in \N$, or negative for all $k\in \N$.
In the first case we say that $i$ is of Type 1, while in the 
second case $i$ is of Type 2.

We are now ready to define a bounded sublanguage of $W(G)$ which we can show 
to be not poly-$\cal{CF}$ using Corollary~\ref{parikh}.
Let $X = \{x_1,\ldots,x_s,y\}$ and consider the intersection of $W(G,X)$ with the
bounded context-free language
\[ L' = \cup_{k\in \N_0} (y^{-1})^k x_s^* y^k 
(x_1^{\epsilon_1})^* (x_2^{\epsilon_2})^* \ldots (x_s^{\epsilon_s})^*,\]
where $\epsilon_i = (-1)^j$ if $i$ is of Type $j$.
Let $L = \Phi \left(W(G,X)\cap L'\right)$.

The final column of $M^k$ represents the action of $y^k$ on $x_s$.  Specifically,
\[ x_s^{y^k} = x_1^{m_{1s}^{(k)}}\cdots x_I^{m_{Is}^{(k)}}\cdots x_s^{m_{ss}^{(k)}}.\]
For $\lambda\in \Z$ and $k\in \N$, the element $\left((x_s^\lambda)^{y^k}\right)^{-1}$ of $G$ can 
be expressed as a word in $(x_1^{\epsilon_1})^* (x_2^{\epsilon_2})^* \ldots (x_s^{\epsilon_s})^*$
if and only if $\ell_k | \lambda$.  
For all $k\in \N$, we thus have $(\iota_k,\lambda,\iota_k;\v)\in L$, where $\v\in \N_0^s$, 
if and only if $\ell_{\iota_k} = \lambda_k | \lambda$ and $\v(i) = \lambda |m_{is}^{(\iota_k)}|$  
for $1\leq i\leq s$.

Let $\tau$ be the permutation $(2,3)$.  Then for all $k\in \N$, 
we have $(\iota_k,\iota_k;\v)\in \tau(L)$, where $\v\in \N_0^{s+1}$, if and only if 
$\lambda_k | \v(1)$ and
$\v(i+1) = \v(1) |m_{is}^{(\iota_k)}|$ for $1\leq i\leq s$.  

For $k\in \N$, let $\a_k = ((\iota_{n_k},\iota_{n_k})$ and
let $\b_k\in \N_0^{s+1}$ with $\b_k(1) = \lambda_{n_k}$ and 
$\b_k(i+1) = \lambda_{n_k} |m_{is}^{(\iota_{n_k})}|$ for $1\leq i\leq s$.  
So $(\a_k;\b)\in \tau(L)$ if and only if $\b$
is a nonnegative integer multiple of $\b_k$.

For any $t\in \N$, there exists $N\in \N$ such that, for all $k\geq N$,
\[t\sigma(\a_k) = 2t\iota_{n_k}\leq 2tsn_k < p^{n_k}\leq \lambda_{n_k} = \b_k(1).\]
Thus, for any $k\geq N$, $\a_k$ satisfies the first two conditions of 
Proposition~\ref{prop} with respect to $t$.
We can take $k$ such that $n_k\geq t$.
For any two distinct $\b$ and $\b'$ such that $(\a;\b), (\a;\b')\in \tau(L)$, 
there are distinct $\lambda_1,\lambda_2\in \N_0$ such that
\begin{align*}
|\b(1) - \b'(1)| &= |\lambda_1\b_k(1) - \lambda_2\b_k(1)|\\
&= |\lambda_1 - \lambda_2|\lambda_{n_k}\geq p^{n_k}\geq p^t. 
\end{align*}
Since $f(t) = p^t$ is an unbounded function, this shows that $\a_k$ also satisfies 
the third condition of Proposition~\ref{prop} with respect to $t$.
Thus $\tau(L)$ is not a semilinear set and so $W(G,X)\cap L'$ is 
neither poly-$\cal{CF}$ nor co$\CF$, by Corollary~\ref{parikh}.  
Since $L'$ is context-free, this implies that $W(G,X)$ is neither poly-$\cal{CF}$ nor co$\CF$.
\end{proof}

\section{Soluble poly-$\CF$ groups}

In the case of soluble groups, Conjecture~\ref{polyconj} simplifies to

\begin{conjecture}\label{solconj}
A finitely generated soluble group is poly-$\cal{CF}$ if and only if it is virtually abelian.
\end{conjecture}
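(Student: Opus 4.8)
The plan is to reduce both directions to finitely generated subgroups of a controlled shape, using that the poly-$\CF$ groups are closed under taking finitely generated subgroups and finite index overgroups (Proposition~\ref{closureprops}). The \emph{if} direction is immediate: a virtually abelian group has a finite index subgroup of the form $\Z^n\times F$ with $F$ finite, which is poly-$\CF$ by Lemma~\ref{rankk} and Observation~\ref{polyprod}, and hence so is any finite index overgroup of it.

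For the \emph{only if} direction, let $G$ be finitely generated soluble and not virtually abelian, and suppose for contradiction that $G$ is poly-$\CF$; then every finitely generated subgroup of $G$ is poly-$\CF$ too. By Theorem~\ref{solsubs}, $G$ contains a subgroup of type (i), (ii) or (iii). A type (i) subgroup $\Z^\infty$ contains $\Z^k$ for every $k$, so $G$ cannot be $m$-$\CF$ for any $m$ by Lemma~\ref{rankk}; a type (ii) subgroup is a proper Gc-group, which is not poly-$\CF$ by Proposition~\ref{Gc}. Both give contradictions, leaving only type (iii): a finitely generated $H$ with an infinite normal torsion subgroup $U$ such that $H/U$ is free abelian or a proper Gc-group.

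Two special cases then follow at once. If $G$ is metabelian, the final clause of Theorem~\ref{solsubs} lets us take $H = C_p\wr\Z$ in case (iii), which is not poly-$\CF$ by Proposition~\ref{wreathp}; and if $G$ is torsion-free, case (iii) is vacuous, since $U$ would be an infinite torsion subgroup of $G$. The main obstacle is the general form of case (iii), where $U$ may be non-abelian, the action of $H/U$ on $U$ may be intricate, and $H/U$ may be a proper Gc-group rather than $\Z$. My strategy here would be to imitate Proposition~\ref{wreathp}: locate a prime $p$ and an infinite family of order-$p$ elements of $U$ that are freely permuted by a cyclic section of $H/U$, giving a combinatorial copy of $C_p\wr\Z$, and then build a bounded sublanguage of $W(H)$ whose suitably permuted Parikh image contains a $k$-dimensional subset of $S^{(n,k)}$ for every $k$, finishing via Corollary~\ref{kS(n,k)} and Corollary~\ref{parikh}. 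Guaranteeing such a free shift orbit---controlling the primary decomposition of $U$ and the induced action in full generality---is the step I expect to be hardest, and is presumably where the conjecture remains open beyond the metabelian case.
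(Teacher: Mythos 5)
The statement you have been asked to prove is stated in the paper only as Conjecture~\ref{solconj}; the paper does not claim a proof of it, and what it actually establishes is the partial result Theorem~\ref{polycf-sol}. Your argument coincides with the paper's proof of that theorem: the \emph{if} direction via Lemma~\ref{rankk}, Observation~\ref{polyprod} and closure under finite index overgroups; the \emph{only if} direction by invoking Theorem~\ref{solsubs} and eliminating type (i) subgroups by Lemma~\ref{rankk}, type (ii) by Proposition~\ref{Gc}, the metabelian instance of type (iii) via $C_p\wr\Z$ and Proposition~\ref{wreathp}, and the torsion-free instance by vacuity. You correctly identify the general type (iii) case as the genuine gap, and this is precisely where the paper also stops: it offers only the single worked example of Proposition~\ref{abc}, which follows exactly the strategy you sketch (building a bounded sublanguage whose Parikh image is a $k$-dimensional subset of $S^{(4,k)}$ and applying Corollary~\ref{kS(n,k)} and Corollary~\ref{polyParikh}). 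So your proposal is as complete as the paper's own treatment and no more; the conjecture itself remains open, and you should present your argument as a proof of Theorem~\ref{polycf-sol} rather than of Conjecture~\ref{solconj}.
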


Using Theorem~\ref{solsubs} and the fact that the class of poly-$\CF$ groups is closed under taking 
finitely generated subgroups (Proposition~\ref{closureprops}), we can make some progress towards 
resolving Conjecture~\ref{solconj}.

\begin{theorem}\label{polycf-sol}
If $G$ is a finitely generated poly-$\cal{CF}$ soluble group, then one of the following must hold:
\begin{enumerate} 
\item $G$ is virtually abelian; or (possibly)
\item $G$ has a finitely generated subgroup $H$ with an infinite normal torsion subgroup $U$
such that $H/U$ is either free abelian or isomorphic to a proper
Gc-group. 
\end{enumerate} 
The second case does not occur if $G$ is metabelian or torsion-free.
\end{theorem}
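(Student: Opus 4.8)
The plan is to combine Theorem~\ref{solsubs} with the two facts that the class of poly-$\CF$ groups is closed under taking finitely generated subgroups (Proposition~\ref{closureprops}) and that the specific groups appearing in Theorem~\ref{solsubs} have already been shown not to be poly-$\CF$. First I would dispose of the trivial case: if $G$ is virtually abelian, conclusion~(i) holds and we are done. So I would assume $G$ is not virtually abelian. Since $G$ is finitely generated and soluble, Theorem~\ref{solsubs} then produces a subgroup of $G$ isomorphic to one of its three listed types, and the strategy is to rule out the first two types outright, forcing the third, which is precisely conclusion~(ii).

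To rule out type~(i), I would observe that a copy of $\Z^\infty$ inside $G$ contains $\Z^k$ as a finitely generated subgroup for every $k$. If $G$ were $m$-$\CF$, then by Proposition~\ref{closureprops} every finitely generated subgroup of $G$ would be $m$-$\CF$, so in particular $\Z^{m+1}$ would be $m$-$\CF$; but Lemma~\ref{rankk} says $\Z^{m+1}$ is not $m$-$\CF$, a contradiction. To rule out type~(ii), I would use that a proper Gc-group is finitely generated (by $a$ and $b$) and, by Proposition~\ref{Gc}, not poly-$\CF$, so it cannot embed in the poly-$\CF$ group $G$. Hence only type~(iii) survives, and the group $H$ it furnishes is exactly a finitely generated group with an infinite normal torsion subgroup $U$ such that $H/U$ is free abelian or a proper Gc-group; this is conclusion~(ii).

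For the special cases, I would argue as follows. If $G$ is metabelian, Theorem~\ref{solsubs} allows the subgroup $H$ in type~(iii) to be taken to be $C_p\wr\Z$ for some prime $p$; but $C_p\wr\Z$ is not poly-$\CF$ by Proposition~\ref{wreathp}, so type~(iii) is also excluded, leaving only conclusion~(i). If $G$ is torsion-free, then every subgroup of $G$, and in particular the subgroup $U$ of type~(iii), is torsion-free, whereas $U$ is required to be an infinite torsion group; so type~(iii) again cannot occur. In both cases conclusion~(ii) becomes impossible, which is the asserted statement (and settles Conjecture~\ref{solconj} for these classes).

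Since the proof is essentially an assembly of the harder results established earlier, I do not expect a serious obstacle; the only point requiring care is to keep track of finite generation, so that the closure property of Proposition~\ref{closureprops} can legitimately be applied to each subgroup produced by Theorem~\ref{solsubs}. Concretely, for type~(i) this means passing to the finitely generated subgroups $\Z^k$ rather than working with $\Z^\infty$ directly, and for types~(ii) and~(iii) it means invoking the explicit finite generation guaranteed in the statement of Theorem~\ref{solsubs}.
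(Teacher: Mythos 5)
Your proposal is correct and follows essentially the same route as the paper: apply Theorem~\ref{solsubs}, eliminate the $\Z^\infty$ case via Lemma~\ref{rankk} and the proper Gc-group case via Proposition~\ref{Gc} using closure under finitely generated subgroups, and handle the metabelian and torsion-free refinements exactly as the paper does via Proposition~\ref{wreathp} and the definition of torsion-freeness respectively.
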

\begin{proof}
By Theorem~\ref{solsubs}, if $G$ is a finitely generated soluble group which does not satisfy
(i) or (ii), then $G$ has a subgroup isomorphic to $\Z^\infty$ or a proper Gc-group.

If $G$ has a $\Z^\infty$ subgroup, then $G$ has free abelian subgroups of rank $k$ for all $k\in \N$
and so is not poly-$\CF$ by Lemma~\ref{rankk}.  If $G$ contains a proper Gc-group, then $G$ is not 
poly-$\CF$ by Proposition~\ref{Gc}.

If $G$ is torsion-free, then by definition $G$ has no non-trivial torsion subgroups.
If $G$ is metabelian, then the subgroup $H$ in the second case can be taken to be $C_p\wr \Z$ for some prime $p$,
and hence $G$ not poly-$\CF$ by Proposition~\ref{wreathp}.
\end{proof}

We conjecture that the second case does not occur at all, but have been unable
to prove this so far.

In order to complete the proof of Conjecture~\ref{solconj}, we need only show 
that a finitely generated soluble group $G$ having an infinite torsion subgroup $U$
such that $G/U$ is either free abelian or isomorphic to a proper Gc-group is 
not poly-$\cal{CF}$.

One way of approaching this which looks promising would be to show that a
poly-$\cal{CF}$ group cannot have an infinite torsion subgroup.  We know that 
context-free groups cannot have infinite torsion subgroups, because they are 
virtually free.
Actually, we conjecture something stronger, which again is true in the 
case of context-free groups.
\begin{conjecture}\label{torsionconj}
If a group $G$ is poly-$\cal{CF}$, then $G$ does not have arbitrarily large
finite subgroups.
\end{conjecture}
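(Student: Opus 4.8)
The plan is to argue by contraposition: assuming $G$ is $k$-$\CF$ for a fixed $k$, I would try to bound the order of its finite subgroups in terms of $k$. The only machinery available for lower bounds of this kind is the correspondence of Corollary~\ref{polyParikh}, which says that the Parikh image of $W(G)$ intersected with any bounded set $w_1^*\ldots w_n^*$ is an intersection of $k$ stratified semilinear sets, together with the obstruction of Corollary~\ref{kS(n,k)}, which forbids a $k$-dimensional subset of $S^{(n,k)}$ from being expressed as such an intersection with fewer than $k$ factors. So the whole strategy reduces to manufacturing, from a large finite subgroup, a bounded sublanguage of $W(G)$ whose Parikh image (after a suitable coordinate permutation $\tau$, as in Corollary~\ref{parikh}) contains a $k$-dimensional copy of a set of type $S^{(m,k)}$, with $k$ allowed to grow as the subgroups do.

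First I would fix a finitely generated $G$ — legitimate, since the class of poly-$\CF$ groups is closed under finitely generated subgroups by Proposition~\ref{closureprops} — containing finite subgroups $F_k$ with $|F_k|\to\infty$. The model to imitate is Proposition~\ref{wreathp}: there a single torsion element $b$ and an infinite-order element $a$ produce, by conjugation, unboundedly many pairwise distinct torsion elements $b^{a^{m}}$ occupying independent ``slots'', and the finiteness of the order of $b$ forces the matching constraints $m_i=m_{k+i}$ that place the Parikh image inside $S^{(2,k)}$. The aim would be to locate inside a general $G$ the same two ingredients: an element $t$ of infinite order, and torsion elements spread by conjugation by $t$ into independent slots, so that the relations forced by the finite subgroups reproduce the $S^{(m,k)}$ matching structure and Corollary~\ref{kS(n,k)} applies.

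The hard part — and the reason this remains a conjecture — is exactly the extraction of this configuration. Large finite subgroups need not arise from conjugating torsion by an infinite-order element: intuitively, a finite subgroup confined to a bounded region of the group contributes only a regular, hence $1$-$\CF$, fragment to the word problem, so the obstruction must come from an infinite ``spreading direction'' along which the subgroups grow. For soluble $G$ this direction is supplied by the structure theorem (Theorem~\ref{solsubs}), which forces a copy of $C_p\wr\Z$ and thereby reduces everything to Proposition~\ref{wreathp}; but outside the soluble world there is no analogue of Theorem~\ref{solsubs}, and I know of no general mechanism guaranteeing that unboundedly large finite subgroups of an infinite finitely generated group must be laid out along a single $\Z$, or along any commuting family of independent slots.

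A realistic intermediate target would therefore be to prove the conjecture under a structural hypothesis that supplies the missing direction — for instance for groups containing an infinite-order element normalising an infinite torsion subgroup, where one can hope to carve out a $C_p\wr\Z$-like bounded sublanguage directly — thereby isolating the genuinely exotic cases. Even granting such a configuration, a routine but non-trivial step will remain: verifying that the constraints imposed by the ambient relations do not collapse the would-be $S^{(m,k)}$ image to lower dimension, so that the permuted Parikh image really does contain a $k$-dimensional subset and Corollary~\ref{kS(n,k)} bites. This is the analogue of the dimension bookkeeping carried out in the proof of Proposition~\ref{wreathp}, and I expect it to be the main technical, as opposed to conceptual, cost once a suitable configuration is in hand.
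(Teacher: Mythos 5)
There is nothing in the paper to compare your attempt against: the statement you were given is Conjecture~\ref{torsionconj}, which the paper leaves open. The author states explicitly that her own approaches to it ``from the perspective of automata theory, have not succeeded'' and speculates that grammars might be more fruitful. Your proposal is likewise not a proof, and to your credit you say so plainly; so the honest verdict is that neither text contains an argument, and the question is only whether your strategy sketch adds anything.

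It essentially reconstructs the one line of attack the paper's machinery supports: manufacture a bounded sublanguage of $W(G)$ whose (permuted) Parikh image contains a $k$-dimensional subset of some $S^{(m,k)}$, then invoke Corollary~\ref{kS(n,k)} and Corollary~\ref{polyParikh}, exactly as in Proposition~\ref{wreathp}. Your diagnosis of the obstruction is also the right one: that construction needs an infinite-order element spreading torsion into independent ``slots,'' and outside the soluble world there is no analogue of Theorem~\ref{solsubs} to force such a configuration --- large finite subgroups (say, of unbounded rank elementary abelian type, or nonabelian simple) need not interact with any infinite-order element at all. But be aware that your ``realistic intermediate target'' --- an infinite-order element normalising an infinite torsion subgroup --- is precisely case (ii) of Theorem~\ref{polycf-sol}, the case the paper itself cannot resolve: Proposition~\ref{abc} handles one very particular presentation, and the paper admits it does not know how to treat even quotients of that example. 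So your proposal, while free of false claims, does not advance past the point where the paper is already stuck; the conjecture remains open, and any genuine progress would have to supply either the missing structural mechanism or a fundamentally different (perhaps grammar-theoretic) obstruction.
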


So far, the author's approaches towards this conjecture, from the perspective 
of automata theory, have not succeeded.  It may be that an approach using grammars
would be more fruitful.  

\subsection{An example of the undetermined case}

We give a proof of non-poly-context-freeness in a specific example of
the second case of Theorem~\ref{polycf-sol}.

If $\left\langle X \mid R\right\rangle$ is a group presentation, we denote the abelianisation of the group
with this presentation by $\mathrm{Ab}\left\langle X \mid R\right\rangle$.
This enables us to write shorter presentations for abelian groups, 
by omitting the commutators of generators from the relator set.
We call such a presentation an \emph{abelian presentation}.

\begin{proposition}\label{abc}
Let $p$ be a prime and let $G$ be the group given by the following presentation.
\[ \begin{array}{ll}
\langle a, b_i \; (i\in \Z), c_j \; (j>0) \mid & b_i^a = b_{i+1} \; (i\in \Z), \;
[b_i, b_{i+j}] = c_j \; (i\in \Z, j>0),\\
& b_i^p = c_j^p = 1 \; (i\in \Z, j>0),\; c_j\; \mathrm{central}\; (j>0) \rangle.
\end{array} \]
Then $G$ has derived length $3$ and satisfies (ii) of Theorem~\ref{polycf-sol}, and is not poly-$\cal{CF}$.
\end{proposition}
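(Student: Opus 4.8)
The plan is to separate the two easy structural claims from the genuinely hard non-poly-$\CF$ assertion. First observe that $G=\langle a,b_0\rangle$, since $b_i=b_0^{a^i}$ and $c_j=[b_0,b_0^{a^j}]$; so $G$ is finitely generated. Let $B=\langle b_i\mid i\in\Z\rangle$; by the relation $b_0^{\pm1}b_1^{c_1}\cdots$ (here the defining relation $[b_i,b_{i+j}]=c_j$ puts every $c_j$ into $B$), $B$ is normal with $B'=\langle c_j\mid j>0\rangle$ central, and $G/B\cong\langle a\rangle\cong\Z$. Hence $G'\leq B$, $G''\leq[B,B]=\langle c_j\rangle\leq Z(G)$, so $G'''=1$; and $G''\neq 1$ because, using the alternating commutator form on $B/\langle c_j\rangle$, one computes $[b_0^{-1}b_1,\,b_0^{-1}b_2]=c_1^2c_2^{-1}\neq 1$ (both factors lie in $G'$, being $[b_0,a]$ and $[b_0,a^2]$). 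Thus $G$ has derived length exactly $3$.

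For property (ii) of Theorem~\ref{polycf-sol}, I would take $H=G$ and $U=\langle b_i,c_j\rangle$. There is a homomorphism $\phi\colon G\to\Z$ sending $a\mapsto1$ and every $b_i,c_j\mapsto 0$ (all relations are respected, since they are commutators, $p$-th powers, or centrality statements), and $U=\ker\phi$ is the normal closure of $b_0$. It is infinite (it contains the distinct $b_i$) and torsion: $U$ is nilpotent of class $2$, generated by elements of order $p$ whose commutators lie in the central subgroup $\langle c_j\rangle$ of exponent $p$, so $U$ has finite exponent. Since $G/U\cong\Z$ is free abelian, $G$ satisfies (ii).

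The real work is showing $G$ is not poly-$\CF$. The natural plan is to imitate Proposition~\ref{wreathp}: for each $k$, intersect $W(G)$ with a bounded language of \emph{bounded} context-free complexity (independent of $k$) built from a forward block of bumps $b_{\mu_i}$ and a backward block of inverse bumps $b_{\mu_i}^{-1}$, with pushdown-checkable ordering constraints, arranged so that $W(G)$ forces an \emph{aligned} (crossing) matching $\mu_{k+i}=\mu_i$. One then hopes $\Phi$ of the resulting language is a $k$-dimensional subset of some $S^{(n,k)}$, whence Corollary~\ref{kS(n,k)} (via Corollary~\ref{polyParikh}) gives that it is not $(k-1)$-$\CF$, and letting $k\to\infty$ forces $W(G)$ to be non-poly-$\CF$.

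The main obstacle — and the point at which $G$ departs sharply from $C_p\wr\Z$ — is that $B$ is nilpotent of class $2$ rather than abelian. An aligned matched word $b_{\mu_1}\cdots b_{\mu_k}b_{\mu_1}^{-1}\cdots b_{\mu_k}^{-1}$ does not equal the identity but rather the central correction $\prod_{i<j}c_{\mu_j-\mu_i}$, which vanishes only when every difference $\mu_j-\mu_i$ occurs with multiplicity $\equiv 0\pmod p$ — a thin, lower-dimensional locus of positions, killing the $k$-dimensionality needed for Corollary~\ref{kS(n,k)}. Conversely, the only cheap way to make matched words freely trivial is to reverse the backward block, but then the matching becomes \emph{nested} rather than crossing, and a nested matching yields a stratifiable Parikh set for which the proof of Proposition~\ref{S(n,k)} gives no lower bound. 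The crux is therefore to cancel the central correction while \emph{preserving} the crossing $S^{(n,k)}$-structure and the $k$ free dimensions, at a context-free cost independent of $k$: I expect this to require exploiting the relations $c_j=[b_0,b_j]$ to absorb the correction through a global symmetry (for instance the reflection $b_i\mapsto b_{-i}$, which sends each $c_j\mapsto c_j^{-1}$) or a telescoping of corrections, and then verifying carefully that the Parikh image of the resulting bounded sublanguage of $W(G)$ is still a $k$-dimensional subset of an appropriate $S^{(n,k)}$. Establishing that cancellation without inflating the number of intersected context-free constraints is where essentially all the difficulty lies.
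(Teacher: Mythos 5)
Your structural arguments are essentially sound, but the core of the proposition --- that $G$ is not poly-$\CF$ --- is not actually proved. You correctly identify the obstruction (the base group is nilpotent of class $2$, so a crossing-matched word in the $b_{\mu_i}$'s leaves a central correction $\prod_{i<j}c_{\mu_j-\mu_i}$ that vanishes only on a lower-dimensional locus), but you leave its resolution as speculation, and neither of your suggested fixes (the reflection $b_i\mapsto b_{-i}$, or a telescoping of corrections) is carried out. The paper's solution is different and simpler than what you sketch: rather than building the bounded language out of the $b_{\mu_i}$ themselves, it encodes each ``letter'' as a full commutator. Each block $b^{-1}a^{-m}b^{-1}a^{m}\,b\,a^{-m}b\,a^{m}$ spells $[b,b_m]=c_m$, and each block of the second type spells $[b,b_m^{-1}]=c_m^{-1}$, so a word in the bounded language $M_k$ evaluates directly to $c_{m_1}\cdots c_{m_k}c_{m_{k+1}}^{-1}\cdots c_{m_{2k}}^{-1}$ inside the central subgroup $\left\langle c_j\right\rangle$, which is elementary abelian with the $c_j$ independent. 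The matching argument of Proposition~\ref{wreathp} then applies verbatim: the increasing conditions force $m_i=m_{k+i}$, the set $\Phi\left(W(G)\cap M_k\right)$ is a $k$-dimensional subset of $S^{(4,k)}$ (four syllables per letter instead of two), $M_k$ is $3$-$\CF$ rather than $2$-$\CF$, and Corollary~\ref{kS(n,k)} with Corollary~\ref{polyParikh} finishes. No cancellation of central corrections is ever needed, because the word never leaves the abelian part of the group.

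There is also a secondary gap. Both your derived-length computation ($[b_0^{-1}b_1,b_0^{-1}b_2]=c_1^2c_2^{-1}\neq 1$) and the matching argument above require knowing that the $c_j$ are nontrivial and independent in $G$, i.e.\ that $\left\langle c_j \mid j>0\right\rangle$ really is $\mathrm{Ab}\left\langle c_j\;(j>0)\mid c_j^p\right\rangle$ and the presentation does not collapse. This is not automatic from a presentation, and you assert it without justification; the paper establishes it via the theory of covering groups (Darstellungsgruppen) applied to the finite quotients $E_n$ and a direct limit argument. (Note also that for $p=2$ your witness degenerates to $c_2^{-1}$, so even there you need $c_2\neq 1$.) Your verification of condition (ii) of Theorem~\ref{polycf-sol}, with $U$ the kernel of the map onto $\left\langle a\right\rangle\cong\Z$, matches the paper and is fine.
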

\begin{proof}
In this proof, we shall always assume that the indices on the right hand side of a 
presentation run over all available values (specified on the left hand side).
This prevents the presentations from becoming too cluttered.
With this convention, the presentation for $G$ is simplified to
\[ \left\langle b_i \; (i\in \Z), c_j \; (j>0) \mid b_i^a = b_{i+1},\;  [b_i, b_{i+j}] = c_j,
b_i^p = c_j^p = 1,\; c_j\; \mathrm{central}\right\rangle. \]  
Let $H$ be the group defined by the subpresentation
\[ \left\langle b_i \; (i\in \Z), c_j \; (j>0) \mid  [b_i, b_{i+j}] = c_j,
b_i^p = c_j^p = 1,\; c_j\; \mathrm{central}\right\rangle. \]  
Then $a$ acts on $H$ by conjugation as 
an automorphism of infinite order, so $G\cong H\rtimes \left\langle a\right\rangle$ and $G/H\cong \Z$.
Thus $G$ satisfies the second case of Theorem~\ref{polycf-sol}, with $U = H$.
Since $G\rhd H\rhd \left\langle c_j \; (j>0)\right\rangle\rhd \{1\}$ is a normal series for $G$ with abelian 
factors, $G$ has derived length at most $3$.

By standard results on `Darstellungsgruppen' (covering groups) in \cite[Chapter V.23]{Hup}, in the group $E_n$
given by the presentation
\[ \left\langle b_i \; (-n\leq i\leq n), c_{ij} \; (-n\leq i < j\leq n) \mid [b_i,b_j] = c_{ij}, b_i^p = c_{ij}^p = 1, c_{ij} \; 
\rm{central}\right\rangle, \]
the subgroup generated by all the $c_{ij}$ (which is $E'_n$) has the abelian presentation
$\mathrm{Ab}\left\langle c_{ij} \; (-n\leq i<j \leq n) \mid c_{ij}^p\right\rangle$. 

Let $E$ be the union of the ascending sequence of groups $E_1, E_2, \ldots$.  Then $E' = \cup_{i\in \N} E'_n$,
with presentation $\mathrm{Ab}\left\langle c_{ij} \; (i,j\in \Z, \; i<j) \mid c_{ij}^p\right\rangle$.
Our subgroup $H$ of $G$ is obtained from $E$ by quotienting out the subgroup 
$N:=\left\langle c_{0,j-i}c_{ij}^{-1} \mid i<j\right\rangle$
and setting $c_j = c_{0j}$ for all $j>0$.  The subgroup of $H$ generated by all the $c_j$ is isomorphic to $E'/N$,
and thus has abelian presentation 
\[ \mathrm{Ab}\left\langle c_j\; (j>0) \mid c_j^p\right\rangle. \]
In particular, all $c_j$ are non-trivial and so $H$ is not abelian, and therefore $G$ has derived length $3$.

Let $b = b_0$, $B = B_0$ and let $M_k$ be the sublanguage of
\[ W_k = (BA^*Ba^* bA^*ba^*)^k (BA^*ba^* bA^*Ba^*)^k \]
consisting of all those words 
\[ \begin{array}{l}
(BA^{m_1}Ba^{n_1} bA^{\mu_1}ba^{\nu_1})\ldots (BA^{m_k}Ba^{n_k} bA^{\mu_k}ba^{\nu_k})
(BA^{m_{k+1}}ba^{n_{k+1}} bA^{\mu_{k+1}}Ba^{\nu_{k+1}})\\ \ldots
(BA^{m_{2k}}ba^{n_{2k}} bA^{\mu_{2k}}Ba^{\nu_{2k}})
\end{array} \]
such that:
(i) $m_i = n_i = \mu_i = \nu_i$ for all $i$;
(ii) $m_i < m_{i+1}$ for $i\notin \{k,2k\}$.
The first condition can be checked by two pushdown automata, 
one checking that $m_i = n_i$ and $\mu_i = \nu_i$ for all $i$,
and the other checking that $m_i = \mu_i$ for all $i$.  The second condition can be checked by 
a single pushdown automaton.  Thus $M_k$ is $3$-$\cal{CF}$.

A word in $M_k$ is equal in $G$ to 
\[ [b, b_{m_1}]\cdots [b, b_{m_k}] [b, B_{m_{k+1}}] \cdots [b, B_{m_{2k}}] = 
c_{m_1}\cdots c_{m_k} (c_{m_{k+1}})^{-1}\ldots (c_{m_{2k}})^{-1},\]
with $m_i < m_{i+1}$ and $m_{k+i} < m_{k+i+1}$ for $1\leq i\leq k-1$.

Let $L_k = \Phi\left(W(G)\cap M_k\right)$.
As in the proof of Proposition~\ref{wreathp}, we can ignore the $b$'s and $B$'s
and take $L_k$ to be a subset of $\N_0^{8k}$.  Since the $c_{m_i}$ are 
distinct for $1\leq i\leq k$ and \[\left\langle c_j \mid j>0\right\rangle 
= \mathrm{Ab} \left\langle c_j\; (j>0) \mid c_j^p\; (j>0)\right\rangle,\]
the only way that a word in $M_k$ can be in $W(G)$ is if some $m_{k+j} = m_i$ for each $1\leq i\leq k$.
But since $m_i < m_{i+1}$ and $m_{k+i} < m_{k+i+1}$ for $1\leq i\leq k-1$, this implies that $m_i = m_{k+i}$
for $1\leq i\leq k$ and so $L_k$ is the set of all $8k$-tuples of the form
\[ (m_1,m_1,m_1,m_1,\ldots,m_k,m_k,m_k,m_k, m_1,m_1,m_1,m_1,\ldots,m_k,m_k,m_k,m_k),\]
with $m_i\in \N_0$, and $m_i < m_{i+1}$ for $1\leq i\leq k-1$.
Thus $L_k$ is a $k$-dimensional linear subset of the set $S^{(4,k)}$ introduced in Section~\ref{L(n,k) section},
and is therefore not an intersection of $k-1$ stratified semilinear sets, by Corollary~\ref{kS(n,k)}.  
By Corollary~\ref{polyParikh}, this means that $W(G)\cap M_k$ is not $(k-1)$-$\cal{CF}$.
Since $M_k$ is $3$-$\cal{CF}$, this implies that $W(G)$ is not $(k-4)$-$\cal{CF}$ for any 
$k\in \N$.  Hence $G$ is not poly-$\cal{CF}$.
\end{proof}

Quotienting out a proper subgroup of $\left\langle c_j \; (j>0)\right\rangle$ 
in the group $G$ in Proposition~\ref{abc} 
results in another group of derived length $3$
satisfying (ii) of Theorem~\ref{polycf-sol}.  We do not know how to show that such quotients are not poly-$\CF$
except in some very specific cases.

\textbf{Acknowledgements}
I am immensely grateful to my Ph.D. supervisor, Derek Holt, for many helpful and inspiring
discussions and suggestions.\\
This research was supported by a Vice Chancellor's Scholarship from the University of Warwick.

\end{document}